\newcommand{\blind}{1}
\def \R{\mathbb{R}}
\def \N{\mathbb{N}}
\def \Sum{\displaystyle\sum}
\def \E{\hbox{\it I\hskip -2pt E}}
\def \I{\hbox{\rm 1\hskip -3pt I}}
\renewcommand{\H}{\mathcal H}
\newcommand{\Hm}{H_{\min}}
\newcommand{\pen}{\mathrm{pen}}
\DeclareMathOperator*{\argmin}{arg\,min\,}
\theoremstyle{plain}
\newtheorem{theorem}{Theorem}[section]
\newtheorem{corollary}[theorem]{Corollary}
\newtheorem{lemma}[theorem]{Lemma}
\newtheorem{proposition}[theorem]{Proposition}
\theoremstyle{definition}
\theoremstyle{remark}
\newtheorem{remark}[theorem]{Remark}
\begin{document}

\def\spacingset#1{\renewcommand{\baselinestretch}%
{#1}\small\normalsize} \spacingset{1}


\if1\blind
{
  \title{\bf Numerical performance of Penalized Comparison to Overfitting  for multivariate kernel density estimation}
 \author{Suzanne Varet$^1$,
    Claire Lacour$^2$, 
  Pascal Massart$^1$,  
  Vincent Rivoirard$^3$\\  ~\\  
{\small  1.  LMO, Univ. Paris-Sud, CNRS, Universit\'e Paris-Saclay, 91405 Orsay, France,}\\
{\small   2. Universit\'e Paris-Est, LAMA (UMR 8050), 
UPEM, UPEC, CNRS, F-77454, Marne-la-Vall\'ee, France,}\\
 {\small   3. Ceremade, CNRS, Universit\'e Paris-Dauphine, PSL Research University, 75016 Paris, France }
}
  \maketitle
} \fi

\if0\blind
{
  \bigskip
  \bigskip
  \bigskip
  \begin{center}
    {\LARGE\bf Numerical performance of Penalized Comparison to Overfitting  for multivariate kernel density estimation}
    
\end{center}
  \medskip
} \fi

\bigskip
\begin{abstract}
Kernel density estimation is a well known method involving a smoothing parameter (the bandwidth) that needs to be tuned by the user. Although this method has been widely used the bandwidth selection remains a challenging issue in terms of balancing algorithmic performance and statistical relevance. 
The purpose of this paper is to compare a recently developped bandwidth selection method for kernel density estimation to those which are commonly used by now (at least those which are implemented in the R-package). 
This new method is called Penalized Comparison to Overfitting (PCO). It has been proposed by some of the authors of this paper in a previous work devoted to its statistical relevance from a purely theoretical perspective.  
It is compared here to other usual bandwidth selection methods for univariate and also multivariate kernel density estimation on the basis of intensive simulation studies.
In particular, cross-validation and plug-in criteria are numerically investigated and compared to PCO. 
The take home message is that PCO can outperform the classical methods without algorithmic additionnal cost.
\end{abstract}

\noindent%
{\it Keywords:}  Multivariate density estimation; kernel-based density estimation;
bandwidth selection
\vfill

\newpage
\section{Introduction}
\label{sec:intro}

Density estimation is widely used in a variety of fields in order to study the data and extract informations on variables whose distribution is unknown.
Due to  its simplicity of use and interpretation, kernel density estimation is one of the most commonly used density estimation procedure. Of course we do not pretend that it is "the" method to be used in any case but that being said, if one wants to use it in a proper way, one has to take into account  that its performance is conditioned by the choice of an adapted bandwidth. From a theoretical perspective, once some loss function is given an ideal bandwidth should minimize the loss (or the expectation of the loss) between the kernel density estimator and the unknown density function. Since these "oracle" choices do not make sense in practice, statistical bandwidth selection methods consist of mimiking the oracle through the minimization of some criteria that depend only on the data. Because it is easy to compute and to analyze, the $\mathbb L_2$ loss has been extensively studied in the literature although it would also make sense to consider the Kulback-Leibler loss, the Hellinger loss or the $\mathbb L_1$ loss (which are somehow more intrinsic losses). For the same reasons as those mentioned above, we shall deal with the $\mathbb L_2$ loss in this paper and all the comparisons that we shall make between the method that we propose and other methods will be performed relatively to this loss. Focusing on the $\mathbb L_2$  loss, two classes of bandwidth selection have been well studied and are commonly used:  cross validation and plug-in. They correspond to different ways of estimating the ISE (integrated squared error) which is just the square of the $\mathbb L_2$ loss between the estimator and the true density or the MISE (mean integrated squared error), which is just the expectation of the preceding quantity.
The least-square cross-validation (LSCV) \cite{Rudemo82, Bowman84} tends to minimize the ISE by replacing the occurence of the underlying density by the leave-one-out estimator. However LSCV suffers from the dispersion of the ISE even for large samples and tends to overfit the underlying density as the sample size increases. 
The plug-in approaches are based on the asymptotic expansion of the MISE. Since the asymptotic expansion of the MISE involves a bias term that depends on the underlying density itself one can estimate this term by plugging a \textit{pilot} kernel estimator of the true density. Thus this plug-in approach is a bit involved since it depends on the choice of a \textit{pilot} kernel and also on the choice of the \textit{pilot} bandwidth. The so-called "rule of thumb" method \cite{Silverman_1986} is a popular ready to be used variant of the plug-in approach in which the unknown term in the MISE is estimated as if the underlying density were Gaussian.
A survey of the different approaches and a numerical comparison can be found in \cite{Heidenreich2013}.

These methods have been first proposed and studied for univariate data and then extended to the multivariate case.
The LSCV estimator for instance has been adapted in \cite{Stone_1984} to the multivariate case. A multivariate version of the smooth cross-validation is presented in \cite{ChaconDuong_2011} and \cite{DuongHazelton_2005b}. The rule of thumb is studied in \cite{Wand92} and the multivariate extension of the plug-in is developed in \cite{WandJones_1994, ChaconDuong_2010}.
Generally speaking, these methods have some well known drawbacks: cross-validation tends to overfit the density for large sample while plug-in approaches depend on prior informations on the underlying density that are requested to estimate asymptotically the bias part of the MISE and which can turn to be inaccurate especially when the sample size is small.

The Penalised Comparison to Overfitting (PCO) is a selection method that has been recently developped in \cite{PCO_2016}. This approach is based on the penalization of the $\mathbb L_2$ loss between an estimator and the overfitting one. It does not belong to the family of cross-validation methods nor to the class of plug-in methods, but lies somewhere between these two classes. Indeed the main idea is still to mimic the oracle choice of the bandwidth but the required bias versus variance trade-off is achieved by estimating the variance with the help of a penalty term (as in a plug-in method) while the bias is estimated implicitly from the penalization procedure itself as in a cross-validation method. More precisely, the criterion to minimize is obtained from the bias-variance decomposition of the Mean Integrated Squared Error (MISE) leading to a penalty term which depends on some ultimate tuning parameter.
It is proved in \cite{PCO_2016} that asymptotically, the theoretical optimal value for this tuning parameter is 1 which makes it a fully ready to be used bandwidth selection method.

In this paper, we aim at completing the theoretical work performed in \cite{PCO_2016} by some simulation studies that help to understand wether PCO behaves as well as expected from theory. In particular we have in mind to check wether choosing the penalty constant as 1 as predicted by theory is still a good idea for small sample sizes. We also compare the numerical performances of PCO to some of the classical cross-validation and plug-in approaches. Of course, a large number of bandwidth selection methods have been proposed for kernel density estimation and some of them have in addition many variants. We do not pretend to provide a complete survey of kernel methodologies for density estimation but our purpose is to compare PCO to what we think are the most extensively used methods. As seen later on, PCO offers several advantages which should be welcome for practitioners:
\begin{enumerate}
\item It can be used for moderately high dimensional data
\item To a large extent, it is free-tuning
\item Its computational cost is quite reasonable
\end{enumerate}

Furthermore, PCO satisfies oracle inequalities, providing theoretical guarantees of our methodology. So, we naturally lead a numerical study to compare PCO with some popular methods sharing similar advantages. More precisely, we focus on the {\it Rule of thumb}, the {\it Least-Square Cross-Validation}, the {\it Biased Cross-Validation}, the {\it Smoothed Cross-Validation} and the {\it Sheather and Jones Plug-in approach}. These approaches are  the most extensively used ones and to the best of our knowledge, the only ones for which a complete and general \texttt{R}-package has been developed to deal with multivariate densities $f:\R^d\mapsto\R_+$, with $d\geq 3$. This package is \texttt{ks}. To the best of our knowledge, optimal theoretical properties of all these methods have not been proved yet. 

Concretely, the performance of each of the method that we analyze is measured in terms of the MISE of the corresponding selected kernel estimator (more precisely we use the Monte-Carlo evaluation of the MISE rather than the MISE per se). We present the results obtained for several "test" laws. We borrowed most of these laws from \cite{MarronWand_1992} for univariate data and \cite{Chacon_09} for bivariate data (and we use some natural extensions of them for multivariate data, up to dimension 4).  
In \cite{PCO_2016} the bandwidth matrix is supposed to be diagonal. Since in practice it is important to take into account correlations between variables, we extend their oracle inequality results to symmetric definite-positive bandwidth and we also numerically compare the so-designed PCO bandwidth selection to the classical approaches for non-diagonal bandwidth.

The section \ref{sec:all_meth} is devoted to the presentation of all the methods used for the numerical comparison. The numerical results for univariate data are detailed in \ref{sec:num1D} and in section \ref{sec:nummD} for multivariate data. The extension of the oracle inequality of PCO for non-diagonal bandwidth is given in section \ref{sec:PCO-extension} and the associated proofs are in section \ref{sec:proofs}.

\paragraph*{Notations:} The bold font denotes vectors.  For any matrix $A$, we denote $A^T$ the transpose of $A$. $Tr(A)$ denotes the trace of the matrix $A$.

\section{Bandwidth selection}\label{sec:all_meth}
Due to their simplicity and their smoothing properties, kernel rules are among the most extensively used methodologies to estimate an unknown density, denoted $f$ along this paper, where $f:\R^d\mapsto \R_+$. For this purpose, we consider an $n$-sample 
$\mathcal{X} = (\bm{X}_1, \hdots , \bm{X}_n)$ with $\bm{X}_i=(X_{i1},\hdots,X_{id}) \in \R^d$.
The kernel density estimator, $\hat{f}_H$, is given, for all $\bm{x}=(x_1,\hdots,x_d) \in \R^d$, by
\begin{equation}\label{KDEmD}
\hat{f}_H(\bm{x}) = \frac{1}{n {\rm det}(H)}\Sum_{i=1}^{n} K\left( H^{-1}(\bm{x}-\bm{X}_i) \right) = \frac{1}{n}\Sum_{i=1}^{n} K_H(\bm{x}-\bm{X}_i)
\end{equation}
where $K$ is the kernel function, the matrix $H$ is the kernel bandwidth belonging to a fixed grid $\mathcal H$ and $K_H(\bm{x}) = \frac{1}{{\rm det}(H)} K\left( H^{-1}\bm{x} \right).$ Of course, the choice of the bandwidth is essential from both theoretical and practical points of view. 
In the sequel, we assume that $K$ verifies following conditions, satisfied by usual kernels:
\begin{equation}\label{hyp-K}
\int K(\bm{x})d\bm{x}=1,\quad \|K\|^2<\infty,\quad \int \|x\|^2|K(\bm{x})|d\bm{x}<\infty,
\end{equation}
where $\|\cdot\|$ denotes the $\mathbb L_2$-norm on $\R^d$. 
Most of selection rules described subsequently are based on a criterion to be minimized. We restrict our attention to $\mathbb L_2$-criteria even if other approaches could be investigated. For this purpose, we introduce the {\it Integrated Square Error} (ISE) of the estimator $\hat{f}_H$ defined by
\begin{equation}\label{ISE1D}
ISE(H) := \|\hat{f}_H - f \|^2
\end{equation}
and the mean of $ISE(H)$: 
\begin{equation}\label{MISE1D}
MISE(H) := \E[ISE(H)] = \E\|\hat{f}_H - f \|^2.
\end{equation}
\subsection{Univariate case}\label{sec:1D}
We first deal with the case $d=1$ and we denote $\mathcal{X} = (X_1, \hdots , X_n)$ the $n$-sample of density $f$. The general case is investigated in Section \ref{sec:mD}. The bandwidth parameter lies in $\R_+^*$ and is denoted $h$, instead of $H$. In our $\mathbb L_2$-perspective, it is natural to use a bandwidth which minimizes $h\mapsto MISE(h)$ or $h\mapsto ISE(h)$. However, such functions strongly depend on $f$. 
We can relax this dependence by using an asymptotic expansion of the MISE:
\begin{equation}\label{AMISE}
AMISE(h) = \frac{\|K\|^2}{nh} + \frac{1}{4}h^4\mu^2_2(K)\|f''\|^2,
\end{equation}
when $f''$ exists and is square-integrable, with $\mu_2(K)=\int x^2K(x)dx$. We refer the reader to 
\cite{WandJones94Book} 
who specified mild conditions under which $AMISE(h)$ is close to $MISE(h)$ when $n\to +\infty$. The main advantage of the AMISE criterion lies in the closed form of the bandwidth that minimizes it:
\begin{equation}\label{hAMISE}
\hat{h}_{\textrm{AMISE}} = \left( \frac{\|K\|^2}{\mu^2_2(K)\|f''\|^2}\right)^{1/5}n^{-1/5}.
\end{equation}
Note however, that $\hat{h}_{\textrm{AMISE}}$ still depends on $f$ through $\|f''\|^2$. The Rule of Thumb developed in \cite{Parzen62} and popularized by \cite{Silverman_1986} (and presented subsequently) circumvents this problem. Cross-validation approaches based on a direct estimation of $ISE(h)$ constitute an alternative to bandwidth selection derived from the AMISE criterion. Both approaches can of course be combined. Before describing them precisely, we first present the PCO methodology for the univariate case.
\subsubsection{Penalized Comparison to Overfitting (PCO)}\label{PCO-1d}
{\it Penalized Comparison to Overfitting} (PCO) has been proposed by Lacour, Massart and Rivoirard  \cite{PCO_2016}. We recall main heuristic arguments of this method for the sake of completeness.  We start from the classical bias-variance decomposition
$$\E\|f-\hat f_h\|^2=\|f- f_h\|^2+\E\|f_h-\hat f_h\|^2=:b_h+v_h,$$
where for any $h$, $f_h:=K_h\star f=\E[\hat f_h]$, with $\star$ the convolution product. It is natural to consider a criterion to be minimized of the form 
$${\rm Crit}(h)=\hat b_h + \hat v_h,$$
where $\hat b_h$ is an estimator of the bias $b_h$ and $\hat v_h$ an estimator of the variance $v_h$. Minimizing such a criterion is hopefully equivalent to minimizing the MISE. Using that 
$v_h$ is (tightly) bounded by $\|K\|^2/(nh)$, 
we naturally set
$\hat v_h=\lambda \|K\|^2/(nh)$, with $\lambda$ some tuning parameter.
The difficulty lies in estimating the bias. Here we assume that $h_{min}$, the minimum of the bandwidths grid, is very small. In this case, 
$f_{h_{min}}= K_{h_{min}}\star f$ is a good approximation of $f$, so that  $\|f_{h_{min}}-f_h\|^2$ is close to $b_h$. This is tempting to estimate this term by $\|\hat f_{h_{min}}- \hat f_h\|^2$ but doing so, we introduce a bias. Indeed, since 
$$\hat f_{h_{min}}- \hat f_h=(\hat f_{h_{min}}- f_{h_{min}}- \hat f_h+f_h)+(f_{h_{min}}- f_h)$$
we have the decomposition
\begin{equation}\label{decomp}
\E\|\hat f_{h_{min}}- \hat f_h\|^2=\|f_{h_{min}}- f_h\|^2+\E\|\hat f_{h_{min}}-\hat f_h-f_{h_{min}}+ f_h\|^2.
\end{equation}
But the centered variable $\hat f_{h_{min}}-\hat f_h-f_{h_{min}}+ f_h$ can be written
$$\hat f_{h_{min}}-\hat f_h-f_{h_{min}}+ f_h
=\frac{1}{n}\sum_{i=1}^n(K_{h_{min}}-K_h)(.-X_i)-\E((K_{h_{min}}-K_h)(.-X_i)).$$ So, the second term in the right hand side of \eqref{decomp} is of order
$ {n}^{-1}\int(K_{h_{min}}(x)-K_h(x))^2dx.$
Hence,
$$\E\|\hat f_{h_{min}}- \hat f_h\|^2\approx\|f_{h_{min}}- f_h\|^2+\frac{\|K_{h_{min}}-K_h\|^2}n$$
and then 
$$b_h\approx \|f_{h_{min}}-f_h\|^2\approx \|\hat f_{h_{min}}- \hat f_h\|^2-\frac{\|K_{h_{min}}-K_h\|^2}n.$$
These heuristic arguments lead to the following criterion to be minimized:
\begin{equation}\label{critworse}
l_{\rm PCO}(h)=\|\hat f_{h_{min}}- \hat f_h\|^2-\frac{\|K_{h_{min}}-K_h\|^2}n + \lambda \frac{\|K_h\|^2}n.
\end{equation}
Thus, our method consists in comparing every estimator of our collection to the overfitting one, namely $\hat f_{h_{min}},$ before adding the penalty term
\begin{equation}\label{pen}
\pen_\lambda(h)=\frac{\lambda\|K_h\|^2-\|K_{h_{min}}-K_h\|^2}{n}.
\end{equation}
%
%
%
The selected bandwidth is then
\begin{equation}
\hat{h}_{\rm PCO} = \argmin_{h \in \mathcal{H}} l_{\rm PCO}(h).
\end{equation}
In \cite{PCO_2016}, it is shown that the risk blows up when $\lambda<0$. So, the optimal value for $\lambda$ lies in $\mathbb R_+$. Theorem 2 of \cite{PCO_2016} (generalized in Theorem~\ref{io3} of Section \ref{sec:PCO-extension}) suggests that the optimal tuning parameter is $\lambda = 1$. It is also suggested by previous heuristic arguments (see the upper bound of $v_h$). In Section~\ref{sec:num1D}, we first conduct some numerical experiments and establish that PCO is indeed optimal for $\lambda=1$. We then fix $\lambda=1$ for all comparisons so PCO becomes a free-tuning methodology. 

Connections between PCO and the approach proposed by Goldenshluger and Lepski are quite strong. Introduced in \cite{GL08}, the Goldenshluger-Lepski's methodology is a variation of the Lepski's procedure still based on pair-by-pair comparisons between estimators. More precisely, Goldenshluger and Lepski suggest to use the selection rule
$$\hat h=\argmin_{h\in\H}\left\{A(h)+V_2(h)\right\},$$
with $$A(h)=\sup_{h'\in \H}\left\{\|\hat f_{h'}- \hat f_{h\vee h'}\|^2-V_1(h')\right\}_+,$$
where $x_+$ denotes the positive part $\max(x,0)$, ${h \vee h'}=\max(h,h')$ and 
$V_1(\cdot)$ and $V_2(\cdot)$ are penalties to be suitably chosen (Goldenshluger and Lepski essentially consider $V_2=V_1$ or $V_2=2V_1$ in \cite{GL08,GL09,GL10,GL13}). 
The authors establish the minimax optimality of their method when $V_1$ and $V_2$ are large enough. However, observe that if $V_1=0$, then, under mild assumptions, 
$$A(h)=\sup_{h'\in \H}\|\hat f_{h'}- \hat f_{h\vee h'}\|^2\approx \|\hat f_{h_{min}}- \hat f_{h}\|^2$$
so that our method turns out to be exactly some degenerate case of the Goldenshluger-Lespki's method. Two difficulties arise for the use of the Goldenshluger-Lespki's method: Functions $V_1$ and $V_2$ depend on some parameters which are very hard to tune. Based on 2 optimization steps, its computational cost is very large. Furthermore, the larger the dimension, the more accurate these problems are. Note that the classical Lepski method shares same issues.

\subsubsection{Silverman's Rule of thumb (RoT and RoT0)}
The Rule of Thumb has been developed in \cite{Parzen62} and popularized by \cite{Silverman_1986}.
We assume that $f''$ exists and is such that $\|f''\|<\infty$. The simplest way to choose $h$ is to use a standard family of distributions to minimize $h\mapsto AMISE(h)$.

For a Gaussian kernel and $f$ the probability density function of the normal distribution, an approximation of $\|f''\|^2$ can be plugged in \eqref{hAMISE} leading to a bandwidth of the form $\hat{h} = 1.06\hat{\sigma}n^{-1/5}$ where $\hat{\sigma}$ is an estimation of the standard deviation of the data. However this bandwidth leads to an oversmoothed estimator of the density for multimodal distributions. Thus it is better to use the following estimator, which works well with unimodal densities and not too badly for moderate bimodal ones:
\begin{equation}\label{nrd1D}
\hat{h}_{\rm RoT} = 1.06\times \min \left( \hat{\sigma}, \frac{\hat{q}_{75} - \hat{q}_{25} }{1.34} \right)\times n^{-1/5}
\end{equation}
where $\hat{q}_{75} - \hat{q}_{25}$ is an estimation of the interquartile range of the data.
Another variant of this approximation (\cite{Silverman_1986} p.45-47) is:
\begin{equation}\label{nrd01D}
\hat{h}_{\rm RoT0} = 0.9\times \min \left( \hat{\sigma}, \frac{\hat{q}_{75} - \hat{q}_{25} }{1.34} \right)\times n^{-1/5}.
\end{equation}
These two variants of the Rule of Thumb methodology are respectively denoted RoT and RoT0.
\subsubsection{Least-Square Cross-Validation (UCV)}
Least-square cross-validation has been developed indepependently in \cite{Rudemo82} and \cite{Bowman84}. In the univariate case, Equation \eqref{ISE1D} can be expressed as
\begin{equation} \label{ISE2}
ISE(h)= \int \hat{f}_h^{2}  - 2\int \hat{f}_h  f + \int f^{2}.
\end{equation}
Since the last term of \eqref{ISE2} does not depend on $h$, minimizing \eqref{ISE2} is equivalent to minimizing
\begin{equation}
Q(h) = \int \hat{f}_h^{2}  - 2\int \hat{f}_h  f.
\end{equation}
The least-square cross-validation constructs an estimator of $Q(h)$ from the leave-one out estimator $\hat{f}_{-i}$:
\begin{equation}\label{Llscv0}
l_{\rm ucv0}(h) = \int \hat{f}_h^2 -\frac{2}{n}\Sum_{i=1}^n\hat{f}_{-i}(X_i)
\end{equation}
where the leave-one out estimator is given by
\begin{equation}\label{LOO}
\hat{f}_{-i}(x) = \frac{1}{n-1}\Sum_{j \neq i} K_h\left(x-X_j\right).
\end{equation}
Then, $\E[Q(h)]$ is unbiasedly estimated by $l_{\rm ucv0}(h)$, which justifies the use of $l_{\rm ucv0}$ for the bandwidth selection and this is the reason why this estimator is also called unbiased cross-validation (UCV) estimator.
Using the expression of $\hat{f}_{-i}$ in \eqref{Llscv0} and replacing the factor $\frac{1}{n-1}$ with $\frac{1}{n}$ for computation ease, the following estimator $l_{\rm ucv}(h)$ is used in practice:
\begin{equation}
l_{\rm ucv}(h) = \frac{1}{hn^2}\Sum_{i=1}^n\Sum_{j=1}^n K^*\left(\frac{X_i - X_j}{h}\right)
+ \frac{2}{nh}K(0)
\end{equation}
where $K^*(u) = (K\star\tilde K)(u) -2K(u)$, with the symbol '$\star$' used for the convolution product and $\tilde K(u)=K(-u)$.
Finally, the bandwidth selected by the least-square cross-validation is given by:
\begin{equation}
\hat{h}_{\rm ucv}  = \argmin_{h\in \mathcal H} l_{\rm ucv}(h).
\end{equation}
\subsubsection{Biased Cross-Validation (BCV)}
The biased cross-validation was developed in \cite{ScottTerrell87}.
It consists in minimizing the AMISE. So, we assume that $f''$ exists and $\|f''\|<\infty$.
Since the AMISE depends on the unknown density $f$ through $\|f''\|$, the biased cross-validation estimates $\|f''\|^2$ by
\begin{equation}
\widehat{\|f''\|^2} = \frac{1}{n(n-1)}\Sum_{i=1}^{n}\Sum_{\substack{j=1\\j\neq i}}^{n}
(\tilde K''_h \star K''_h)(X_i - X_j).
\end{equation}
Straightforward computations show that 
$$\E\left[\widehat{\|f''\|^2}\right] =\|K_h\star f''\|^2,$$
which is close to $\| f''\|^2$ when $h$ is small under mild conditions on $K$. This justifies the use of the objective function of BCV defined by:
\begin{equation}
l_{\rm bcv}(h) = \frac{\|K\|^2}{nh} + \frac{1}{4}h^4\mu_{2}^{2}(K)\widehat{\|f''\|^2}.
\end{equation}
Finally, the bandwidth selected by the BCV is given by:
\begin{align}
\hat{h}_{\rm bcv} & = \argmin_{h} l_{\rm bcv}(h)\\
& = \left( \frac{\|K\|^2}{\mu^2_2(K)\widehat{\|f''\|^2}}\right)^{1/5}n^{-1/5}.
\end{align}
\subsubsection{Sheather and Jones Plug-in (SJ)}\label{sec:SJ}
This estimator is, as BCV, based on the minimization of the AMISE.
The difference with the BCV approach is in the estimation of $\| f''\|^2$.
 In this plug-in approach, $\| f''\|^2$ is estimated by the empirical mean 
of the fourth derivative of $f$, where $f$ is replaced by a \textit{pilot} kernel density estimate of $f$. Indeed, using two integrations by parts, under mild assumptions on $f$, we have 
$\E[f^{(4)}(X)]=\| f''\|^2.$
The \textit{pilot} kernel density estimate is defined by:
\begin{equation}
\hat{f}_{{\rm pilot}, b}(x) = \frac{1}{n}\Sum_{j=1}^{n} L_b(x-X_j)
\end{equation}
where $L$ is the pilot kernel function and $b$ the pilot bandwidth. Then,
$\|f''\|^2$ is estimated by $\hat{S}(b)$ with
\begin{equation}\label{diag_in}
\hat{S}(\alpha) = \frac{1}{n(n-1)\alpha^5}\Sum_{i=1}^{n}\Sum_{j=1}^{n}L^{\rm (4)}\left(\frac{X_i-X_j}{\alpha}\right).
\end{equation}
The pilot bandwidth $b$ is chosen in order to compensate the bias introduced by the diagonal term $i=j$ in \eqref{diag_in} as explained in Section~3 of \cite{SheatherJones_1991}. Thus, for choosing $b$, Sheather and Jones propose two algorithms based on the remark that, in this context, the pilot bandwidth $b$ can be written as a quantity proportional to $h^{5/7}$ or proportional to $n^{-1/7}$.
The first algorithm, called '{\it solve the equation}' ('ste'), consists in taking the expression $b = b(h) \propto h^{5/7}$, pluging $\hat{S}(b(h))$ in \eqref{hAMISE} and solving the equation.
The second algorithm, 'direct plug-in', consists in taking $b \propto n^{-1/7}$, and pluging $\hat{S}(b)$ in \eqref{hAMISE}. 
Thus the SJ estimators of $h$ are given by:
\begin{equation}
\hat{h}_{SJste} = \left( \frac{\|K\|^2}{\mu^2_2(K) \hat{S}(c_1\hat{h}_{SJste}^{5/7})}\right)^{1/5}n^{-1/5}
\end{equation}
for the 'ste' algorithm and 
\begin{equation}
\hat{h}_{SJdpi} = \left( \frac{\|K\|^2}{\mu^2_2(K) \hat{S}(c_2n^{-1/7})}\right)^{1/5}n^{-1/5}
\end{equation}
for the 'dpi' algorithm. 
The constant $c_1$ is 
$c_1=\left( \frac{2L^{(4)}(0)\mu_2^2(K)}{\mu_{2}(L) \|K\|^2}\right)^{1/7}\left(\frac{\| f''\|^2}{\| f'''\|^2}\right)^{1/7}$ where $\| f''\|^2$ and $\| f'''\|^2$ are estimated by $\| \widehat{f''_{a}}\|^2$ and $\| \widehat{f'''_{b}}\|^2$ with $a$ and $b$ the Silverman's rule of thumb bandwidths respectively. The constant $c_2$ is equal to $\left( \frac{2L^{(4)}(0)}{\mu_{2}(L) }\right)^{1/7}\left(\frac{1}{\| f'''\|^2}\right)^{1/7}$ (see Equation (9) of \cite{SheatherJones_1991}), where $\| f'''\|^2$ is estimated by $\| \widehat{f'''_{a}}\|^2$ with $a$ the Silverman's rule of thumb bandwidth.

%
%
%
%

\subsection{Multivariate case}\label{sec:mD}
The difficulty of the multivariate case lies in the selection of a matrix rather than a scalar bandwidth. Different classes of matrices can be used.
The simplest class corresponds to matrices of the form $hI_d$ for $h\in\R_+^*$.
In this case, selecting the bandwidth matrix is equivalent to deriving a single smoothing parameter. However, the unknown distribution may have different behaviors according to the coordinate direction. The latter parametrization does not allow to take this specificity into account. An extension of this class corresponds to diagonal matrices of the form ${\rm diag}(h_1,\hdots,h_d)$. But this parametrization is not convenient when the directions of the density are not those of the coordinates.
The most general case corresponds to the class of all symmetric definite positive matrices, which allows smoothing in arbitrary directions.
A comparison of these parametrizations can be found in \cite{WandJones93}.
In this paper, we focus on diagonal and on symmetric definite positive matrices parametrization.

We now assume that the kernel $K:\R^d\mapsto\R$ satisfies $$ \int \bm{x}\bm{x}^TK(\bm{x})d\bm{x}=\mu_{2}(K)I_d$$
where $\mu_{2}(K)$ is a finite positive constant.
In the general setting of symmetric definite positive matrices, and using the asymptotic expansion of the bias and the variance terms, 
the MISE is usually approximated by the AMISE function defined by 
$$AMISE(H)=\frac{1}{4}\mu_{2}^{2}(K)\int [Tr(H^2D^2f(\bm{x}))]^2d\bm{x}  + \frac{\|K\|^2}{n{\rm det}(H)}$$
with $D^2f(\bm{x})$ the Hessian matrix of $f$. See \cite{Wand92} for instance. Note that $AMISE(H)$
 can also be expressed as
\begin{equation}\label{AMISEmD}
AMISE(H) = \frac{1}{4}\mu_{2}^{2}(K)({\rm vech}(H^2))^T\Psi_f ({\rm vech}(H^2)) + \frac{\|K\|^2}{n{\rm det}(H)},
\end{equation}
where 
${\rm vech}$ is the vector half operator which transforms the lower triangular half of a matrix into a vector scanning column-wise and the matrix 
$\Psi_f$ is defined by
\begin{equation}\label{eq:psi_f}
\Psi_f = \int {\rm vech}(2D^2f(\bm{x}) - {\rm diag}(D^2f(\bm{x}))) ({\rm vech} (2D^2f(\bm{x}) - {\rm diag}(D^2f(\bm{x}))))^T
\end{equation}
with $D^2f(\bm{x})$ the Hessian matrix of $f$ and ${\rm diag}(A)$ the diagonal matrix formed with the diagonal elements of A.

\subsubsection{Penalized Comparison to Overfitting (PCO)}\label{sec:PCO-extension}
The PCO methodology developed in  \cite{PCO_2016} only deals with diagonal bandwidths $H$. We now generalize it to the more general case of  symmetric positive-definite $d\times d$ matrices to compare its numerical performances to all popular methods dealing with multivariate densities. We then establish theoretical properties of PCO in oracle and minimax approaches. To the best of our knowledge, similar results have not been established for competitors of PCO.
 
In the sequel, we consider $\H$, a finite set of symmetric positive-definite $d\times d$ matrices. Let $\bar h$ be a positive lower bound of all eigenvalues of any matrix of $\H$. Then, set  $\Hm=\bar h I_d$.  We still consider $$\hat H_{PCO}=\arg\min_{H\in\H} l_{PCO}(H)$$ with 
$$l_{PCO}(H)=\|\hat f_{\Hm}- \hat f_H\|^2-\frac{\|K_{\Hm}-K_H\|^2}n + \lambda \frac{\|K_H\|^2}n$$
and $\lambda>0.$ Define
$$f_H=\E[\hat f_H]=K_H\star f,$$ 
which goes to $f$ when $H$ goes to $\bm{0}_d$, under mild assumptions. The estimator $\hat f_{\hat H_{PCO}}$ verifies the following oracle inequality.
\begin{theorem}\label{io3}
Assume that $\|f\|_\infty<\infty$ and $K$ is symmetric. Assume that $\det( \Hm)\geq \|K\|_\infty\|K\|_1/n$. 
 Let $x\geq 1$ and $\varepsilon\in (0,1)$. If 
$\lambda>0,$
then, with probability larger than $1-C_1|\H|e^{-x}$,
\begin{eqnarray*}
\| \hat  f_{\hat H_{PCO}}-f\|^2 &\leq &C_0(\varepsilon,\lambda)\min_{H\in\H}\|\hat f_H-f\|^2\\&&+C_2(\varepsilon, \lambda)\|f_{\Hm}-f\|^2
+C_3(\varepsilon,K,\lambda)\left(\frac{\|f\|_{\infty}x^2}{n}+\frac{x^3}{n^2\det(\Hm)}\right),
\end{eqnarray*}
where $C_1$ is an absolute constant and 
$C_0(\varepsilon,\lambda)=
\lambda+\varepsilon$ { if } $\lambda\geq 1$, 
$C_0(\varepsilon,\lambda)=1/\lambda+\varepsilon$  { if } $0<\lambda< 1$. The constant $C_2(\varepsilon, \lambda)$ only depends on $\varepsilon$ and $\lambda$ and $C_3(\varepsilon,K,\lambda)$ only depends on $\varepsilon$, $K$ and $\lambda$.
\end{theorem}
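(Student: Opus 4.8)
The plan is to follow the strategy of the original PCO analysis (Theorem~2 of \cite{PCO_2016}), adapting each step to the matrix-valued bandwidth setting. First I would rewrite the criterion $l_{PCO}(H)$ in a form that exposes how it approximates the MISE. Using the polarization identity $\|\hat f_{\Hm}-\hat f_H\|^2 = \|\hat f_{\Hm}-f\|^2 + \|\hat f_H-f\|^2 - 2\langle \hat f_{\Hm}-f,\hat f_H-f\rangle$, and writing $\hat f_H - f_H = \frac1n\sum_i (K_H(\cdot-\bm X_i) - \E[K_H(\cdot-\bm X_i)])$ as a centered empirical process, I would decompose $l_{PCO}(H) = \|\hat f_H - f\|^2 - \|\hat f_{\Hm}-f\|^2 + R(H)$ where the remainder $R(H)$ collects (i) the ``doubled'' linear term $-2\langle \hat f_{\Hm}-f_{\Hm}, f_H-f\rangle$ and $-2\langle f_{\Hm}-f, \hat f_H - f_H\rangle$, (ii) a U-statistic-type term $-2\langle \hat f_{\Hm}-f_{\Hm}, \hat f_H-f_H\rangle$ coming from the cross term, plus the deterministic piece $2\langle f_{\Hm}-f, f_H - f\rangle$, and (iii) the penalty-versus-variance discrepancy, namely $\lambda\|K_H\|^2/n - \|K_{\Hm}-K_H\|^2/n$ minus the actual quantities $\E\|\hat f_H - f_H\|^2$ and $\E\|\hat f_{\Hm}-f_{\Hm}\|^2$. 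The point of the penalty $\pen_\lambda(H)$ is precisely that, in expectation, $R(H)$ is (up to the $\|f_{\Hm}-f\|^2$ bias) of the order $(\lambda-1)v_H$ plus negligible terms, so that $l_{PCO}(H) + \|\hat f_{\Hm}-f\|^2 \approx \|\hat f_H-f\|^2 \cdot$ (something close to the right constant).

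Next I would control the stochastic fluctuations of each piece of $R(H)$ uniformly over $H\in\H$. The linear terms $\langle \hat f_H - f_H, g\rangle$ for fixed $g$ (here $g = f_{\Hm}-f$ or $g=f_H-f$) are sums of i.i.d.\ bounded-variance terms; a Bernstein inequality gives deviations of order $\sqrt{x\,v_H\,\|g\|^2}+ x\|K_H\|_\infty \|g\|/n$, which after Young's inequality $2ab\le \eta a^2 + \eta^{-1}b^2$ are absorbed into $\varepsilon$-fractions of $\|\hat f_H-f\|^2$ and $\|f_{\Hm}-f\|^2$ and into the residual $\|f\|_\infty x^2/n$ term. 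The genuinely delicate term is the degenerate U-statistic $U(H) := \langle \hat f_{\Hm}-f_{\Hm},\hat f_H-f_H\rangle = \frac1{n^2}\sum_{i\ne j}\langle K_{\Hm}(\cdot-\bm X_i)-f_{\Hm}, K_H(\cdot-\bm X_j)-f_H\rangle + (\text{diagonal})$. For this I would invoke the Houdr\'e--Reynaud / Gin\'e--Latała--Zinn type concentration inequality for degenerate U-statistics of order two (exactly as in \cite{PCO_2016}), which produces the four deviation terms governed by the Frobenius-type norms $A,B,C,D$ of the associated kernel; tracking these in the matrix setting yields the $x^2/n$ and $x^3/(n^2\det(\Hm))$ contributions, using $\|K_H\|^2 \asymp \|K\|^2/\det(H)$ and $\|K_H\|_\infty \asymp \|K\|_\infty/\det(H)$, together with the hypothesis $\det(\Hm)\ge \|K\|_\infty\|K\|_1/n$ to keep $\det(\Hm)$ bounded below and make the diagonal term negligible.

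Finally I would assemble the pieces. By definition of $\hat H_{PCO}$, for every fixed $H\in\H$ we have $l_{PCO}(\hat H_{PCO}) \le l_{PCO}(H)$; substituting the decomposition $l_{PCO}(H) = \|\hat f_H-f\|^2 - \|\hat f_{\Hm}-f\|^2 + R(H)$ on both sides and cancelling $\|\hat f_{\Hm}-f\|^2$ gives
\[
\|\hat f_{\hat H_{PCO}}-f\|^2 \le \|\hat f_H - f\|^2 + R(H) - R(\hat H_{PCO}).
\]
On the event of probability $1-C_1|\H|e^{-x}$ where all the Bernstein/U-statistic bounds hold simultaneously (union bound over $\H$, hence the factor $|\H|$), $R(H)$ is bounded by $(\lambda-1)_+ v_H + \varepsilon\|\hat f_H-f\|^2 + \varepsilon\|f_{\Hm}-f\|^2 + (\text{residuals})$ and $-R(\hat H_{PCO})$ is bounded by $(1-\lambda)_+ \cdot(\text{stuff}) + \varepsilon\|\hat f_{\hat H_{PCO}}-f\|^2 + \dots$; crucially the coefficient of the variance term that survives is $\max(\lambda,1/\lambda)$, which is the source of the dichotomy in $C_0(\varepsilon,\lambda)$. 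Bounding $v_H \le \|K\|^2/(n\det(H))$ and $v_H \lesssim \|\hat f_H - f\|^2 + (\text{bias})$ lets us fold the variance back into the oracle term $\min_{H}\|\hat f_H-f\|^2$; rearranging (moving the $\varepsilon\|\hat f_{\hat H_{PCO}}-f\|^2$ to the left and dividing by $1-\varepsilon$) yields the stated inequality with $C_0(\varepsilon,\lambda) = \lambda+\varepsilon$ for $\lambda\ge 1$ and $1/\lambda+\varepsilon$ for $\lambda<1$, and with $C_2,C_3$ depending only on the indicated parameters. I expect the main obstacle to be the bookkeeping in the degenerate U-statistic bound: getting the correct powers of $n$ and $\det(\Hm)$ in the four terms $A,B,C,D$ when $H$ ranges over non-diagonal symmetric positive-definite matrices, and in particular checking that $\|K_H\star K_{\Hm}\|_\infty$ and related mixed quantities still scale the way the diagonal case predicts — this is where the symmetry of $K$ and the normalization $\int \bm x\bm x^T K = \mu_2(K)I_d$ are used, and where most of the routine-but-careful computation lives.
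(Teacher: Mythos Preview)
Your proposal is correct and follows essentially the same route as the paper: the starting identity you obtain from polarization plus the minimizer property is exactly the paper's inequality \eqref{idealpenal}, the decomposition of the cross term $\langle \hat f_H-f,\hat f_{\Hm}-f\rangle$ into $\langle K_H,K_{\Hm}\rangle/n$, a degenerate $U$-statistic, linear terms $V(H,\Hm)$, and a deterministic bias product matches the paper's \eqref{maint}--\eqref{conct}, and the concentration tools (Bernstein for the linear pieces, Houdr\'e--Reynaud for the $U$-statistic) are the same, with Proposition~\ref{mino} used to fold the variance proxy $\|K_H\|^2/(n\det H)$ back into $\|\hat f_H-f\|^2$. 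One small inaccuracy: the second-moment normalization $\int \bm{x}\bm{x}^T K=\mu_2(K)I_d$ plays no role here --- the identities $\|K_H\|^2=\|K\|^2/\det(H)$ and $\|K_H\|_\infty=\|K\|_\infty/\det(H)$ are exact for any invertible $H$ by change of variables, so the passage from diagonal to full matrices in the $U$-statistic bounds is less delicate than you anticipate.
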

The proof of Theorem~\ref{io3} is given in Section~\ref{sec:proofs}. Up to the constant $C_0(\varepsilon,\lambda)$, the first term of the oracle inequality corresponds to the ISE of the best estimate $\hat f_H$ when $H$ describes $\H$.  The main assumption  of the theorem means that $\bar h$ cannot be smaller than $n^{-1/d}$ up to a constant. When $\bar h$ is taken proportional to $n^{-1/d}$, then the third term is of order $x^3/n$ and is negligible with $x$ proportional to $\log n$. The second term is also negligible when $f$ is smooth enough and $\bar h$ small (see Corollary~\ref{corovitesse} below). 
\begin{remark}\label{choix-lambda}
Note that $\argmin_{\lambda\in\R_+^*}C_0(\varepsilon,\lambda)=1$ and $C_0(\varepsilon,1)=1+\varepsilon.$ So, taking $\lambda=1$ ensures that the leading constant of the main term of the right hand side is close to 1 when $\varepsilon$ is small. Neglecting the other terms, this oracle inequality shows that the risk of PCO tuned with $\lambda=1$ is not worse than the risk of the best estimate $\hat f_H$ up to the constant $1+\varepsilon$, for any $\varepsilon>0$. Fixing $\lambda=1$ as for the univariate case makes PCO a free-tuning methodology. 
\end{remark}
From  Theorem~\ref{io3}, we deduce that if $\H$ is not too big and contains a quasi-optimal bandwidth, we can control the MISE of PCO on the Nikol'skii class of functions  by assuming that $K$ has enough vanishing moments.
The anisotropic Nikol'skii class is  a smoothness space in $\mathbb L_p$ and it is a specific case of the anisotropic Besov class, often used in nonparametric kernel estimation framework
 (for a precise definition, see \cite{GL14}). 
To deduce a rate of convergence, we focus on a parametrization of the form $H=P^{-1}\mathrm{diag}(h_1,\ldots,h_d)P$ with $P$ some given matrix. The practical choice of $P$ is discussed in Section~\ref{sec:nummD}.
The following result is a generalization of Corollary 7 of  \cite{PCO_2016}. We set $\mathbb N^*$ the set of positive integers. We say that a kernel $K$ is of order $\ell$ if for any non-constant polynomial $Q$ of degree smaller than $\ell$,
$$\int K({\bf u})Q({\bf u})d{\bf u}=0.$$
\begin{corollary} \label{corovitesse}
Let $P$ an orthogonal matrix. 
Consider $ \Hm=\bar hI_d$ with $\bar h^d= {\|K\|_\infty\|K\|_1}/{n}$ and choose 
for $\H$ the following set of bandwidths: 
$$\H=\left\{H=P^{-1}
\mathrm{diag}(h_1,\ldots,h_d)P: \ \prod_{j=1}^d h_j\geq \bar h^d  \text{ and } h_j^{-1} \in\mathbb N^*\ \forall\,j=1,\ldots,d .
 \right\}$$
Assume that $f\circ P^{-1}$ belongs to the anisotropic Nikol'skii class $\mathcal{N}_{{\bf 2},d}(\boldsymbol{\beta},{\bf L})$. Assume that the kernel $K$ is order $\ell>\max_{j=1,\ldots,d}\beta_j$. Then, if $f$ is bounded by a constant $B>0$, 
\begin{align*}
\E\left[\|\hat  f_{\hat H_{PCO}}-f\|^2\right]\leq M\left(\prod_{j=1}^dL_j^{\frac{1}{\beta_j}}\right)^{\frac{2\bar\beta}{2\bar\beta+1}}n^{-\frac{2\bar\beta}{2\bar\beta+1}},
\end{align*}
where $M$ is a constant only depending on $\boldsymbol{\beta},$ $K,$ $B$, $d$ and $\lambda$ and
${\bar\beta}=(\sum_{j=1}^d {1}/{\beta_j})^{-1}.$
\end{corollary}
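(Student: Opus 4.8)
\medskip
\noindent\emph{Sketch of proof.} The plan is to feed the oracle inequality of Theorem~\ref{io3} with a classical bias--variance control of $MISE(H)$ over the anisotropic Nikol'skii class, estimating the bias after the orthogonal change of variables $\bm{y}\mapsto P\bm{y}$, which turns $f$ into $g:=f\circ P^{-1}$ and every matrix $H=P^{-1}\mathrm{diag}(h_1,\ldots,h_d)P$ of $\H$ into the diagonal matrix $D=\mathrm{diag}(h_1,\ldots,h_d)$, while leaving the $\mathbb L_2$-norms (and the kernel, up to replacing $K$ by $\tilde K:=K\circ P^{-1}$, which still satisfies \eqref{hyp-K} with the same constants and is still of order $\ell$ since $P$ is orthogonal) unchanged.

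First I would pass from the probability statement of Theorem~\ref{io3} to a bound in expectation. I apply the theorem with a fixed $\varepsilon$ and with $x=x_n:=(d+2)\log n$. On the event of probability at least $1-C_1|\H|e^{-x_n}$ the oracle inequality holds; on its complement I use the crude deterministic bound $\|\hat f_H\|^2\le\|\hat f_H\|_\infty\|\hat f_H\|_1\le\|K\|_\infty\|K\|_1/\det(H)\le n$, valid for every $H\in\H$ because $\det(H)=\prod_j h_j\ge\bar h^d=\|K\|_\infty\|K\|_1/n$, so that $\|\hat f_{\hat H_{PCO}}-f\|^2\le 2n+2\|f\|_\infty$. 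Since $|\H|\le(n/(\|K\|_\infty\|K\|_1))^d$, this bad event contributes a term of order $n^{d+1}e^{-x_n}\asymp n^{-1}$, which is $o(n^{-2\bar\beta/(2\bar\beta+1)})$; likewise the deterministic remainder $C_3(\|f\|_\infty x_n^2/n+x_n^3/(n^2\det\Hm))$ is $O((\log n)^3/n)=o(n^{-2\bar\beta/(2\bar\beta+1)})$. By Jensen's inequality $\E[\min_{H\in\H}\|\hat f_H-f\|^2]\le\min_{H\in\H}MISE(H)$, so it remains to bound $\min_{H\in\H}MISE(H)$ and the residual bias $\|f_{\Hm}-f\|^2$.

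Next I would control $MISE(H)=\|f_H-f\|^2+\E\|\hat f_H-f_H\|^2$. The variance is bounded as usual by $\E\|\hat f_H-f_H\|^2\le\|K_H\|^2/n=\|K\|^2/(n\det H)$. For the bias, the change of variables gives $\|K_H\star f-f\|_2=\|\tilde K_D\star g-g\|_2$ and, since $\Hm=\bar hI_d$ commutes with $P$, $\|K_{\Hm}\star f-f\|_2=\|\tilde K_{\bar hI_d}\star g-g\|_2$. Because $g\in\mathcal{N}_{{\bf 2},d}(\boldsymbol{\beta},{\bf L})$ and $\ell>\max_j\beta_j$, the standard anisotropic kernel bias estimate (coordinatewise Taylor expansion, the $\lfloor\beta_j\rfloor$-th order finite-difference bound in the Nikol'skii seminorm, and the vanishing moments of $\tilde K$ up to order $\ell-1$) yields $\|\tilde K_D\star g-g\|_2\le c(\boldsymbol{\beta},K,d)\sum_{j=1}^d L_jh_j^{\beta_j}$, whence $MISE(H)\le c'\sum_{j=1}^d L_j^2h_j^{2\beta_j}+\|K\|^2/(n\prod_j h_j)$; applying the same estimate with $h_j=\bar h$ and $\bar h^d\asymp 1/n$ controls $\|f_{\Hm}-f\|^2$, which is negligible compared with the announced rate exactly as for the corresponding term in the proof of Corollary~7 of \cite{PCO_2016}. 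Finally I would optimise the bound on $MISE(H)$ over the grid: setting $\delta:=\big(\prod_j L_j^{1/\beta_j}\big)^{2\bar\beta/(2\bar\beta+1)}n^{-2\bar\beta/(2\bar\beta+1)}$ and $h_j^\star:=1/\lceil(L_j^2/\delta)^{1/(2\beta_j)}\rceil$, one checks that for $n$ large $(h_j^\star)^{-1}\in\mathbb N^*$, $h_j^\star\le 1$ and $\prod_j h_j^\star\asymp n^{-1/(2\bar\beta+1)}\ge\bar h^d$, so $H^\star:=P^{-1}\mathrm{diag}(h_1^\star,\ldots,h_d^\star)P\in\H$ and $\min_{H\in\H}MISE(H)\le MISE(H^\star)\le M'\delta$; collecting the three bounds and using that $C_0(\varepsilon,\lambda),C_2(\varepsilon,\lambda)$ and all implied constants depend only on $\boldsymbol{\beta},K,B,d,\lambda$ (and the fixed $\varepsilon$) delivers the claimed rate.

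The main obstacle is the anisotropic bias estimate $\|\tilde K_D\star g-g\|_2\lesssim\sum_j L_jh_j^{\beta_j}$ together with the bookkeeping needed to verify that the balancing bandwidth $H^\star$ lands in the discrete grid $\H$ and that the $\prod_j L_j^{1/\beta_j}$ factor comes out with exactly the exponent $2\bar\beta/(2\bar\beta+1)$; by contrast, the passage from the high-probability oracle inequality to the expectation bound and the verification that the remainder terms are $o(n^{-2\bar\beta/(2\bar\beta+1)})$ are routine.
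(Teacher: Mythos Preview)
Your proposal is correct and follows essentially the same route as the paper: the orthogonal change of variables to reduce the bias to a diagonal-bandwidth estimate on $g=f\circ P^{-1}$ is exactly the content of the paper's Lemma~\ref{biais}, and the passage from the high-probability oracle inequality to an expectation bound via a crude estimate on the complement, followed by optimization over the grid, mirrors the paper's argument. The only noteworthy difference is that the paper additionally invokes Proposition~\ref{mino} on the good event to replace each $\|\hat f_H-f\|^2$ by the deterministic $\|f-f_H\|^2+\|K_H\|^2/n$ before taking expectations, whereas you take expectations first and use $\E[\min_H\|\hat f_H-f\|^2]\le\min_H MISE(H)$ (which, incidentally, is not Jensen but the trivial bound $\min_H\le$ any fixed $H$); your route is a mild simplification that avoids Proposition~\ref{mino} altogether.
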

Corollary \ref{corovitesse} is proved in Section \ref{sec:proofs}.
Theorem~3 of \cite{GL14} states that up to the constant $M$, we cannot improve the rate achieved by our procedure. So, the latter achieves the adaptive minimax rate over the class $\tilde{\mathcal{N}}_{{\bf 2},d}(\boldsymbol{\beta},{\bf L})$.  

%

\subsubsection{Rule of thumb (RoT)}
For a general parametrization, in \cite{Wand92}, the authors derive the formula for the AMISE expansion of the MISE and also look at the particular case of the multivariate normal density with a gaussian kernel.
More precisely, the AMISE expansion given by Equation~\eqref{AMISEmD} depends on $f$ through $\Psi_f$. The easiest way to minimize the AMISE is to take, for $f$, the multivariate Gaussian density $\mathcal{N}(\bm{m}, \Sigma)$ with mean $\bm{m}$ and covariance matrix $\Sigma$ in the expression of $\Psi_f$ (see \eqref{eq:psi_f}), combined with $K$, the standard Gaussian kernel, in the AMISE expression (see \eqref{AMISEmD}). Then,
the AMISE-optimal bandwidth matrix is 
\begin{equation}
\hat{H}_{RoT} = \left( \frac{4}{n(d+2)}\right)^{\frac{1}{d+4}}\hat{\Sigma}^{\frac{1}{2}},
\end{equation}
where $\hat{\Sigma}$ is the empirical covariance matrix of the data  \cite{Wand92}.
\subsubsection{Least-Square Cross-Validation (UCV)}
The multivariate generalisation of the least-square cross-validation was developed in \cite{Stone_1984}. It can easily be observed that computations leading to the Cross-Validation criterion for univariate densities can be extended without any difficulty to the case of multivariate densities and we set
\begin{equation}
\hat{H}_{\rm ucv} = \argmin_{H} l_{\rm ucv}(H),
\end{equation} 
with
\begin{equation}
l_{\rm ucv}(H) = \frac{1}{n^2}\Sum_i\Sum_j K^*_{H}\left(\bm{X}_i - \bm{X}_j\right)
+ \frac{2}{n}K_{H}({\bf 0}),
\end{equation}
where $K_H^*({\bf u}) = (K_H\star\tilde K_H)({\bf u}) -2K_H({\bf u})$, still by denoting '$\star$' the convolution product and $\tilde K_H({\bf u})=K_H(-{\bf u})$.

\subsubsection{Smoothed Cross-Validation (SCV)}
The Smoothed Cross-Validation (SCV) approach proposed by \cite{DuongHazelton_2005b} is based on the improvement of the AMISE approximation of the MISE by replacing the first term of \eqref{AMISEmD} with the exact integrated squared bias. Then, cross-validation is used to estimate the bias term. Therefore, the objective function for the multivariate SCV methodology is 
\begin{equation}
l_{\rm SCV}(H) = \frac{\|K\|^2}{n{\rm det}(H)} + \frac{1}{n^2}\Sum_{i=1}^{n}\Sum_{j=1}^{n} (K_H \star K_H \star L_G \star L_G - 2K_H \star L_G \star L_G + L_G \star L_G)(\bm{X}_i-\bm{X}_j)
\end{equation}
where $L$ is the pilot kernel and $G$ the pilot bandwidth matrix and the selected bandwidth is then
\begin{equation}
\hat{H}_{\rm SCV} = \argmin_{H} l_{\rm SCV}(H).
\end{equation}
See Section~3 of \cite{DuongHazelton_2005b} or Sections~2 and 3 of \cite{ChaconDuong_2011} for more details. To design the pilot bandwidth matrix, Duong and Hazelton \cite{DuongHazelton_2005b} restrict to the case $G=g\times I_d$ for $g\in\R_+^*$, whereas Chac\'on and Duong \cite{ChaconDuong_2011} consider full matrices.
\subsubsection{Plug-in (PI)}
In the same spirit as the one-dimensional SJ estimator described in Section~\ref{sec:SJ}, the goal of the multivariate plug-in estimator is to minimize $H\mapsto AMISE(H)$ which depends on the unknown matrix $\Psi_f$ whose elements are given by the 
$\psi_{\bm{r}}$'s  for all ${\bm{r}} = (r_1,...,r_d)\in \N^d$ such that $\vert \bm{r}\vert = \Sum_{i=1}^{d}r_i = 4$ and defined by
\begin{equation}
\psi_{\bm{r}} = \int f^{(\bm{r})}(\bm{x})f(\bm{x})d\bm{x}
\hspace{2cm}
\text{ where }\:
f^{(\bm{r})} = \frac{\partial ^{\vert \bm{r}\vert} f}{\partial x_{1}^{r_1} \hdots \partial x_{d}^{r_d}}.
\end{equation}
In \cite{WandJones_1994}, the elements $\psi_{\bm{r}}$ are estimated by 
\begin{equation}
\hat{\psi}_{\bm{r}}(G) = \frac{1}{n^2}\Sum_{i=1}^{n}\Sum_{j=1}^{n}L_{G}^{({\bm{r}})}(\bm{X}_i - \bm{X}_j),
\end{equation}
where, as usual, $L$ is a pilot kernel and $G$ a pilot bandwidth matrix. Some limitations of this approach are emphasized in \cite{WandJones_1994}. This is the reason why  \cite{ChaconDuong_2010} alternatively suggests to estimate $\Psi_f$ by using 
\begin{equation}\label{hatpsi}
\hat{\Psi}_{4}(G) = \frac{1}{n^2}\Sum_{i=1}^{n}\Sum_{j=1}^{n}D^{\otimes 4}L_{G}(\bm{X}_i - \bm{X}_j),
\end{equation}
with $\otimes r$ the $r$th Kronecker product. Section~4.1 of \cite{ChaconDuong_2010}  describes the choice of $G$ and finally the selected bandwidth is given by
\begin{equation}
\hat{H}_{\rm PI} = \argmin_{H} \widehat{AMISE(H)}
\end{equation}
where $$\widehat{AMISE(H)} = \frac{1}{4}\mu_{2}^{2}(K)({\rm vech} H^2)^T\hat{\Psi}_{4}(G)  ({\rm vech} H^2) + \frac{\|K\|^2}{n{\rm det}(H)}.$$ 

\section{Numerical study}
To study the numerical performances of the PCO approach, a large set of testing distributions has been considered. For the univariate case, we use the benchmark densities proposed by \cite{MarronWand_1992} whose list is slightly extended. 
See Figure \ref{fig:pdftest1D} in Appendix and Table \ref{tab:PDF1D} for the specific definition of 19 univariate densities considered in this paper. 
For multivariate data, we start from the 12 benchmark densities proposed by \cite{Chacon_09} and PCO is tested on an extended list of 14 densities (see Table~\ref{tab:PDFtest2D} and  Figure~\ref{fig:pdftest2D}). Their definition is generalized to the case of dimensions 3 and 4 (see Tables ~\ref{tab:PDFtest3D}  and \ref{tab:PDFtest4D} respectively). We provide 3-dimensional representations of the testing densities in Figure~\ref{fig:pdftest3D}.
\subsection{PCO implementation and complexity}

This section is devoted to implementation aspects of PCO and we observe that its computational cost is very competitive with respect to competitors considered in this paper. We first deal with the univariate case for which three kernels have been tested, namely the Gaussian, the Epanechnikov and the biweight kernels, respectively defined by:
\begin{equation*}
K(u) = \frac{1}{\sqrt{2\pi}}\exp{\left(-\frac{1}{2}u^2 \right)},\:\:
K(u) = \frac{3}{4}(1-u^2)\I_{\{\vert u\vert \leq 1\}},\:\:
K(u) = \frac{15}{16}(1-u^2)^2\I_{\{\vert u\vert \leq 1\}}.
\end{equation*}
For any kernel $K$, $\Vert K_h \Vert^{2}=h^{-1}\Vert K \Vert^{2}.$
If $K$ is the Gaussian kernel, $\Vert K\Vert ^{2} = ({2\sqrt{\pi}})^{-1}$, and the penalty term defined in \eqref{pen} can be easily expressed:
$$\pen_\lambda(h)=\frac{\lambda\|K_h\|^2-\|K_{h_{min}}-K_h\|^2}{n}=\frac{1}{2\sqrt{\pi}n}\left( \frac{\lambda-1}{h} - \frac{1}{h_{\rm min}}+2\sqrt{\frac{2}{h^2 + h_{\rm min}^2}}\right).$$
For the Epanechnikov kernel, we have $\Vert K \Vert ^{2} = 3/5$ and 
$$\pen_\lambda(h)=\frac{1}{n}\left( \frac{3(\lambda-1)}{5h}-\frac{3}{5h_{\rm min}}+\frac{3}{2}\frac{h^2-h_{\rm min}^2/5}{h^3}\right).$$
With a biweight kernel, since $\Vert K \Vert^{2} = 5/7$, the penalty term becomes
$$\pen_\lambda(h)=\frac{1}{n}\left( \frac{5(\lambda-1)}{7h} -  \frac{5}{7h_{\rm min}} + \frac{15}{8} \left( \frac{1}{h} + \frac{h_{\rm min}^4}{21h^5} - \frac{6h_{\rm min}^2}{21h^3}\right)\right).$$
%
%
%
Moreover, the loss $\Vert \hat{f}_{h_{\rm min}} -  \hat{f}_{h}\Vert^{2}$ can be expressed as
\begin{equation*}
\Vert \hat{f}_{h_{\rm min}} -  \hat{f}_{h}\Vert^{2} = \frac{1}{n^2}\Sum_{i=1}^{n}\Sum_{j=1}^{n}(K_h\star K_h)(X_i-X_j) -2(K_h\star K_{h_{\rm min}})(X_i-X_j) + (K_{h_{\rm min}}\star K_{h_{\rm min}})(X_i - X_j).
\end{equation*}
With a Gaussian kernel, this formula has a simpler expression:
\begin{equation*}
\Vert \hat{f}_{h_{\rm min}} -  \hat{f}_{h}\Vert^{2} = \frac{1}{n^2}\Sum_{i=1}^{n}\Sum_{j=1}^{n} K_{\sqrt{2}h}(X_i-X_j) - 2K_{\sqrt{h^2 + h_{\rm min}^2}}(X_i-X_j) + K_{\sqrt{2}h_{\rm min}}(X_i - X_j).
\end{equation*}
Omitting terms of $l_{\rm PCO}$ not depending on $h$  (see \eqref{critworse}), for the Gaussian kernel, the PCO bandwidth is obtained as follows:
{\small
\begin{equation*}
\hat{h}_{\rm PCO} = \argmin_{h \in \mathcal{H}} \left\{\frac{1}{n^2}\Sum_{i=1}^{n}\Sum_{j=1}^{n} \Big(K_{\sqrt{2}h}(X_i-X_j) - 2K_{\sqrt{h^2 + h_{\rm min}^2}}(X_i-X_j)\Big) + \frac{1}{n\sqrt{\pi}}\sqrt{\frac{2}{h^2 + h_{\rm min}^2}}  + \frac{\lambda - 1}{2nh\sqrt{\pi}}\right\}.
\end{equation*}
}
{Similarly to $\hat{h}_{\rm UCV}$, the expression to minimise can be computed  through a $O(n^2)$ algorithm.}
Note that when the tuning parameter is fixed to $\lambda=1$, the PCO bandwidth is just
{\small
\begin{equation*}
\hat{h}_{\rm PCO} = \argmin_{h \in \mathcal{H}} \left\{\frac{1}{n^2}\Sum_{i=1}^{n}\Sum_{j=1}^{n} \Big(K_{\sqrt{2}h}(X_i-X_j) - 2K_{\sqrt{h^2 + h_{\rm min}^2}}(X_i-X_j)\Big) + \frac{1}{n\sqrt{\pi}}\sqrt{\frac{2}{h^2 + h_{\rm min}^2}} \right\}.
\end{equation*}
} One can obtain similar expressions for other kernels. 
Regarding the set $\mathcal H$, the bandwidth $h$ is chosen in a set of real numbers built from a low-discrepancy sequence and more precisely is a rescaled Sobol sequence \cite{sobol} such that we obtain a uniform sampling of the interval $[\frac{1}{n}, 1]$. We add to the sequence $h_{\text{min}}=\|K\|_\infty/n$ and finally, $\mbox{card}(\mathcal{H})=400$.

For the multivariate case, similar simplifications can be used. In particular, we have
$$\|K_H\|^2 = \frac{\|K\|^2}{|\det{H}|}$$ 
Considering the Gaussian kernel for which we have $\|K\|^2=(2\sqrt{\pi})^{-d}$,
we obtain
$$\|K_{\Hm}-K_H\|^2 = \frac{1}{|\det(H)|(2\sqrt{\pi})^d} + \frac{1}{|\det(H_{\rm min})|(2\sqrt{\pi})^d} - \frac{2}{\sqrt{\det(H^2 + H_{\rm min}^2)} (2\pi)^{d/2}}$$ and using easy extensions of simplifications detailed for the univariate case, we obtain
\begin{equation*}
\Vert \hat{f}_{H_{\rm min}} -  \hat{f}_{H}\Vert^{2} = \frac{1}{n^2}\Sum_{i=1}^{n}\Sum_{j=1}^{n} K_{\sqrt{2}H}(\bm{X}_i-\bm{X}_j) - 2K_{\sqrt{H^2 + H_{\rm min}^2}}(\bm{X}_i-\bm{X}_j) + K_{\sqrt{2}H_{\rm min}}(\bm{X}_i - \bm{X}_j).
\end{equation*}
We easily obtain $\hat{H}_{\rm PCO}$ as
{\small
\begin{align*}
\hat{H}_{\rm PCO} = \argmin_{h \in \mathcal{H}} &\left\{\frac{1}{n^2}\Sum_{i=1}^{n}\Sum_{j=1}^{n} \Big(K_{\sqrt{2}H}(\bm{X}_i-\bm{X}_j) - 2K_{\sqrt{H^2 + H_{\rm min}^2}}(\bm{X}_i-\bm{X}_j)\Big) \right.\\
&\mbox{ }\hspace{1cm}+\left.  \frac{2}{n\sqrt{\det(H^2 + H_{\rm min}^2)} (2\pi)^{d/2}}+\frac{\lambda-1}{n|\det(H)|(2\sqrt{\pi})^d} \right\}.
\end{align*}
}
The construction of $ \mathcal{H}$ is similar to the case of univariate data by taking $\mathcal{H}$ such that $\mbox{card}(\mathcal{H})=16^d$. 

We see  that the time complexity of PCO is the same as UCV, that is $O(d^3n^2|\H|)$. BCV and plug-in methods have the same complexity $O(d^3n^2)$, so that there is no difference between methods in terms of asymptotic complexity, except for RoT which is lighter since a single bandwidth is computed. Space complexity of PCO is also the same as UCV.
\subsection{Tuning of PCO and brief numerical illustrations}\label{sec:comp1}
As suggested by Theorem~2 of \cite{PCO_2016} for the univariate case and Theorem~\ref{io3} for the multivariate case, the optimal theoretical value for the tuning parameter $\lambda$ is $\lambda=1$. See arguments given in Remark~\ref{choix-lambda} which are now confronted with a short numerical study. For this purpose, we consider the univariate case 
and study the risk of the PCO estimate with respect to $\lambda$. More precisely, for each benchmark density, with $n=100$, for previous kernels, and for 20 samples, we determine successively the risk $\Vert f-\hat{f} \Vert^2$; Figure~\ref{fig:risk_vs_c_1D_G} provides the Monte Carlo mean of the risk over these samples in function of the PCO tuning parameter~$\lambda$. 
\begin{figure}
	\begin{center}
		\subfloat[Gaussian kernel]{
				\includegraphics[trim = 1cm 1.2cm 0cm 1.5cm, clip, width=0.4\textwidth, valign=c]
				{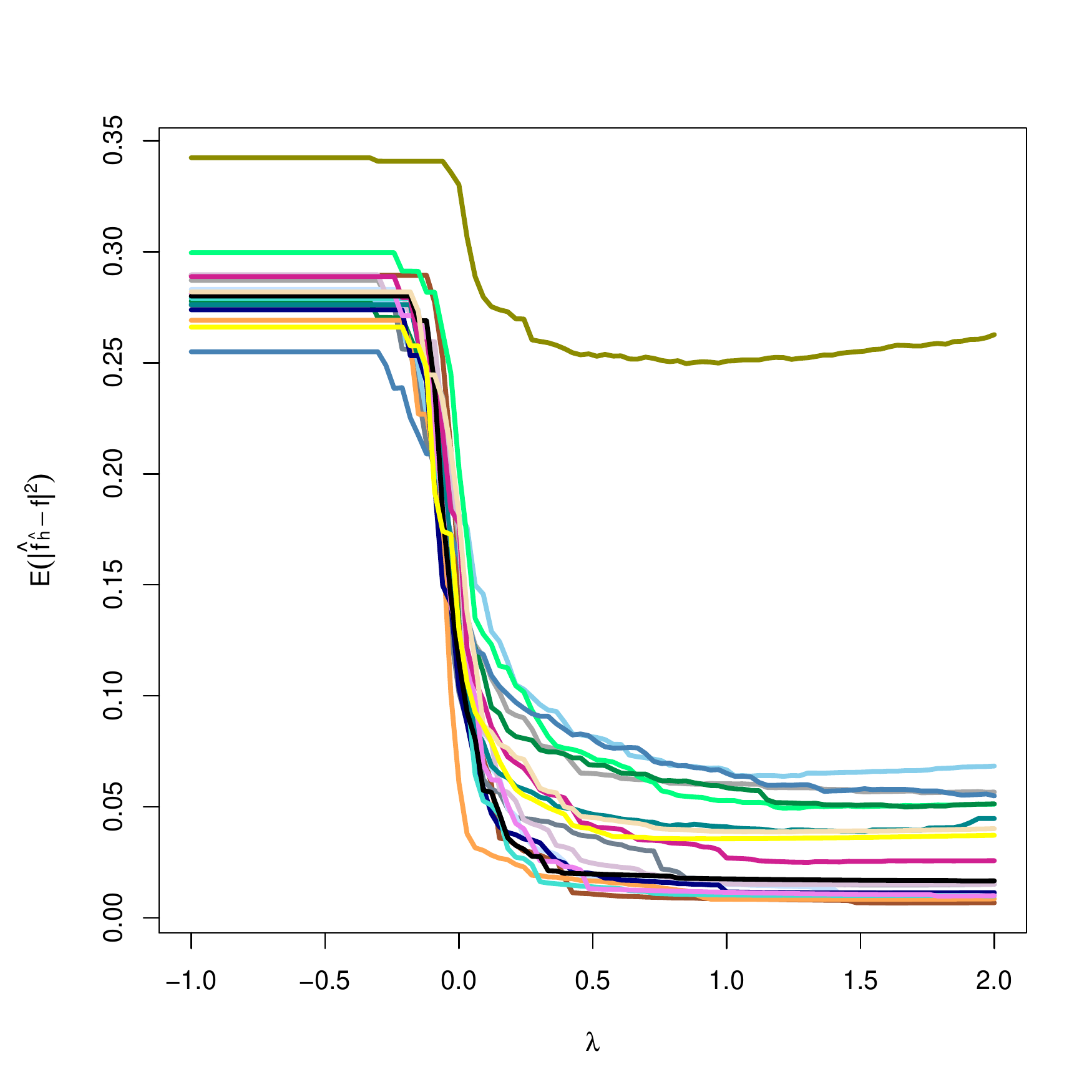}}
		%
		\subfloat[Epanechnikov kernel]{
				\includegraphics[trim = 1cm 1.2cm 0cm 1.5cm, clip, valign=c, width=0.4\textwidth]	
				{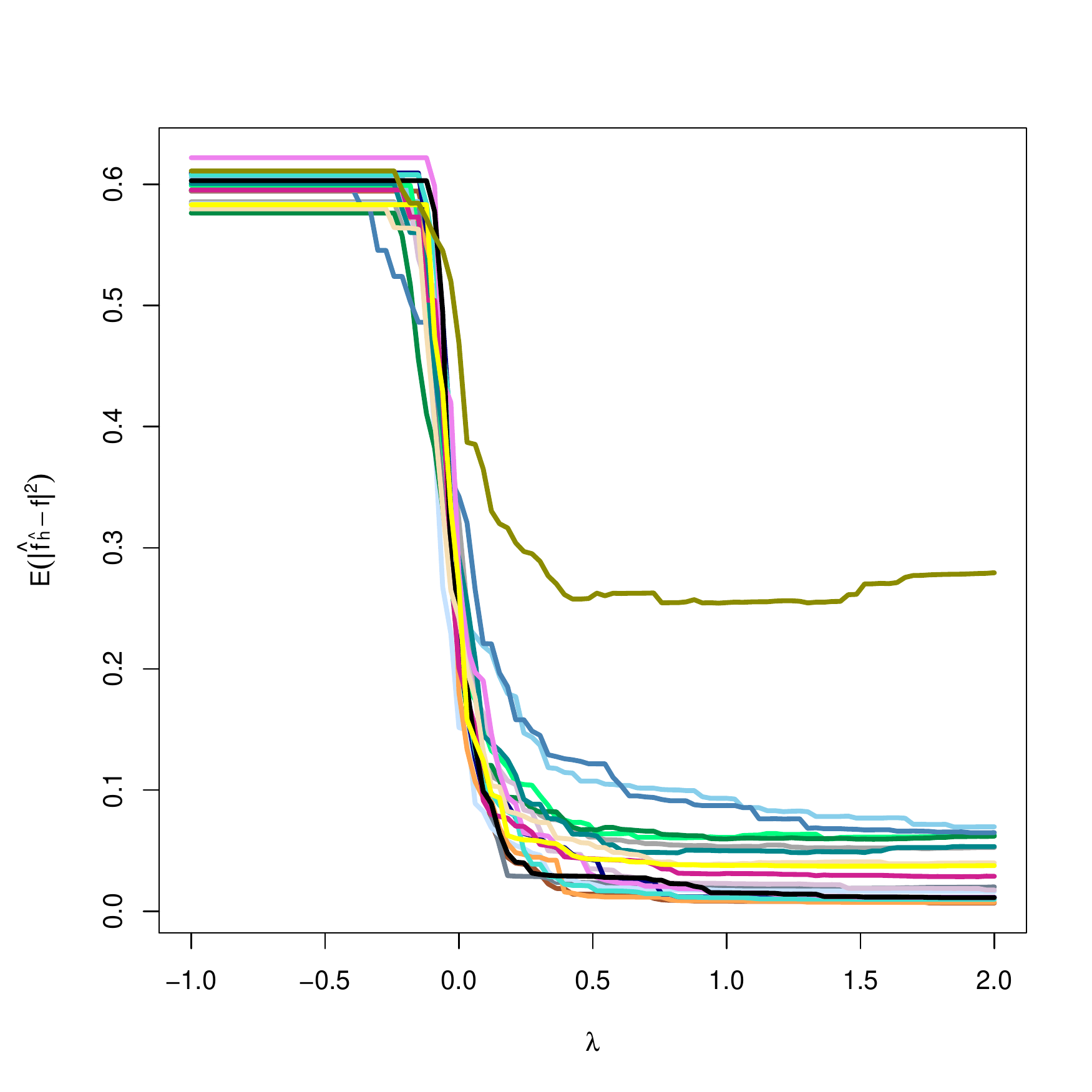}}
		\\%
		\subfloat[Biweight kernel]{
				\includegraphics[trim = 1cm 1.2cm 0cm 1.5cm, clip, valign=b, width=0.4\textwidth]
				{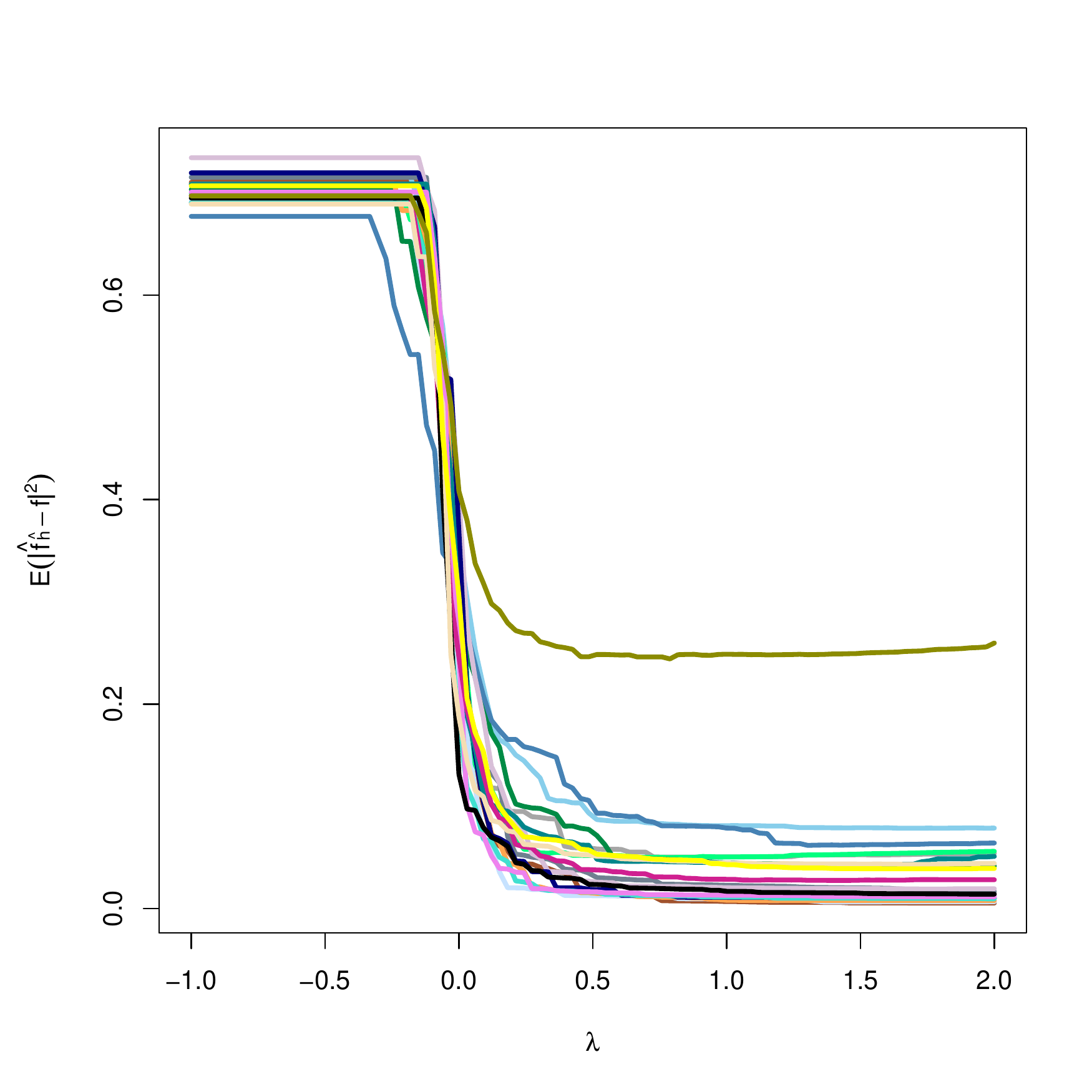}}
				\hspace{3.2cm}
		%
		\subfloat{
				\includegraphics[scale=0.025, valign=c, width=0.2\textwidth]
				{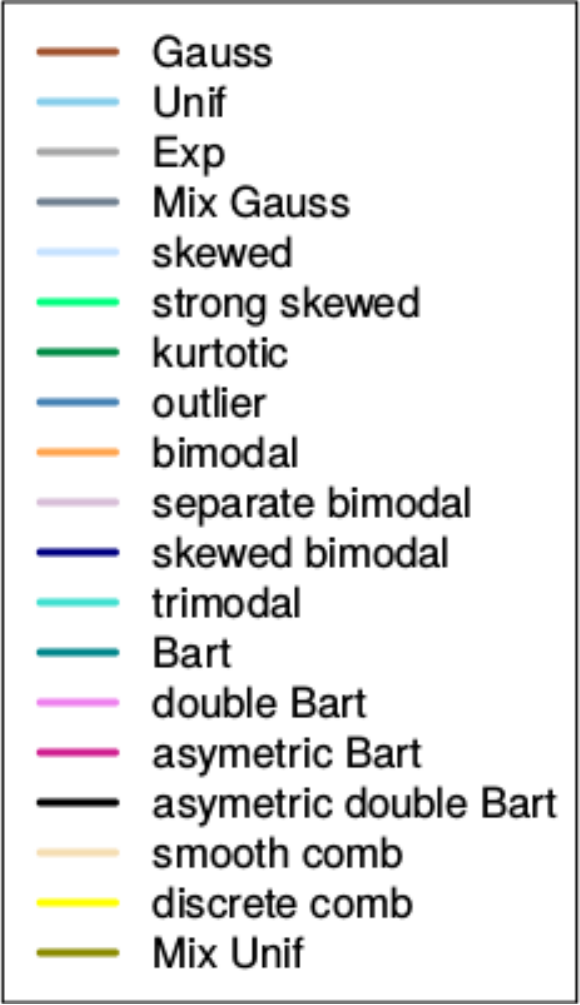}}
		\caption{For each benchmark density $f$, estimated ${\mathbb L}_2$-risk of the PCO estimate by using the Monte Carlo mean over 20 samples in function of the tuning parameter $\lambda$, for the three kernels with $n=100$ observations in the univariate case.}\label{fig:risk_vs_c_1D_G}
	\end{center}
\end{figure}
We observe very similar behaviors for any density and any kernel, namely:\\ - very large values of the risk when $\lambda<0$,\\ - an abrupt change point at $\lambda=0$,\\ - and a plateau where the risk achieves its minimal value, around the value $\lambda=1$.\\ Notice that the maximal range for $\lambda$ considered in Figure~\ref{fig:risk_vs_c_1D_G} is 2 since the risk increases for larger values. We observe very similar behaviors for larger datasets (not shown in this paper). The plateau phenomenon means that in practice, considering $\lambda=1$ instead of the true minimizer of the risk does not impact the numerical performances of PCO significantly. 
Thus, in subsequent numerical experiments, the tuning parameter is fixed at 1. It means that the PCO approach behaves as if it were tuning-free. This represents a great advantage compared to other kernel methodologies based for instance on Lepski-type procedures very hard to tune in practice.  
\begin{remark}
A natural alternative would consist in detecting the abrupt jump $\hat j$ of one of the functions $\lambda\mapsto \hat{h}_{\rm PCO}$ or $\lambda\mapsto\Vert \hat{f}_{\hat{h}} - \hat{f}_{h_{\text{min}}} \Vert ^2$ (see Theorem~3 of \cite{PCO_2016}) and then tuning PCO with $\lambda=\hat j+1$. This alternative provides very similar results and is not considered in the sequel.
\end{remark}

Before comparing PCO to classical approches for kernel density estimation, we briefly illustrate its numerical performances in the multidimensional setting. For this purpose, we implement in Figure~\ref{fig:normdiff_f_vs_h_2D_Sk+_init} the square root of the ISE for one realization with respect to all possible diagonal bandwidths matrices on two benchmark densities, namely Asymmetric Bimodal and Asymmetric Fountain when $n=100$.
\begin{figure}[t]
	\begin{center}
		\subfloat[ABi]{
			\resizebox*{8cm}{!}{
				\includegraphics[height=8cm, width=8cm]
				{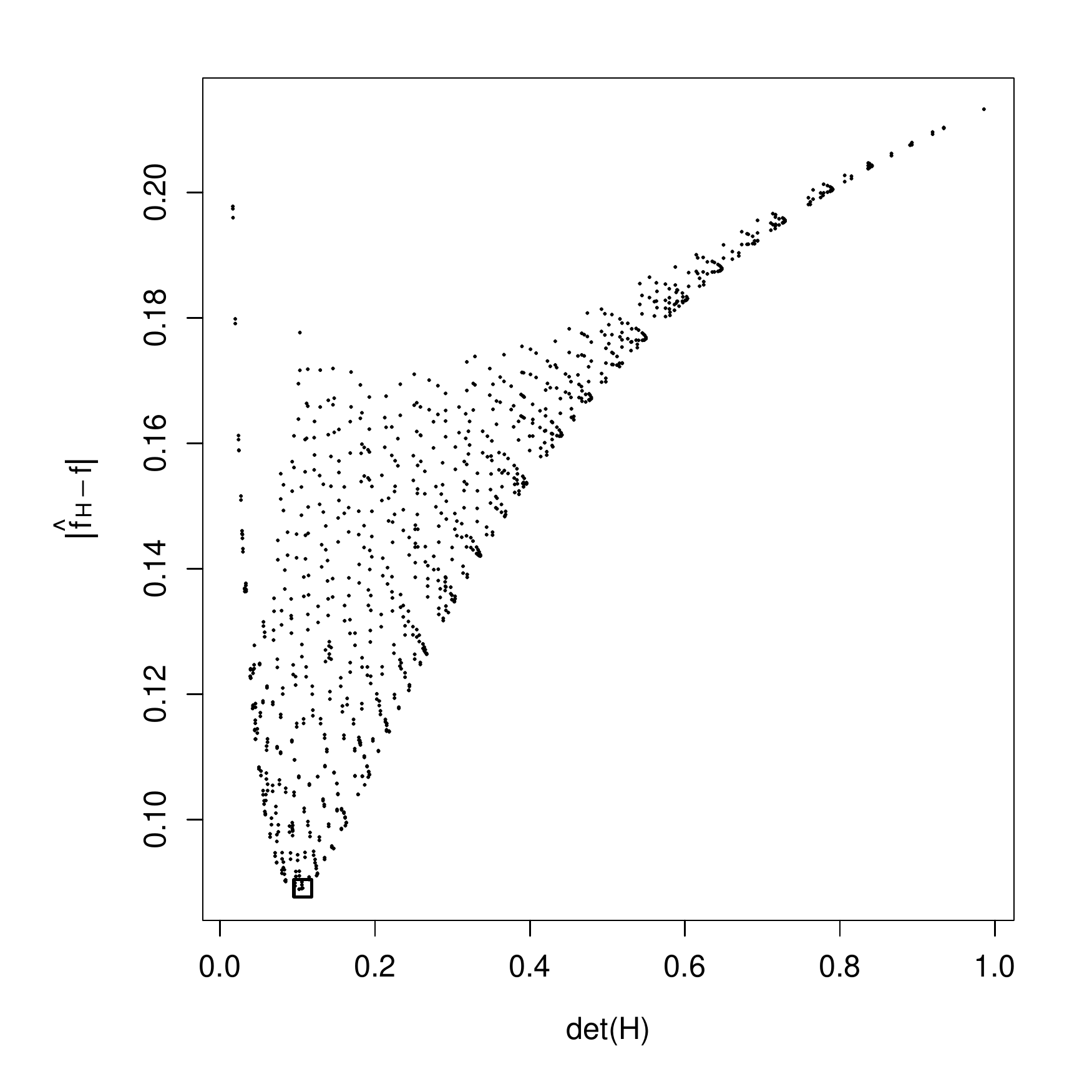}}}
		%
		\subfloat[DF]{
			\resizebox*{8cm}{!}{
				\includegraphics[height=8cm, width=8cm]		
				{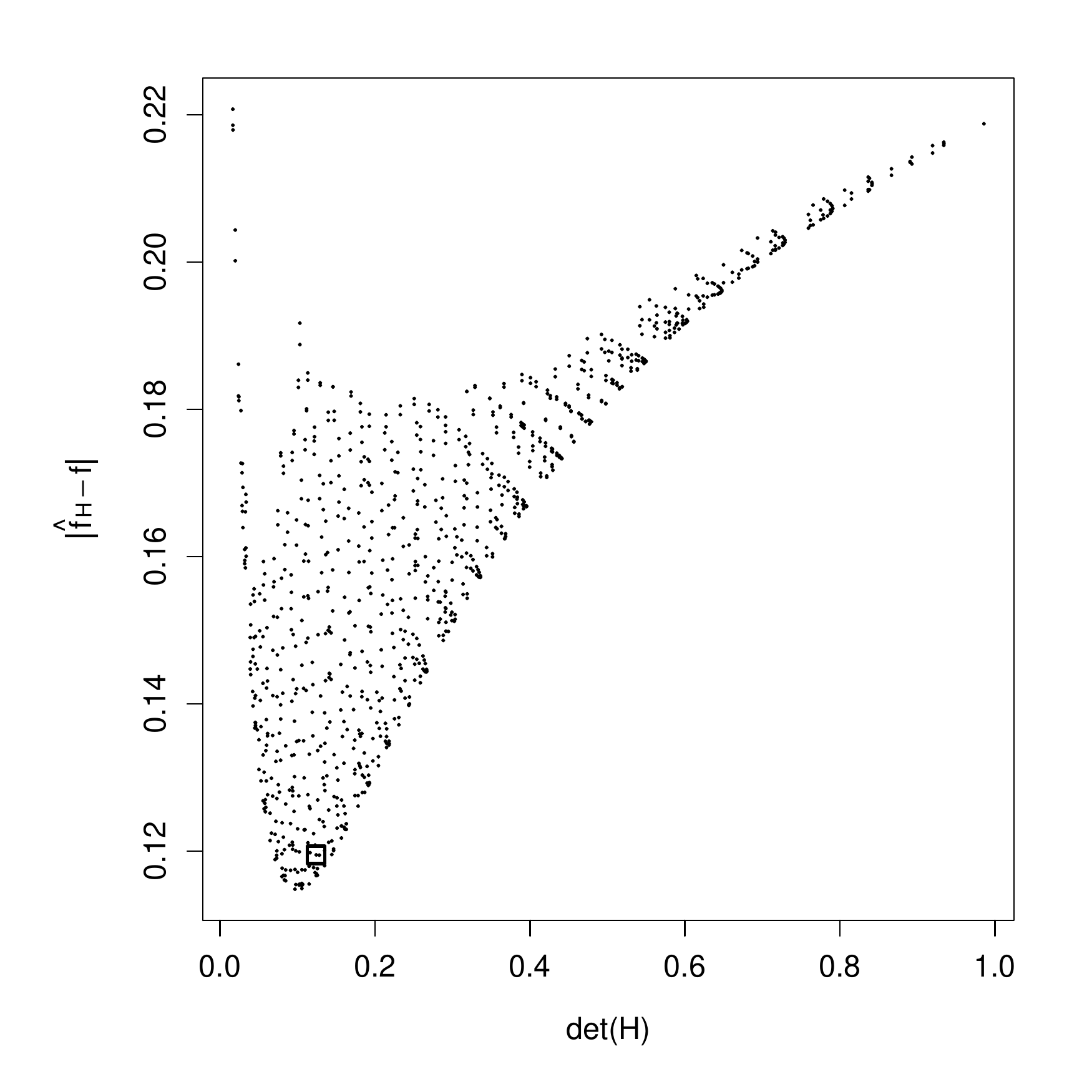}}}
		\caption{Square root of the ISE against $\det(H)$ for all $H\in\mathcal{H}$ with $\mathcal{H}$ a set of $2\times 2$ diagonal matrices for densities ABi  and DF, with $n=100$. The square corresponds to the bandwidth selected by PCO.}
		\label{fig:normdiff_f_vs_h_2D_Sk+_init}
	\end{center}
\end{figure}
Figure~\ref{fig:normdiff_f_vs_h_2D_Sk+_init} shows that the bandwidth selected by PCO is - or is close to - the optimal bandwidth. This short illustration only deals with $d=2$, one given sample size and a small set of benchmark densities. But similar behaviors are observed in more general settings. Next sections are devoted to deep numerical  comparisons with the classical kernel approaches described in Section~\ref{sec:all_meth}.
\subsection{Numerical comparisons for univariate density estimation}\label{sec:num1D}
In this section, for all methods except PCO, the bandwidth selection is performed through the \texttt{R stats} package \cite{Rstats}.  

For each testing density $f$ and each methodology described in Section~\ref{sec:1D} and denoted ${\it meth}$, we compute the square root of the Integrated Square Error defined in \eqref{ISE1D}  associated with the bandwidth selected by each methodology and viewed as a function of $f$. With a slight abuse of notation, we denote it $ISE^{1/2}_{\rm meth}(f)$.  With $n=100$, Table \ref{tab:moy_ISE_n100D1} provides $\overline{ISE}^{1/2}_{\rm meth}(f)$, the Monte Carlo mean over 20 samples, for each kernel. Since results for both variants of the Rule of Thumb approach are very close (see Figure~\ref{fig:med_vs_n_1D}), we only give results associated with Expression~\eqref{nrd1D}. 
\begin{table}
	\tiny
	\begin{center}
		\begin{tabular}[l]{@{}lccc|ccc|ccc|ccc|ccc|ccc}

					& \multicolumn{3}{c|}{RoT}	& \multicolumn{3}{c|}{UCV}	
					& \multicolumn{3}{c|}{BCV}	& \multicolumn{3}{c|}{SJste}	
					& \multicolumn{3}{c|}{SJdpi} & \multicolumn{3}{c}{PCO}	\\
					& G	& E	& B & G	& E & B	& G & E	& B	& G	& E & B	& G & E & B & G & E & B \\

			G & \textbf{0.06} & \textbf{0.07} & \textbf{0.07} & 0.08 & 0.08 & 0.08 & 0.07 & \textbf{0.07} & \textbf{0.07} & 0.07 & 0.08 & \textbf{0.07} & \textbf{0.07} & 0.07 & \textbf{0.07} & 0.08 & 0.08 & 0.08\\
			U & \textbf{0.26} & 0.27 & \textbf{0.28} & \textbf{0.25} & 0.29 & \textbf{0.28} & 0.30 & 0.33 & 0.32 & \textbf{0.25} & \textbf{0.25} & \textbf{0.27} & \textbf{0.25} & \textbf{0.26} & \textbf{0.27} & \textbf{0.26} & 0.30 & \textbf{0.28}\\
			E & 0.29 & 0.30 & 0.28 & \textbf{0.24} & \textbf{0.23} & \textbf{0.22} & 0.31 & 0.34 & 0.32 & \textbf{0.23} & \textbf{0.23} & \textbf{0.23} & 0.25 & 0.25 & 0.24 & \textbf{0.24} & \textbf{0.23} & \textbf{0.22}\\
			MG & 0.26 & 0.29 & 0.28 & 0.13 & \textbf{0.13} & 0.14 & 0.17 & 0.26 & 0.23 & \textbf{0.12} & \textbf{0.14} & \textbf{0.12} & 0.13 & 0.17 & 0.15 & 0.13 & \textbf{0.13} & 0.14\\
			Sk & \textbf{0.08} & \textbf{0.09} & \textbf{0.08} & 0.10 & 0.11 & 0.08 & \textbf{0.09} & \textbf{0.09} & 0.08 & 0.09 & 0.10 & \textbf{0.08} & 0.09 & 0.09 & \textbf{0.08} & 0.11 & 0.11 & 0.09\\
			Sk+ & 0.36 & 0.39 & 0.39 & \textbf{0.22} & \textbf{0.24} & \textbf{0.22} & 0.45 & 0.48 & 0.46 & \textbf{0.23} & 0.27 & 0.26 & 0.26 & 0.31 & 0.30 & \textbf{0.22} & \textbf{0.24} & \textbf{0.22}\\
			K & 0.32 & 0.37 & 0.34 & 0.25 & \textbf{0.24} & \textbf{0.20} & 0.48 & 0.50 & 0.49 & \textbf{0.22} & \textbf{0.24} & \textbf{0.20} & 0.24 & 0.28 & 0.24 & 0.23 & \textbf{0.24} & \textbf{0.21}\\
			O & \textbf{0.20} & \textbf{0.22} & \textbf{0.23} & 0.25 & 0.25 & \textbf{0.24} & 1.37 & 1.40 & 1.39 & 0.22 & 0.23 & \textbf{0.24} & \textbf{0.21} & \textbf{0.23} & \textbf{0.24} & 0.24 & 0.26 & 0.26\\
			Bi & \textbf{0.09} & 0.09 & 0.10 & \textbf{0.09} & 0.09 & 0.09 & 0.12 & 0.13 & 0.13 & \textbf{0.09} & \textbf{0.08} & \textbf{0.09} & \textbf{0.08} & \textbf{0.08} & \textbf{0.09} & \textbf{0.09} & 0.09 & \textbf{0.09}\\
			SB & 0.21 & 0.23 & 0.23 & \textbf{0.12} & 0.12 & 0.13 & \textbf{0.12} & \textbf{0.11} & \textbf{0.12} & \textbf{0.12} & \textbf{0.11} & \textbf{0.11} & \textbf{0.12} & \textbf{0.11} & \textbf{0.12} & \textbf{0.12} & 0.14 & 0.13\\
			SkB & 0.10 & 0.11 & 0.11 & 0.10 & \textbf{0.10} & \textbf{0.10} & 0.12 & 0.14 & 0.13 & \textbf{0.09} & \textbf{0.10} & \textbf{0.10} & \textbf{0.09} & \textbf{0.10} & \textbf{0.10} & 0.10 & \textbf{0.10} & \textbf{0.10}\\
			T & 0.10 & 0.11 & 0.11 & 0.10 & 0.10 & 0.10 & 0.13 & 0.15 & 0.14 & \textbf{0.09} & \textbf{0.10} & \textbf{0.10} & \textbf{0.09} & \textbf{0.10} & \textbf{0.10} & 0.10 & 0.10 & \textbf{0.10}\\
			B & 0.23 & \textbf{0.23} & 0.23 & 0.21 & 0.23 & 0.22 & 0.23 & 0.24 & 0.24 & 0.22 & 0.24 & 0.23 & 0.23 & 0.23 & 0.23 & \textbf{0.20} & \textbf{0.22} & \textbf{0.21}\\
			DB & \textbf{0.09} & \textbf{0.10} & \textbf{0.11} & \textbf{0.10} & 0.11 & 0.11 & 0.12 & 0.14 & 0.14 & \textbf{0.09} & \textbf{0.10} & \textbf{0.10} & \textbf{0.09} & 	\textbf{0.10} & \textbf{0.10} & \textbf{0.10} & 0.12 & 0.11\\
			AB & \textbf{0.16} & \textbf{0.17} & \textbf{0.17} & \textbf{0.16} & \textbf{0.17} & \textbf{0.17} & \textbf{0.17} & \textbf{0.17} & \textbf{0.17} & \textbf{0.16} & \textbf{0.17} & \textbf{0.17} & \textbf{0.16} & \textbf{0.17} & \textbf{0.17} & \textbf{0.16} & \textbf{0.17} & \textbf{0.17}\\
			ADB & \textbf{0.12} & 0.12 & \textbf{0.12} & 0.13 & 0.12 & 0.13 & 0.15 & 0.16 & 0.15 & \textbf{0.12} & \textbf{0.10} & \textbf{0.12} & \textbf{0.12} & \textbf{0.10} & \textbf{0.12} & 0.13 & 0.12 & 0.13\\
			SC & 0.29 & 0.31 & 0.31 & \textbf{0.20} & \textbf{0.20} & \textbf{0.21} & 0.32 & 0.34 & 0.31 & 0.21 & 0.21 & 0.22 & 0.23 & 0.24 & 0.25 & \textbf{0.20} & \textbf{0.20} & \textbf{0.21}\\
			DC & 0.34 & 0.36 & 0.35 & \textbf{0.19} & \textbf{0.19} & \textbf{0.20} & 0.35 & 0.35 & 0.35 & \textbf{0.20} & \textbf{0.20} & \textbf{0.20} & 0.29 & 0.31 & 0.30 & \textbf{0.19} & \textbf{0.19} & \textbf{0.21}\\
			MU & 0.65 & 0.67 & 0.66 & \textbf{0.51} & \textbf{0.51} & \textbf{0.49} & 0.77 & 0.76 & 0.76 & 0.54 & 0.56 & 0.56 & 0.57 & 0.59 & 0.59 & \textbf{0.50} & \textbf{0.50} & \textbf{0.49}\\
		\end{tabular} 
	\end{center}
	\caption{Monte Carlo mean of $ISE^{1/2}_{\rm meth}(f)$ over 20 trials with $n=100$ for 6 methodologies described in Section~\ref{sec:1D} tested on the 19 one-dimensional densities and for different kernels ($K \in \{\text{Gaussian (G)}, \text{Epanechnikov (E)}, \text{biweight (B)} \}$). The Monte Carlo mean $\overline{ISE}^{1/2}_{\rm meth}(f)$ is in bold when it is not larger than $1.05 \times \min_{\rm meth}\overline{ISE}^{1/2}_{\rm meth}(f)$.}
	\label{tab:moy_ISE_n100D1}
\end{table}

Considering Monte Carlo mean values in bold of Table~\ref{tab:moy_ISE_n100D1} (such values are not larger than 1.05 times the minimal one), we observe that overall, PCO achieves very satisfying results that are very close to those of UCV and SJste. For most of densities,  BCV and RoT are outperformed by other approaches. When comparing with other methodologies, PCO is outperformed for 4 densities, namely G, U (for the Epanechnikov kernel), Sk and O. Observe that the smooth unimodal densities  Sk and O have a shape close to G, the Gaussian one. Actually, as expected, competitors (associated with the Gaussian kernel) based on a pilot kernel tuned on Gaussian densities (RoT and SJ) outperform other methodologies for such densities. Furthermore, even when PCO, UCV and SJste are not the best methodologies for a given density $f$, their performances are quite satisfying. It is not the case for RoT, BCV and SJdpi that achieve bad results for the densities Sk+, DC and MU.
Finally, the preliminary results of Table \ref{tab:moy_ISE_n100D1} show that the kernel choice has weak influence on the results, which is confirmed for an extended simulation study lead with many values for $n$ (not shown). So, for the subsequent results, we only focus on the Gaussian kernel.  

In view of satisfying performances of PCO for $n=100$, such preliminary results are extended to larger values of $n$ with in mind a clear and simple though complete comparison between PCO and other methodologies. For this purpose, we still consider, for each density $f$,   the square root of the Integrated Square Error for the bandwidth selected by each methodology, denoted $ISE^{1/2}_{\rm meth}(f)$, and we display in Figure~\ref{fig:med_vs_n_1D} the median over 20 samples of  the ratio $ISE^{1/2}_{\rm meth}(f)/ISE^{1/2}_{\rm PCO}(f)$, namely
 $$r^{\rm med}_{{\rm meth}/\rm PCO}(f):=\text{median}\left(\frac{ISE^{1/2}_{\rm meth}(f)}{ISE^{1/2}_{\rm PCO}(f)}\right).$$
 for ${\rm meth} \in \{\text{RoT0}, \text{RoT}, \text{UCV}, \text{BCV}, \text{SJste}, \text{SJdpi}\}$ and $n\in \{100, 500, 1000, 10000\}$. 
\begin{figure}
	\begin{center}
		\includegraphics[scale=0.8]{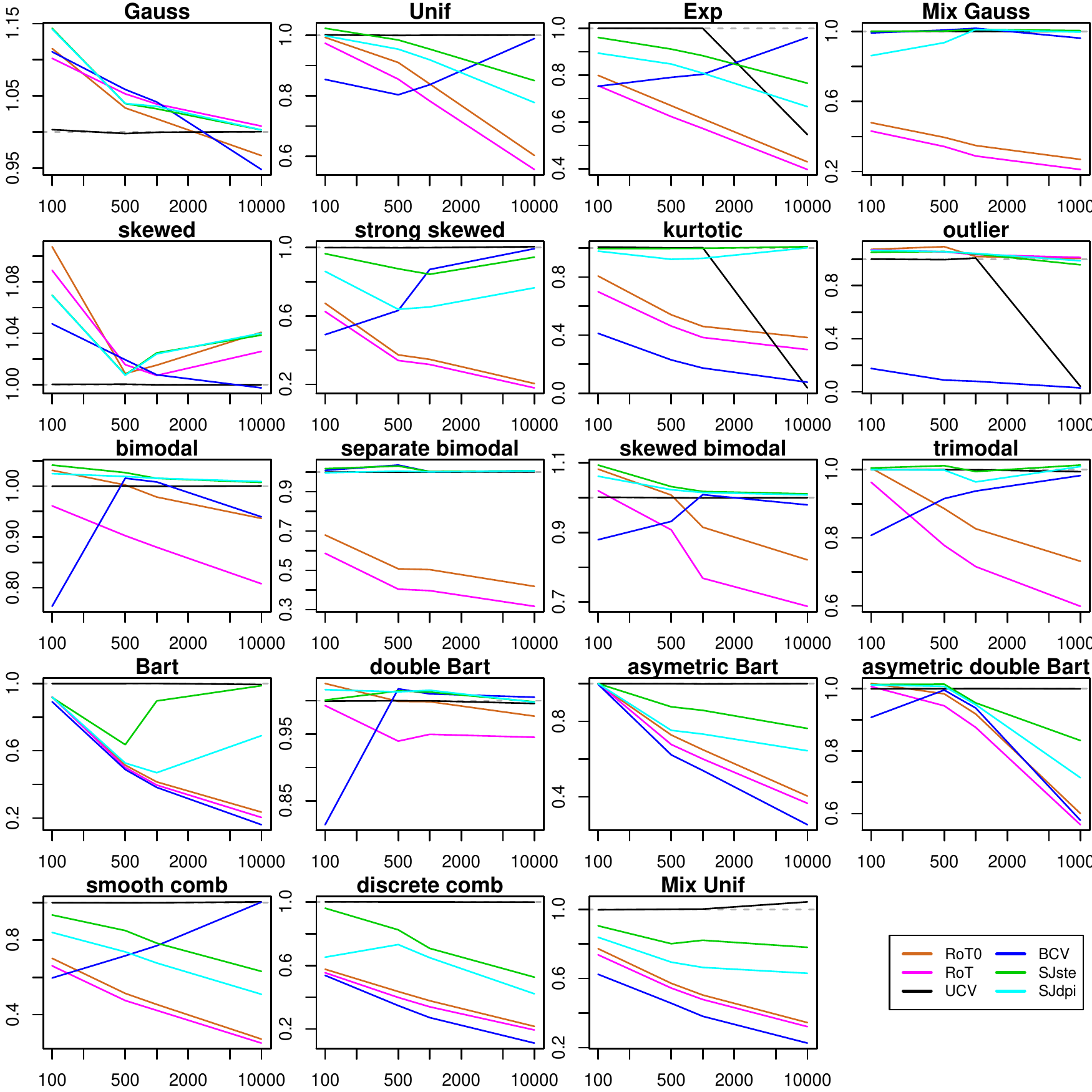}
		\caption{Median over 20 samples of the ratio $ISE^{1/2}_{\rm PCO}(f)/ISE^{1/2}_{\rm meth}(f)$  for ${\rm meth} \in \{\text{RoT0}, \text{RoT}, \text{UCV}, \text{BCV}, \text{SJste}, \text{SJdpi}\}$  with the Gaussian kernel versus the sample size.}
		\label{fig:med_vs_n_1D}
	\end{center}
\end{figure}

A brief look at the results of Figure~\ref{fig:med_vs_n_1D} confirm that PCO, even not dramatically bad, is not the best methodology for densities 'Gauss' (G) and 'Skewed' (Sk). Similar conclusions are true for 'Outlier' (O), except when $n$ is large. Actually, the larger $n$, the better the behavior of PCO with respect to all other competitors. In particular, except for Sk, PCO outperforms all competitors when $n=10 000$. For small values of $n$, when considering the densities 'Bimodal' (Bi), 'Skewed Bimodal' (SkB) and 'Double Bart' (DB), RoT0, SJste and SJdpi achieve better results than PCO. Otherwise, PCO is preferable. Actually, as already observed for $n=100$, PCO and UCV behave quite similarly except for some densities for which performances of UCV deteriorate dramatically when $n$ increases (see 'Exp' (E), 'Kurtotic' (K) and 'Outlier' (O)). Even if not reported, our simulation study shows that the variance of the $ISE^{1/2}_{\rm UCV}(f)$ is much larger than for other methodologies. In particular, PCO has not to face with this issue. Stability with respect to the trial is a non-negligible advantage of PCO.

Finally, for sake of completeness, each approach is compared to the best one through the graph of the mean over all densities $f$ of the ratio of 
$$r_{{\rm meth}/\min}(f):=\frac{\overline{ISE}^{1/2}_{\rm meth}(f)}{\min_{\rm meth}\overline{ISE}^{1/2}_{\rm meth}(f)}.$$
Namely,  for ${\rm meth} \in \{\text{RoT}, \text{UCV}, \text{BCV}, \text{SJste}, \text{SJdpi}, \text{PCO}\}$ and $n\in \{100, 1000, 10000\}$, we display in Figure~\ref{fig:score_vs_n_1D}:
$$\overline{r}_{{\rm meth}/\min}:=\frac{1}{19}\sum_fr_{{\rm meth}/\min}(f).$$ 
\begin{figure}
	\begin{center}
		\includegraphics[scale=0.4, valign=c, trim=0 0 0 50, clip=true]{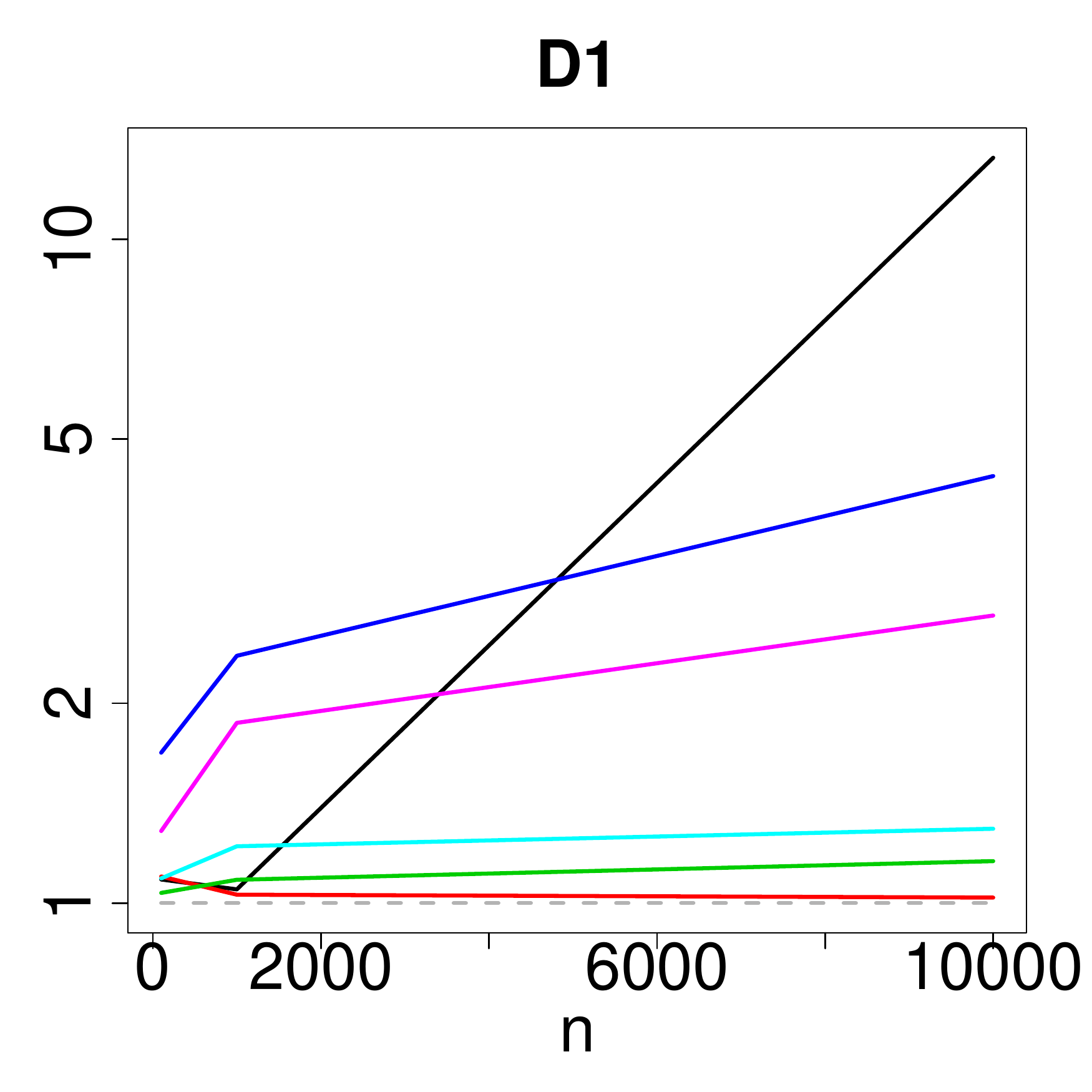}
		\includegraphics[scale=0.5, valign=c, trim=0 0 125 0, clip=true]{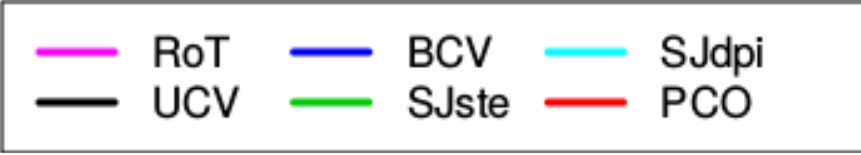}
		\end{center}
		\caption{Graph of the mean over all densities $f$ of the ratio of 
$r_{{\rm meth}/\min}(f):=\frac{\overline{ISE}^{1/2}_{\rm meth}(f)}{\min_{\rm meth}\overline{ISE}^{1/2}_{\rm meth}(f)}$ for ${\rm meth} \in \{\text{RoT}, \text{UCV}, \text{BCV}, \text{SJste}, \text{SJdpi}, \text{PCO}\}$  with the Gaussian kernel versus the sample size.}\label{fig:score_vs_n_1D} 
	\end{figure}
At first glance, we note that instability of UCV has strong bad consequences when compared to the best method for large values of $n$.  As explained for instance in \cite{Heidenreich2013}, it is well-known that  UCV "leads to a small bias but large variance" and "often breaks down for large samples".
 This synthetic figure shows that for small values of $n$, PCO achieves nice performances and is very competitive. It is also the case for some other methods (UCV, SJste and SJdpi), but PCO clearly outperforms any other methodology for large datasets. This result is quite surprising since PCO is not based on asymptotic approximations. 

\subsection{Numerical comparison for multivariate density estimation}\label{sec:nummD}

For multivariate data, we perform selection by usual kernel methods  by using  the \texttt{ks} package of \texttt{R} \cite{Rks}. 
Different sample sizes $n\in \{100, 1000, 10000\}$ with the Gaussian kernel have been tested.

\subsubsection{Diagonal bandwidth matrices}\label{sec:diag}
In this section, we compare PCO with UCV, SCV and PI. For each methodology,  the bandwidth matrix is chosen among a set of diagonal matrices. More precisely, the diagonal terms are built from a rescaled Sobol sequence such that each of them is larger than $\frac{||K||_{\infty}}{n^{1/d}}$ and smaller than 1. The mean over 20 samples of the square root of the  ISE is given in Table \ref{tab:moy_ISE_D2_diag} for bivariate data, in Table \ref{tab:moy_ISE_D3_diag} for trivariate data and in Table \ref{tab:moy_ISE_D4_diag} for 4-dimensional data (see Appendix~\ref{Appendix:ISE}). We also provide synthetic graphs to outline comparisons between each estimator. More precisely, Figure~\ref{boxplots:diag} displays bloxplots of the ratio $\frac{\overline{ISE}^{1/2}_{\rm meth}}{\min_{\rm meth} \overline{ISE}^{1/2}_{\rm meth}}$ for each methodology over our benchmark densities for $d=2,3,4$ and for different sample sizes. We also provide simple summary graphs in Figure~\ref{fig:score_vs_n_diag}, in the same spirit as Figure~\ref{fig:score_vs_n_1D}.
\begin{figure}
\includegraphics[scale=0.20]{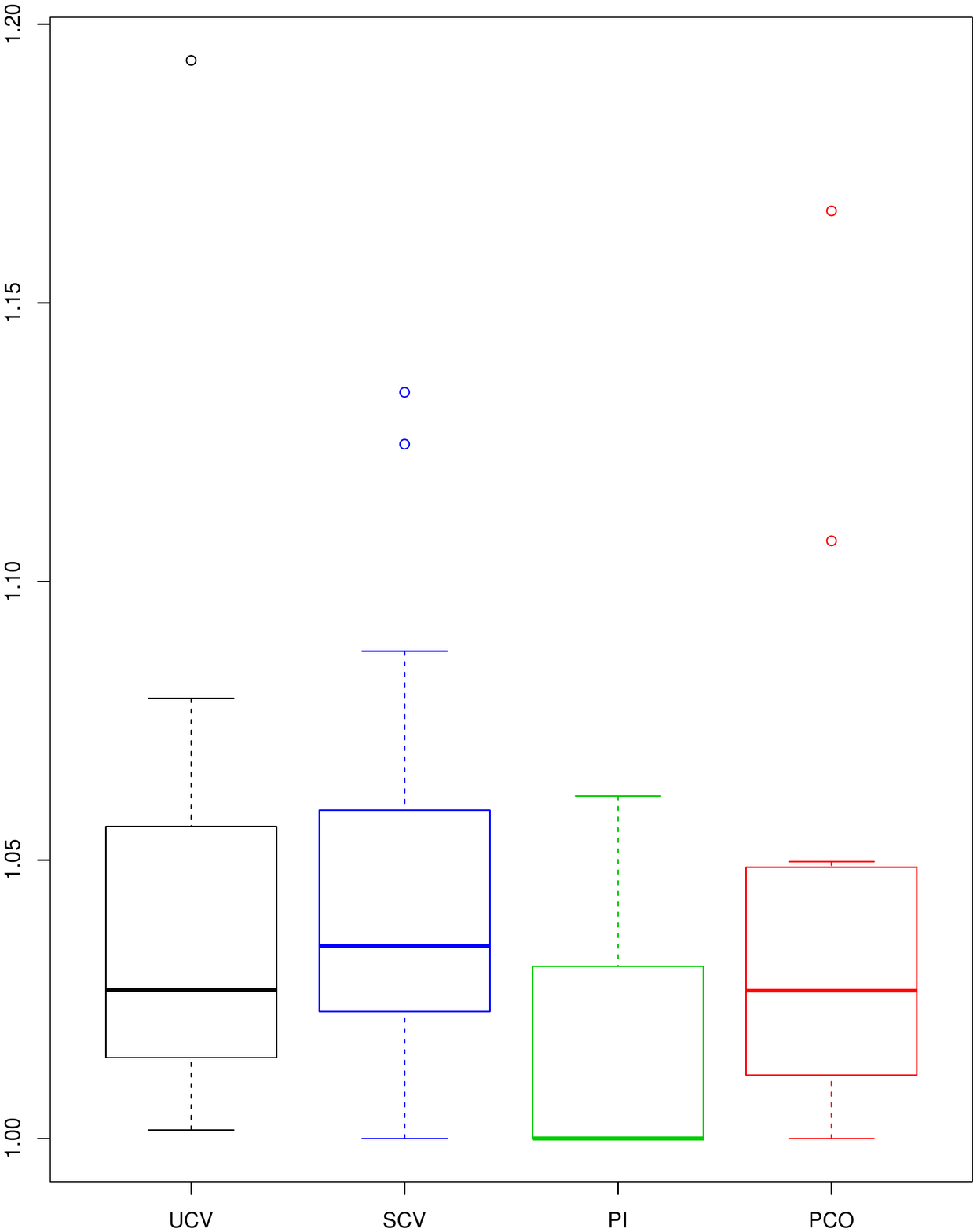}
\includegraphics[scale=0.20]{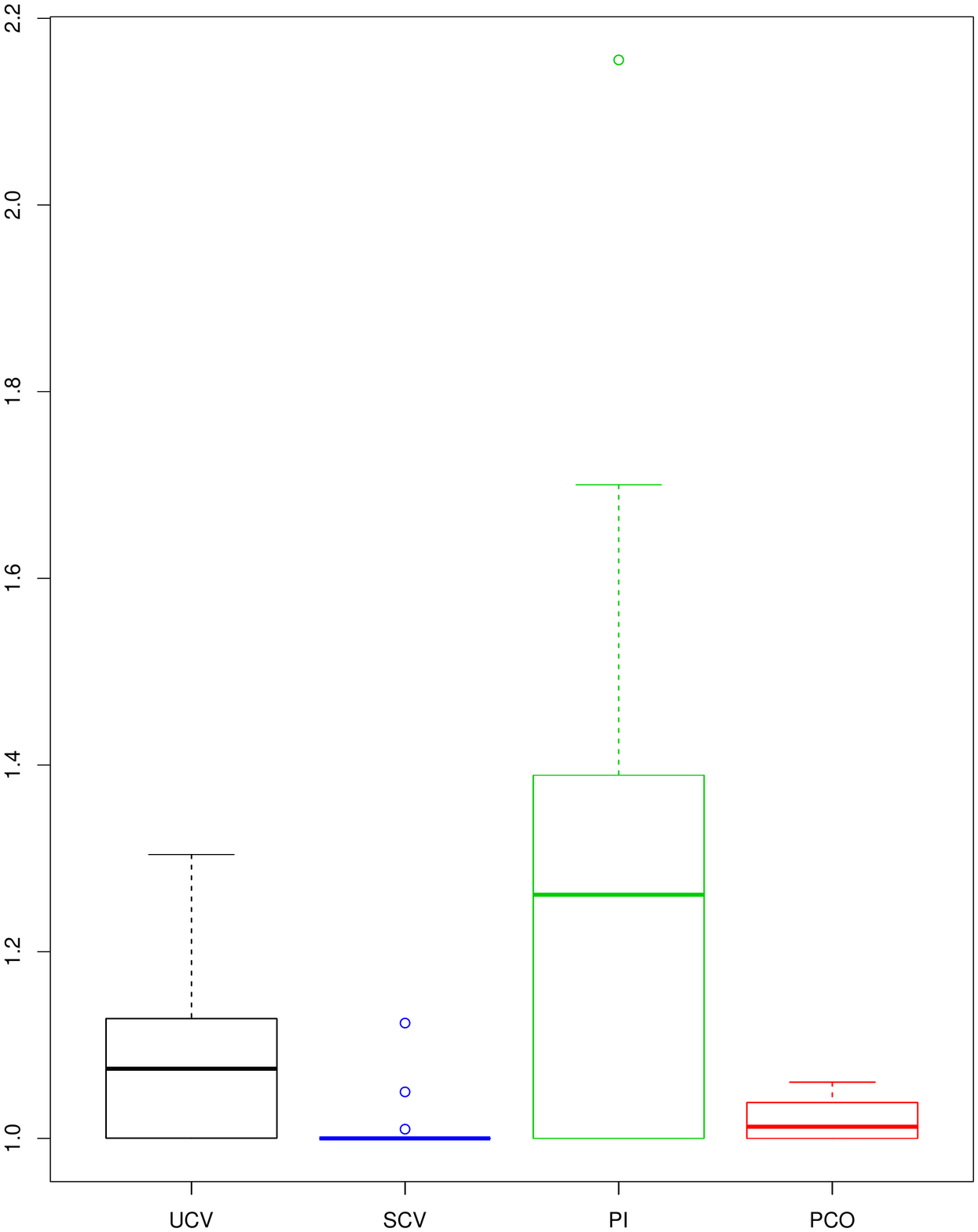}
\includegraphics[scale=0.20]{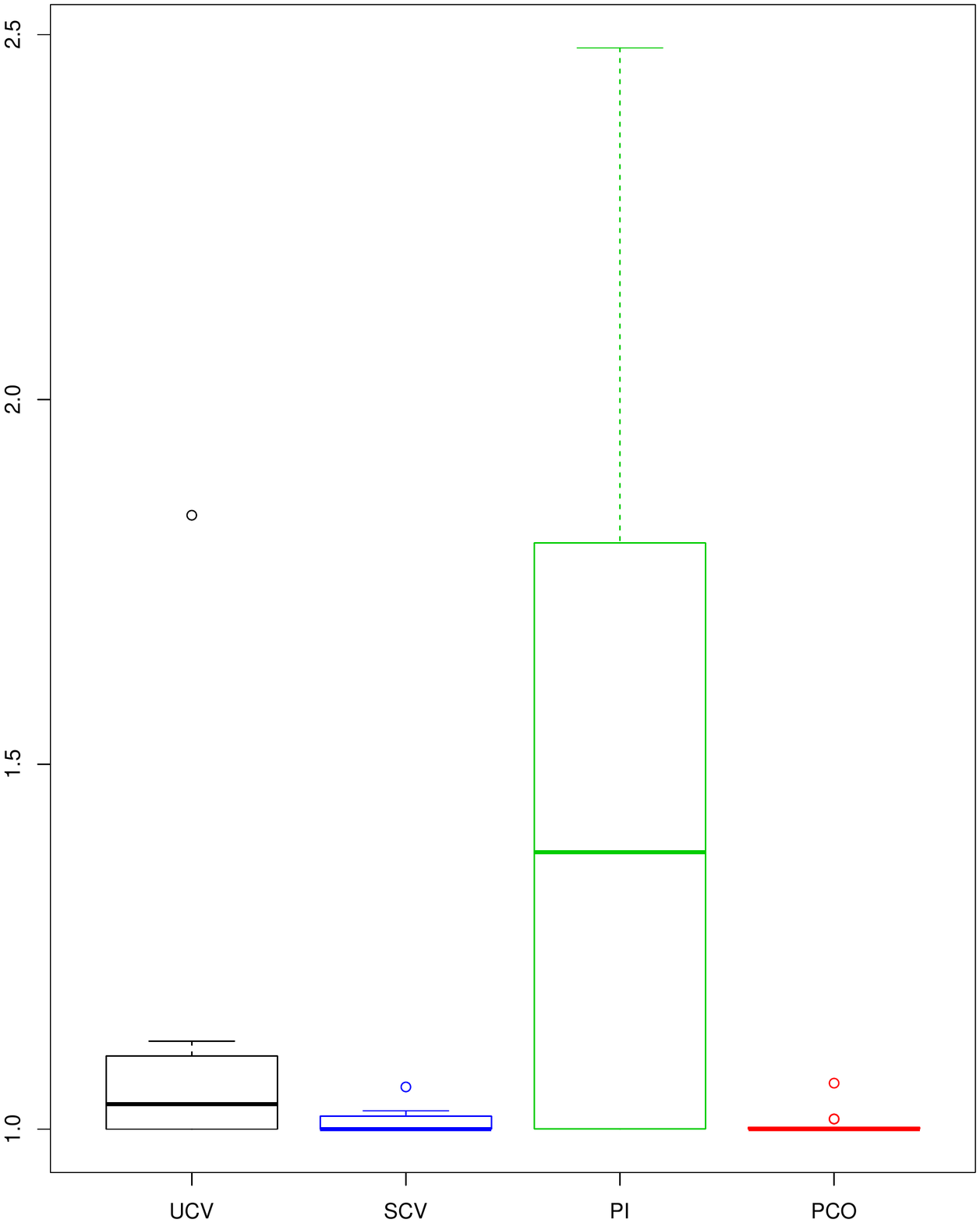}
\\
\includegraphics[scale=0.20]{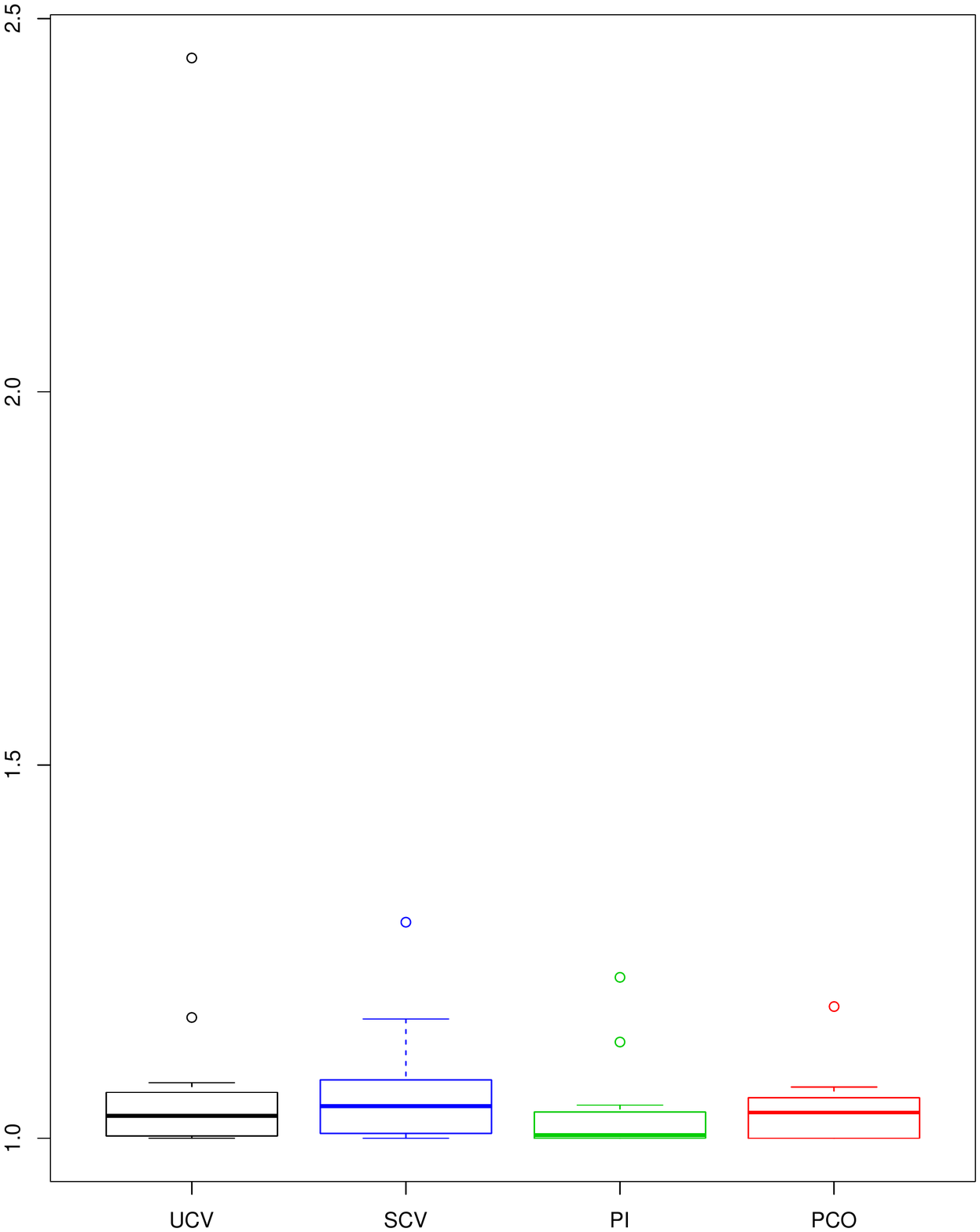}
\includegraphics[scale=0.20]{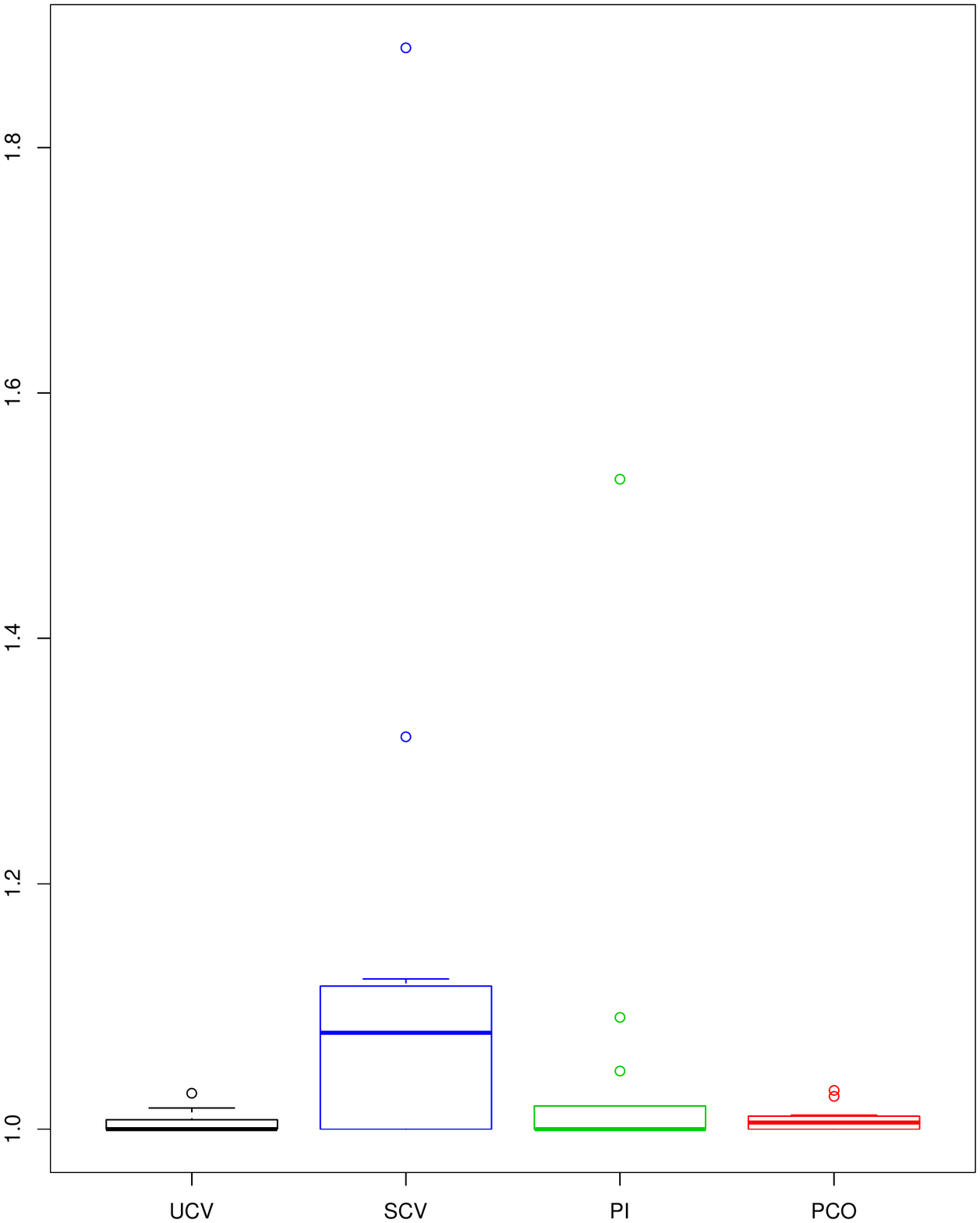}
\includegraphics[scale=0.20]{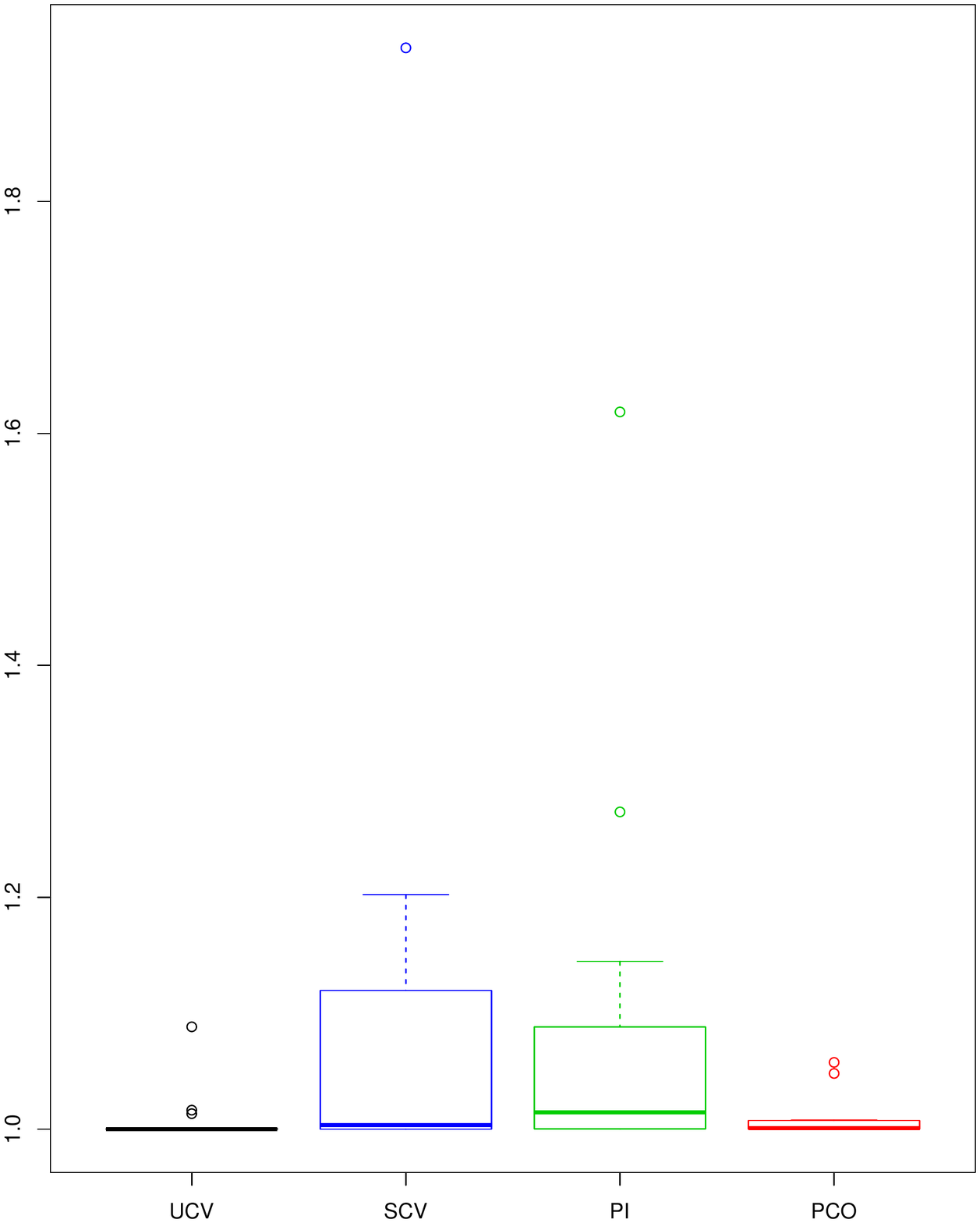}
\\
\includegraphics[scale=0.20]{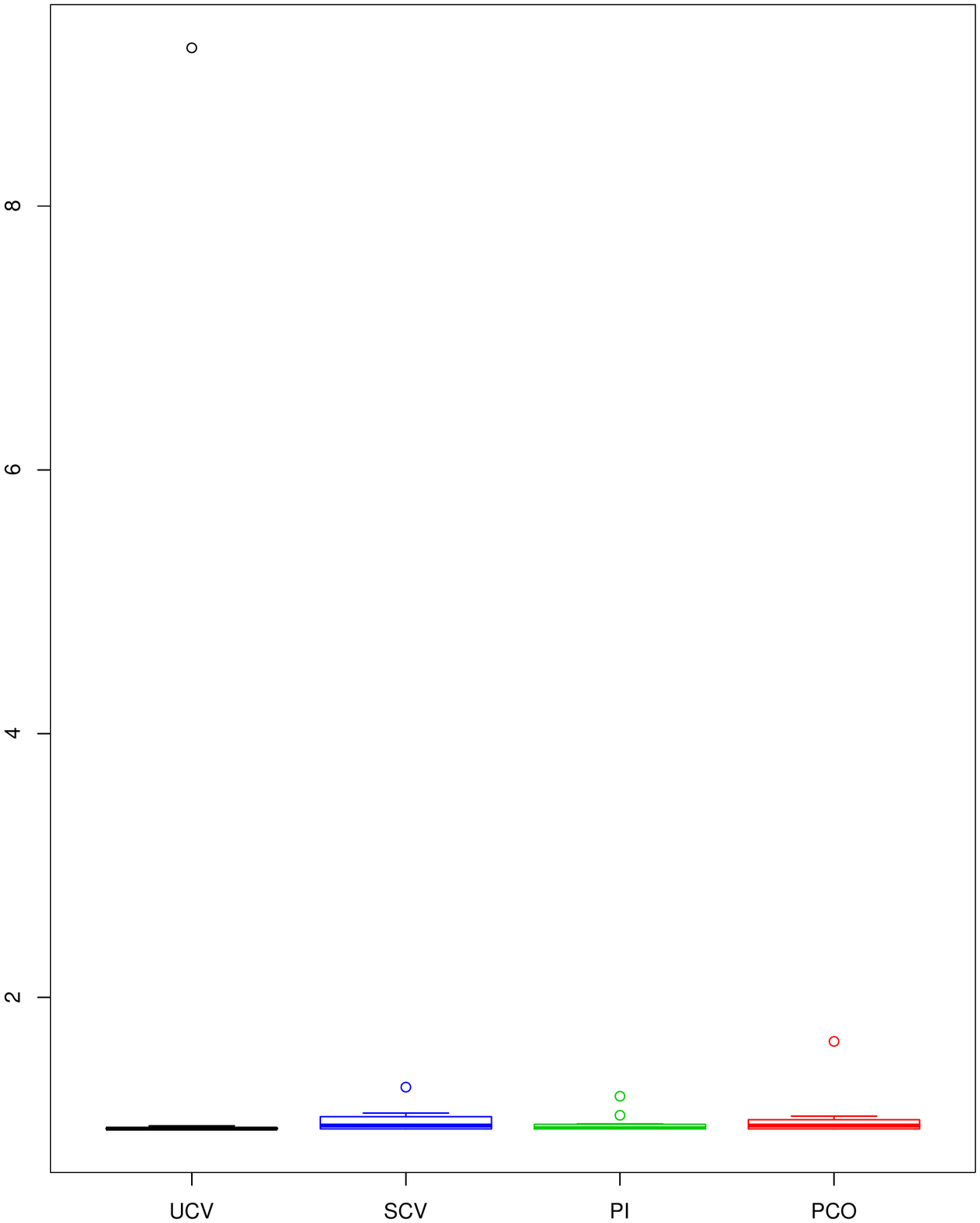}
\caption{Boxplots of the ratio $\frac{\overline{ISE}^{1/2}_{\rm meth}}{\min_{\rm meth} \overline{ISE}^{1/2}_{\rm meth}}$ over the 14 test densities described in Tables \ref{tab:PDFtest2D}, \ref{tab:PDFtest3D} and \ref{tab:PDFtest4D} for the diagonal case. First row: $n=100$; second row: $n=1000$; third raw: $n=10000$. First column: $d=2$; second column: $d=3$; third column: $d=4$.}\label{boxplots:diag}
\end{figure} 
\begin{figure}
	\centering
	\subfloat[Bivariate data.\label{fig:1a}]{\includegraphics[trim=0 0 0 50, clip=true, width=0.33\textwidth, valign=c, ]{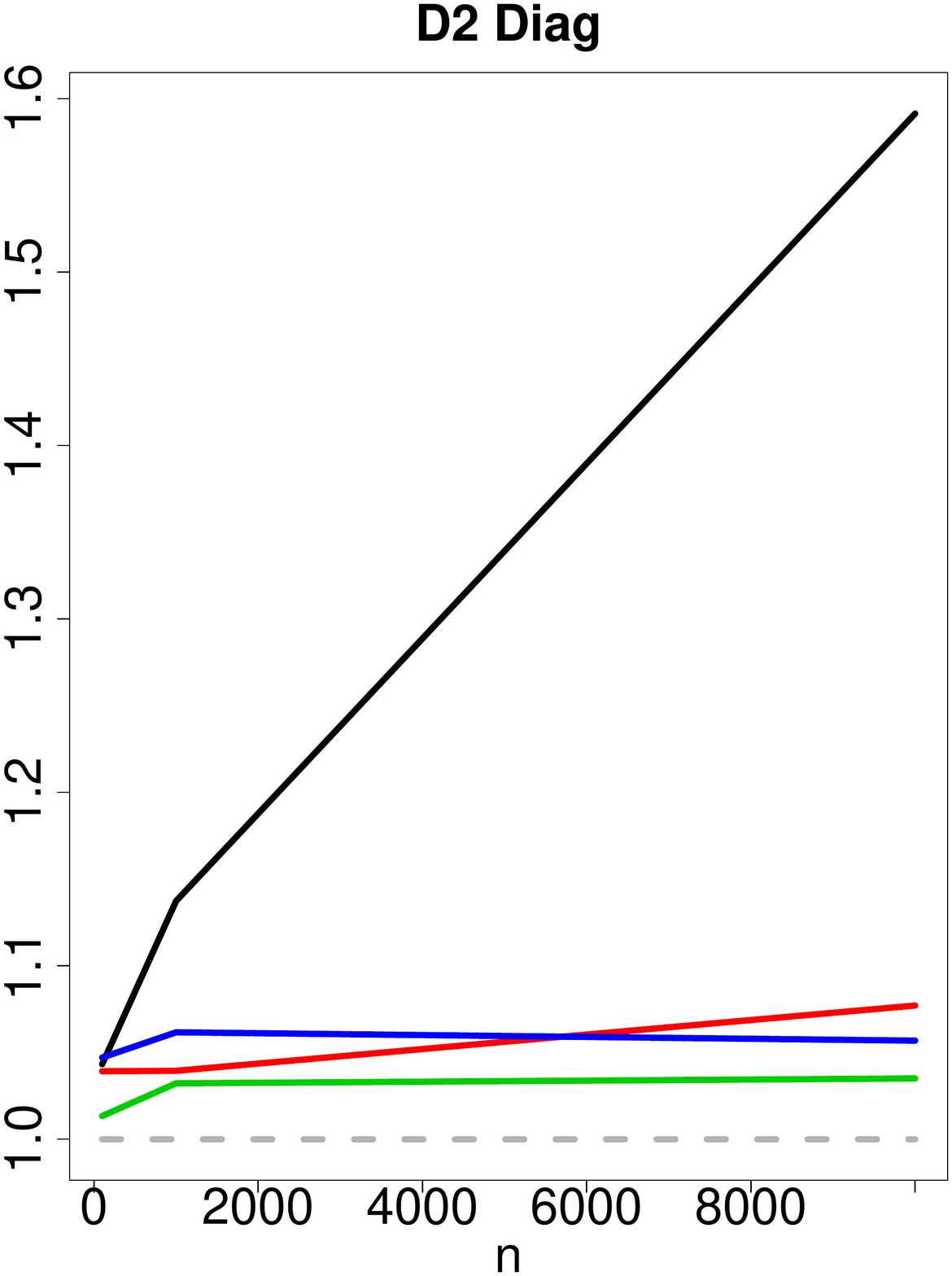}}
	\subfloat[Trivariate data.\label{fig:1b}] {\includegraphics[trim=0 0 0 50, clip=true, width=0.33\textwidth, valign=c]{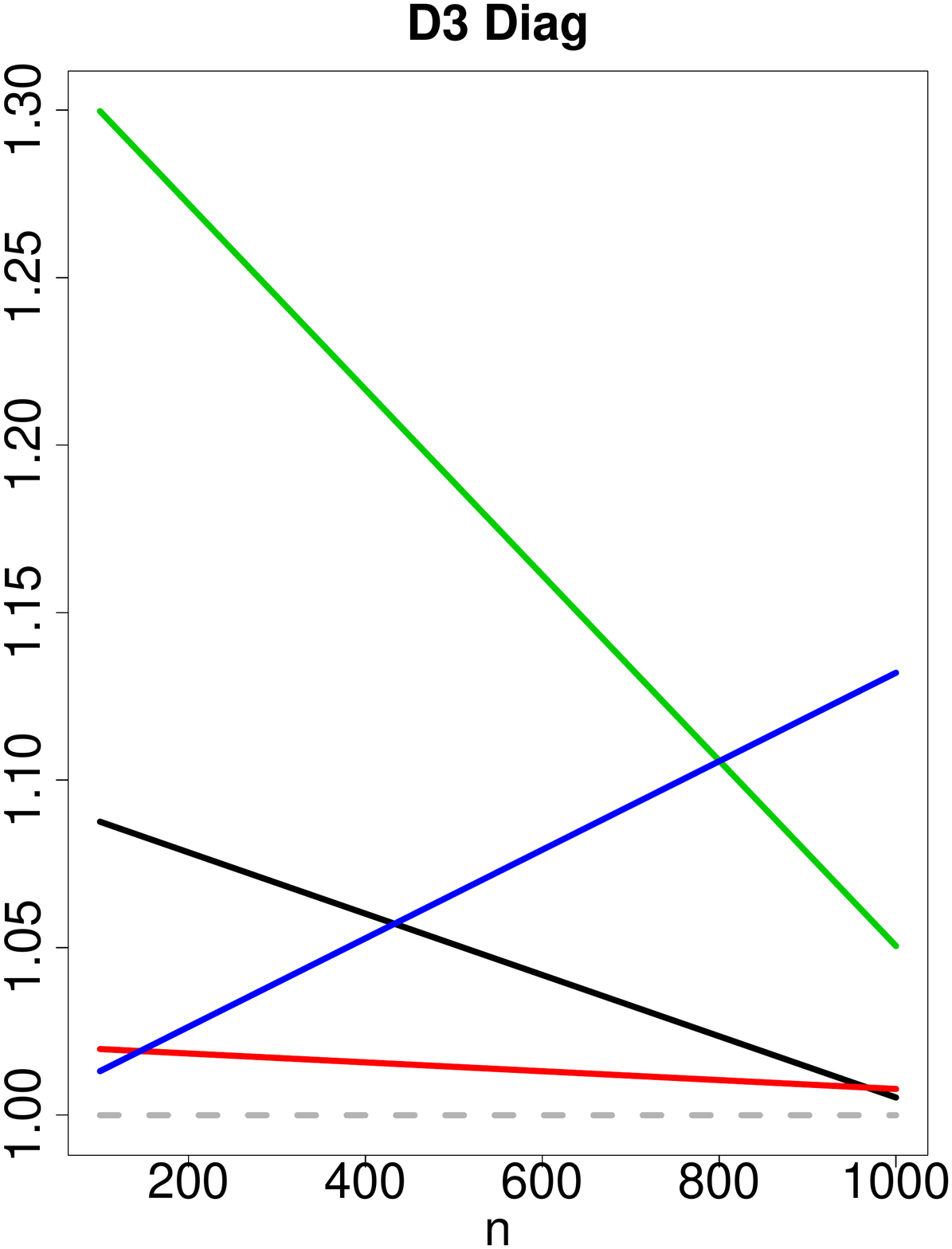}}
	\subfloat[Four dimensional data.\label{fig:1c}]{\includegraphics[trim=0 0 0 50, clip=true, width=0.33\textwidth, valign=c]{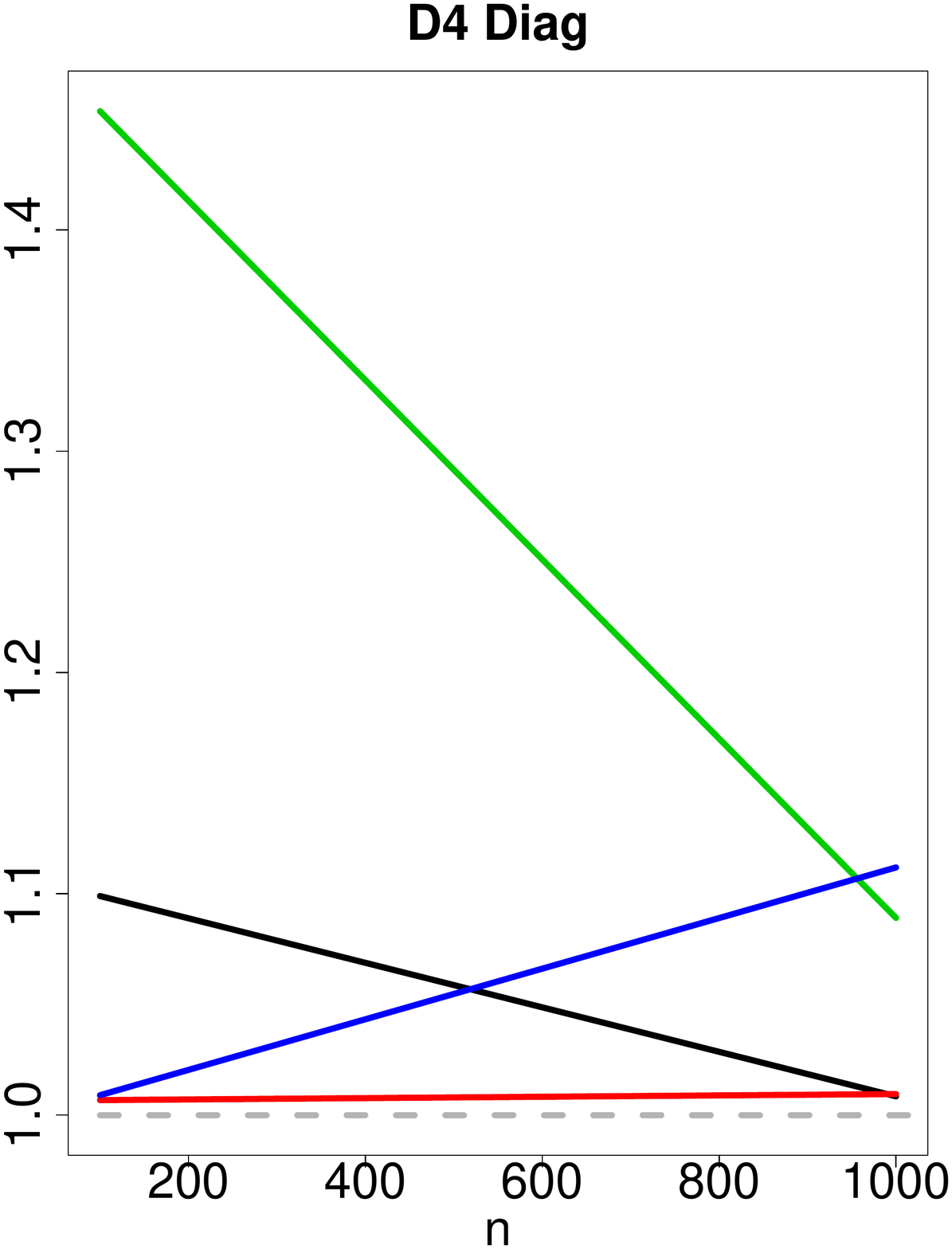}}\\
	\subfloat{\includegraphics[width=0.33\textwidth, valign=c]{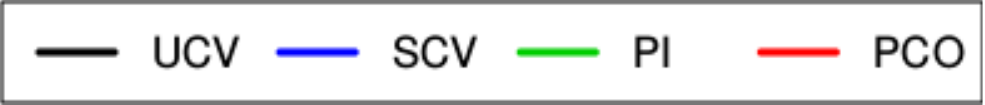}}
		\caption{Graph of the mean over all densities $f$ of the ratio of 
$r_{{\rm meth}/\min}(f):=\frac{\overline{ISE}^{1/2}_{\rm meth}(f)}{\min_{\rm meth}\overline{ISE}^{1/2}_{\rm meth}(f)}$ for ${\rm meth} \in \{\text{UCV}, \text{SCV}, \text{PI}, \text{PCO}\}$ versus the sample size.}
		\label{fig:score_vs_n_diag}
\end{figure}

Analyzing results of Table~\ref{tab:moy_ISE_D2_diag} devoted to the dimension $d=2$, we note that, as expected, performances of all methodologies improve significantly when $n$ increases for all benchmark densities, except for UCV whose performances deteriorate for D and SK+.  Actually, as explained in Section~\ref{sec:num1D}, UCV suffers from instability leading to break down issues for large datasets.  PCO achieves very satisfying performances except for Sk+ and for UG when $n=100$. It is also the case to a less extent for CG and U when $n=10000$. These conclusions are in line with those of Section~\ref{sec:num1D}. Even if PI achieves bad results for AF (for which PCO or UCV are preferable), it remains the best methodology for bivariate data when diagonal bandwidths are considered. See the left columns of Figures~\ref{boxplots:diag} and \ref{fig:score_vs_n_diag}.

Now, let us consider 3 and 4-dimensional data for which the studied sample size is not larger than 1000 to avoid too expensive computational time for all methodologies. Tables~\ref{tab:moy_ISE_D3_diag} and~\ref{tab:moy_ISE_D4_diag} show that all kernel strategies suffer from the curse of dimensionality for a non-negligible set of benchmark densities. See the results for irregular spiky densities Sk+, D, K and AF.  {Note that these densities have also strong correlations between components of the $X_i$'s}.  Whereas PI achieves good results for $d=2$, it is no longer the case for $d\geq 3$ and $n=100$ due to many stability issues. This can be explained by the fact that the pilot bandwidth is proportional to identity, which is not convenient for many densities. Furthermore, whereas for $d=2$, a closed form for $\hat H_{PI}$ exist, it not the case for $d\geq 3$ and optimization algorithms are necessary. When comparing methodologies between them, by analyzing Figures~\ref{boxplots:diag} and \ref{fig:score_vs_n_diag}, we observe that relative performances of PI and UCV improve when $n$ increases. When $n=1000$, both PCO and UCV are very competitive, whereas SCV has to be avoided.
However, PCO remains the best methodology for any density and for $n\in\{100,1000\}$, except for two cases:  Sk and $n=100$ when $d=3$ and U and $n=100$, when $d=4$ (see Tables~\ref{tab:moy_ISE_D3_diag} and~\ref{tab:moy_ISE_D4_diag}). 

To summarize, this simulation study shows that when the ratio $n/d$ is large enough, PI is a very good strategy when considering diagonal bandwidth matrices.  These graphs emphasize the remarkable property of stability of PCO. In particular, for this reason, PCO seems to be the best kernel strategy and is preferable to UCV, SCV and PI as soon as $d\geq 3$ as soon as very few is known about the density to estimate (see for instance the synthetic Figure~\ref{fig:score_vs_n_diag}). 

%
%
%
%
%
\subsubsection{Symmetric definite positive bandwidth matrices}
In this section, we investigate possible improvements of PCO for the general case, namely by considering a suitable subset of symmetric definite positive bandwidth matrices. The goal is then to detect hidden correlation structures of components of the $X_i$'s and our strategy is based on the eigendecomposition of the covariance matrix of the data. For this purpose, let us denote $\hat{\Sigma}$ the empirical covariance matrix of the data and $\hat{\Sigma} = P^{-1}D_{\hat{\Sigma}}P$ its eigendecomposition, where $D_{\hat{\Sigma}}$ is diagonal. Then, we consider ${\mathcal H}$, the set of matrices of the form $P^{-1}DP$, where $D$ is diagonal and the diagonal terms are built from a rescaled Sobol sequence such that each of them is larger than $\frac{||K||_{\infty}}{n^{1/d}}$ and smaller than 1.

We compare PCO with all other methodologies based on symmetric definite positive bandwidth matrices. The mean over 20 samples of the square root of the  ISE is given in Table \ref{tab:moy_ISE_D2_full_H4} for bivariate data, in Table \ref{tab:moy_ISE_D3_full_H4} for trivariate data and in Table \ref{tab:moy_ISE_D4_full_H4} for 4-dimensional data (see Appendix~\ref{Appendix:ISE}), whereas Figure~\ref{boxplots:full_H4} (resp. Figure~\ref{fig:score_vs_n_full}) is the analog of  Figure~\ref{boxplots:diag} (resp. Figure~\ref{fig:score_vs_n_diag}).

\begin{figure}
\includegraphics[scale=0.21]{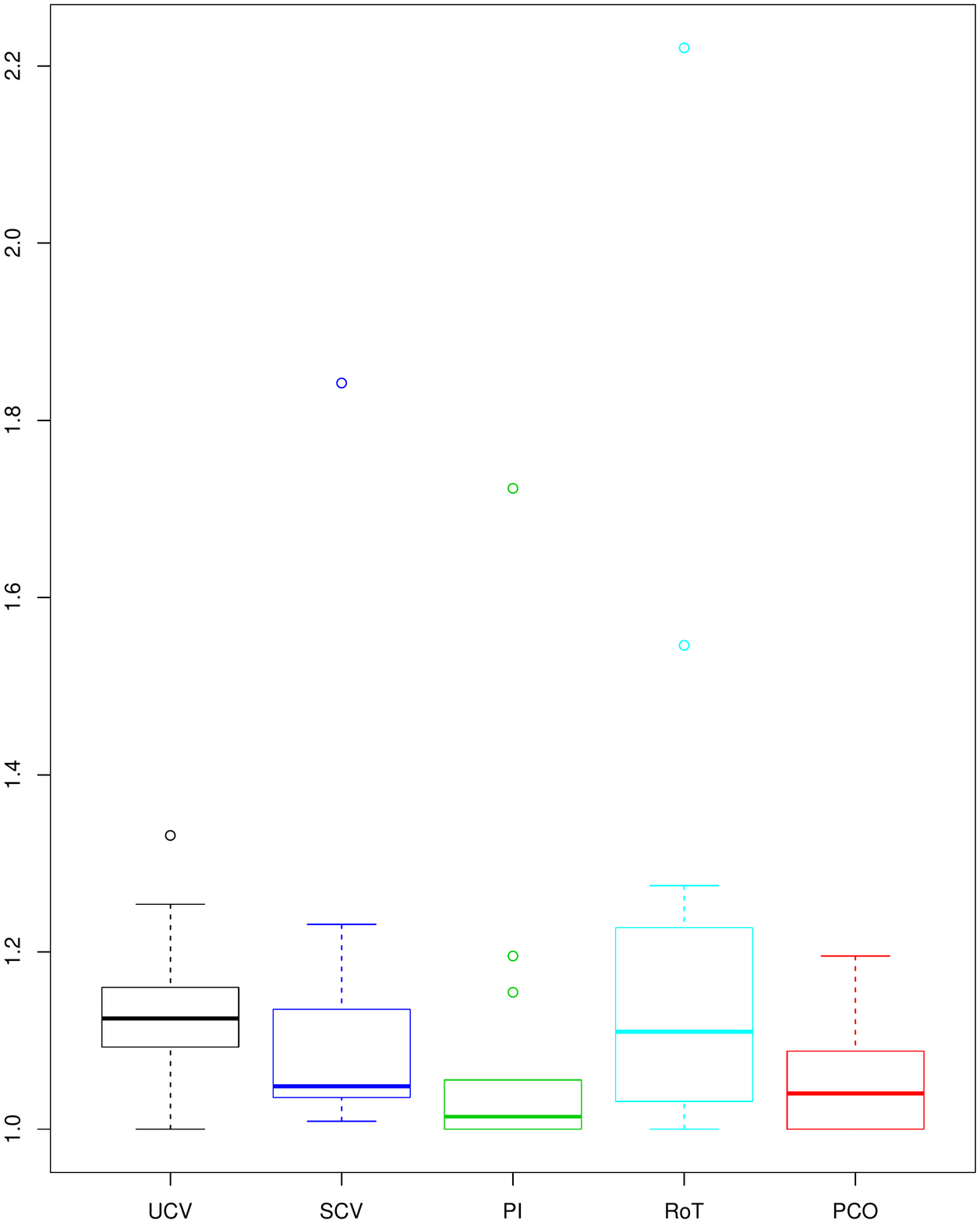}
\includegraphics[scale=0.21]{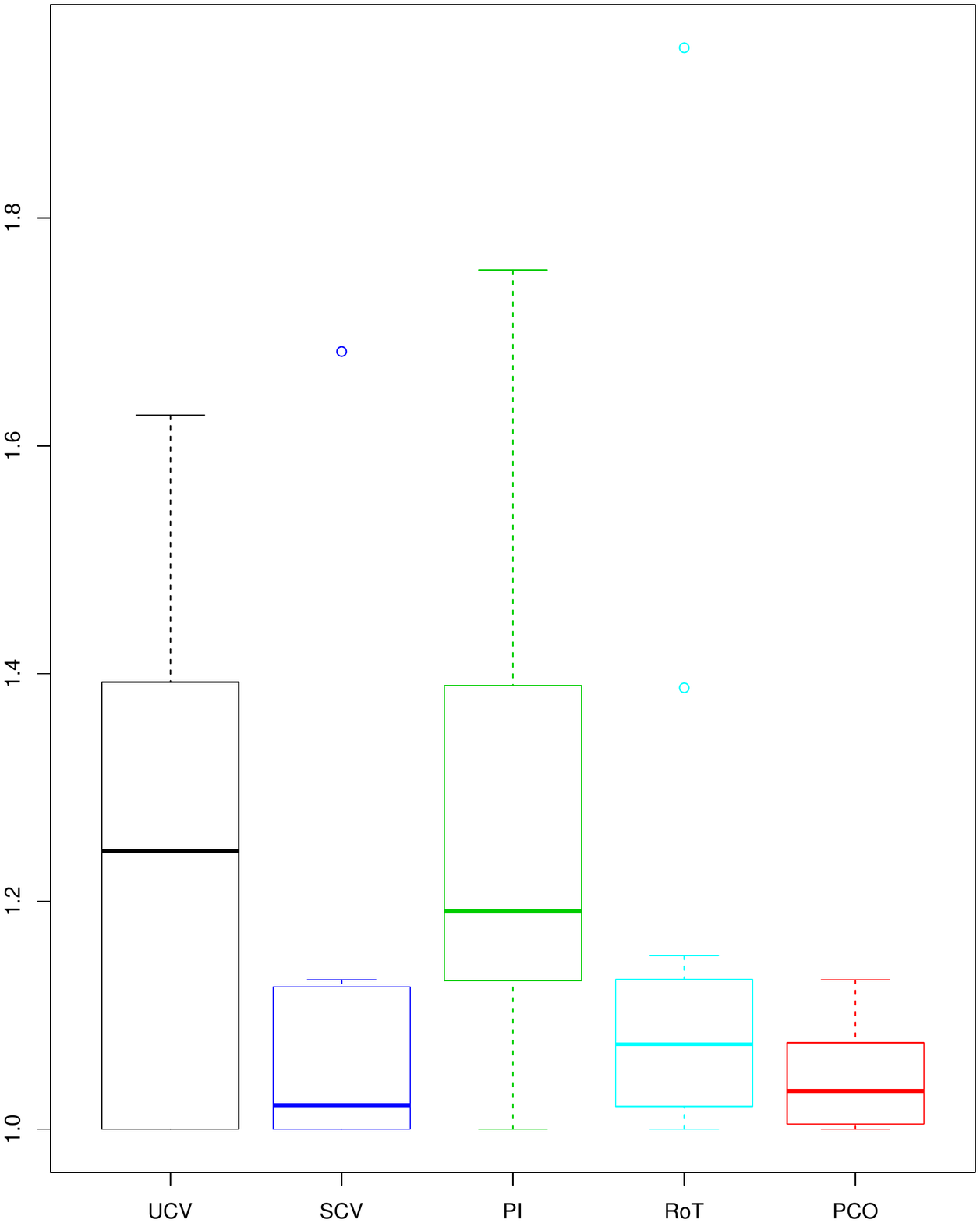}
\includegraphics[scale=0.21]{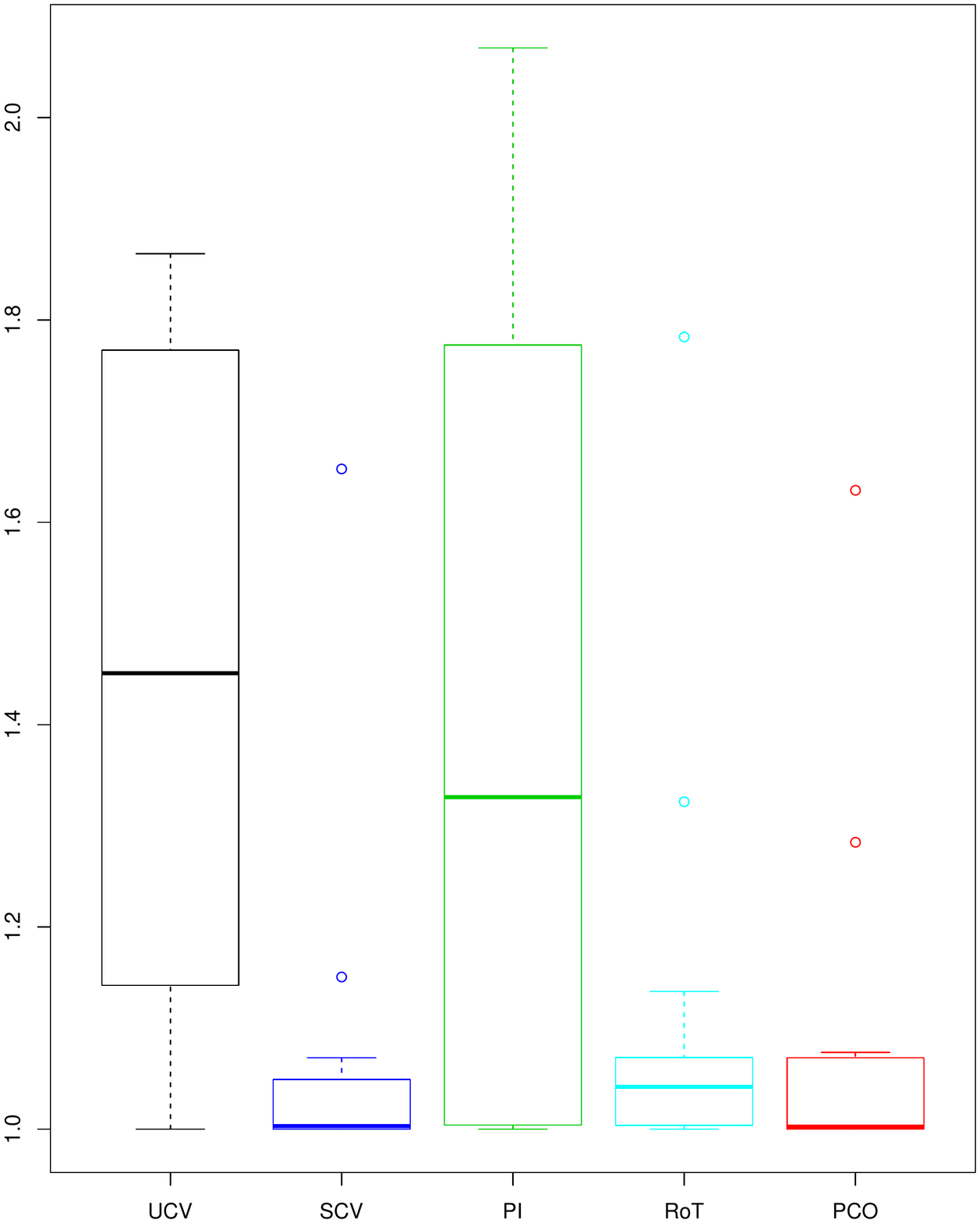}
\\
\includegraphics[scale=0.21]{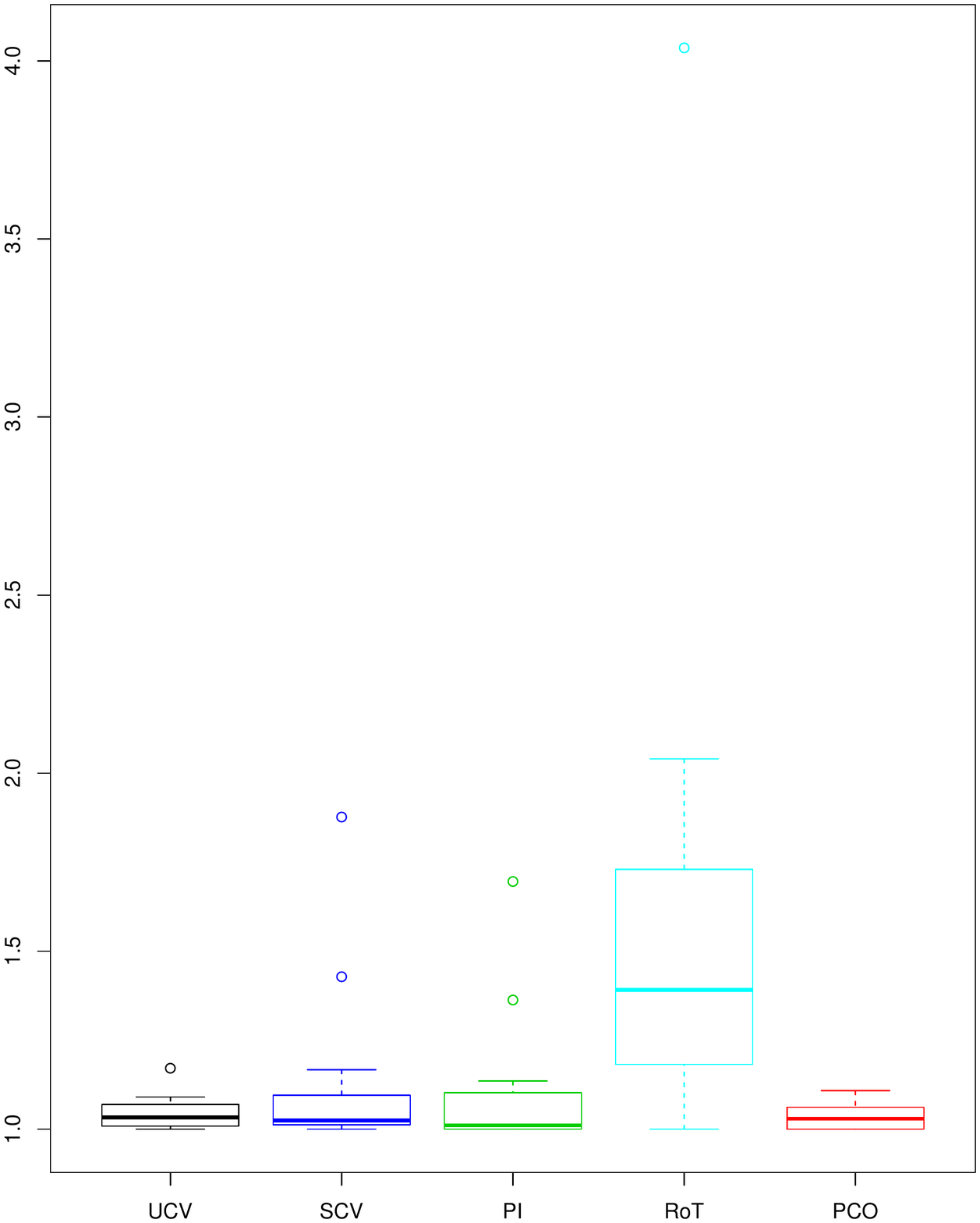}
\includegraphics[scale=0.21]{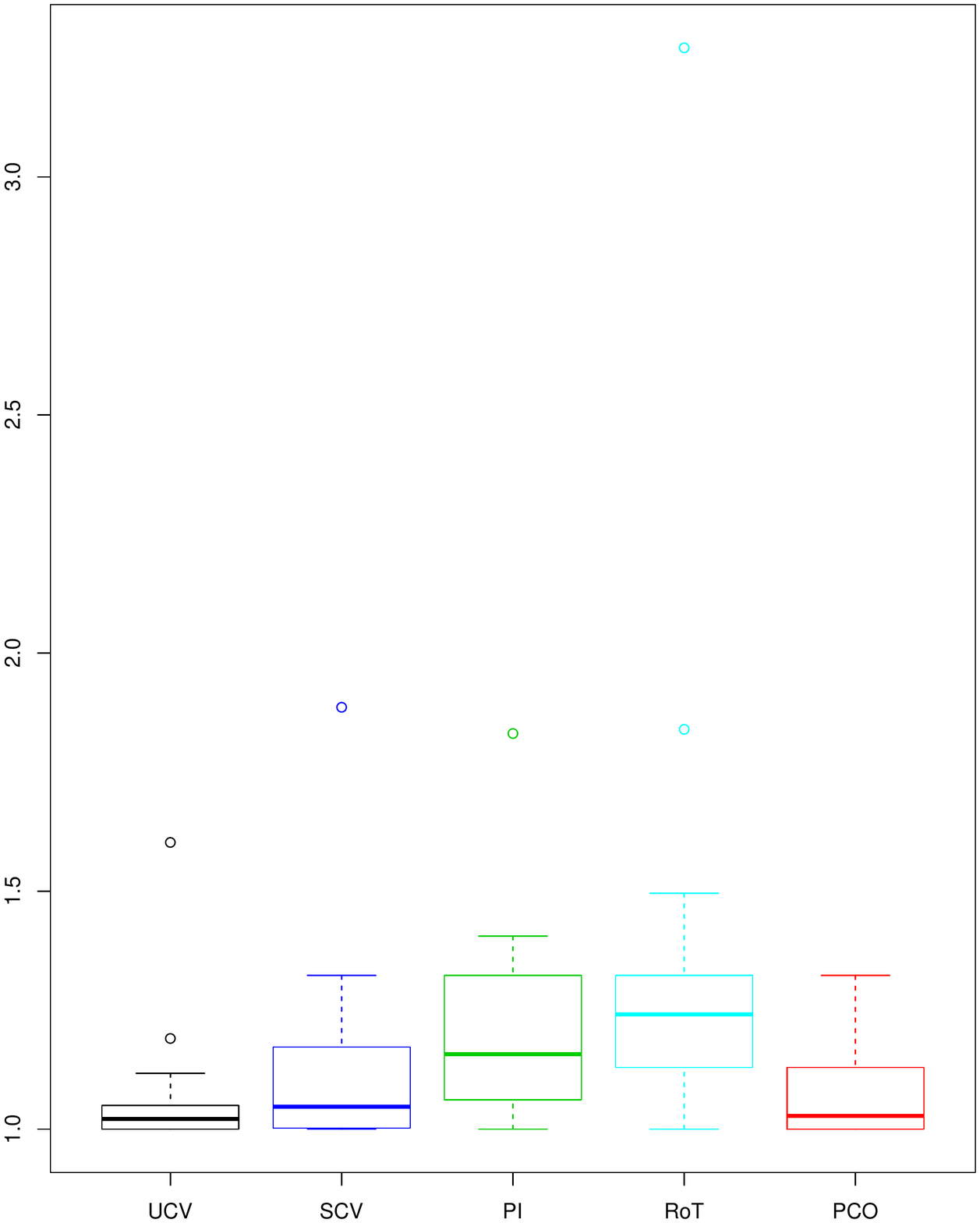}
\includegraphics[scale=0.21]{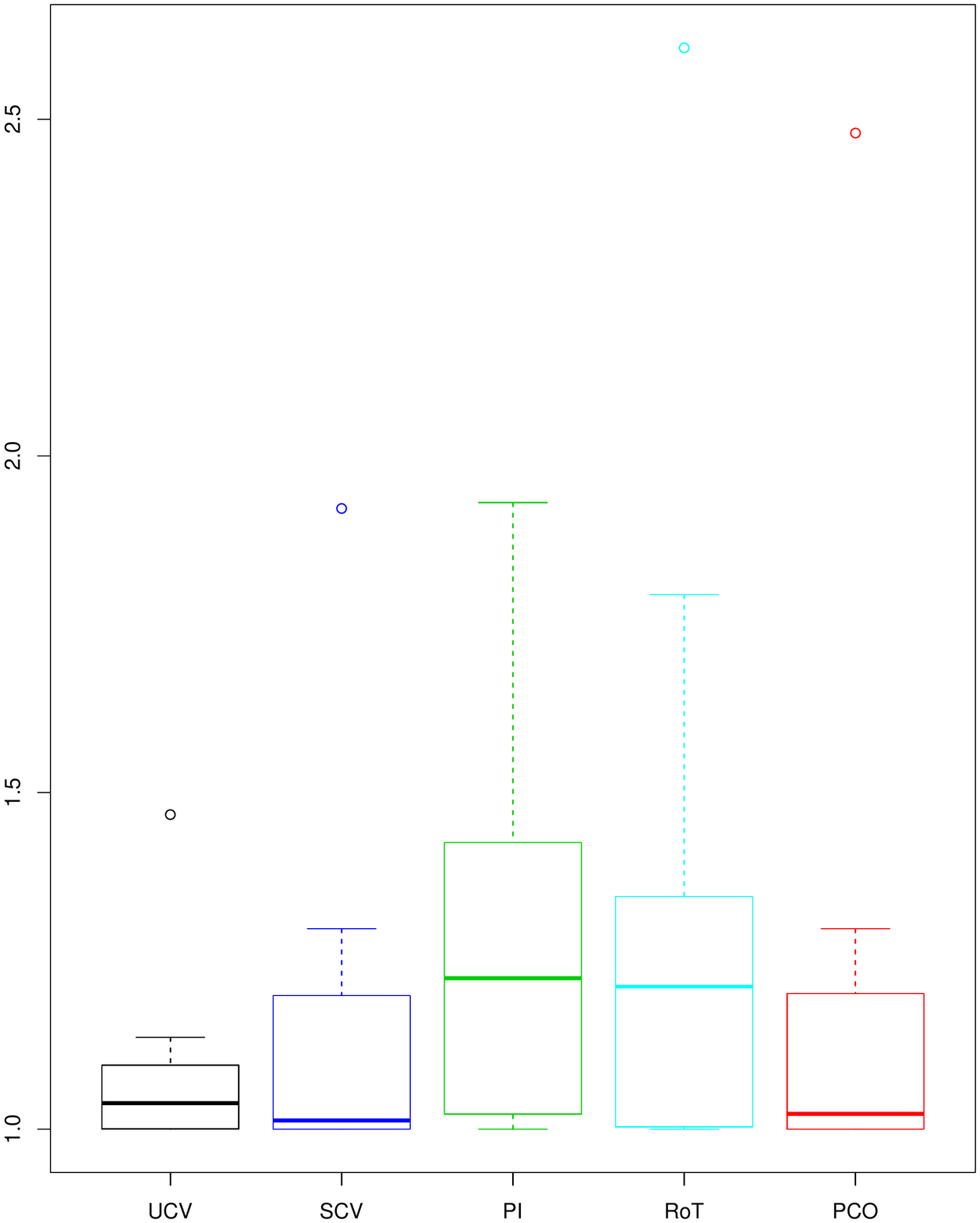}
\\
\includegraphics[scale=0.21]{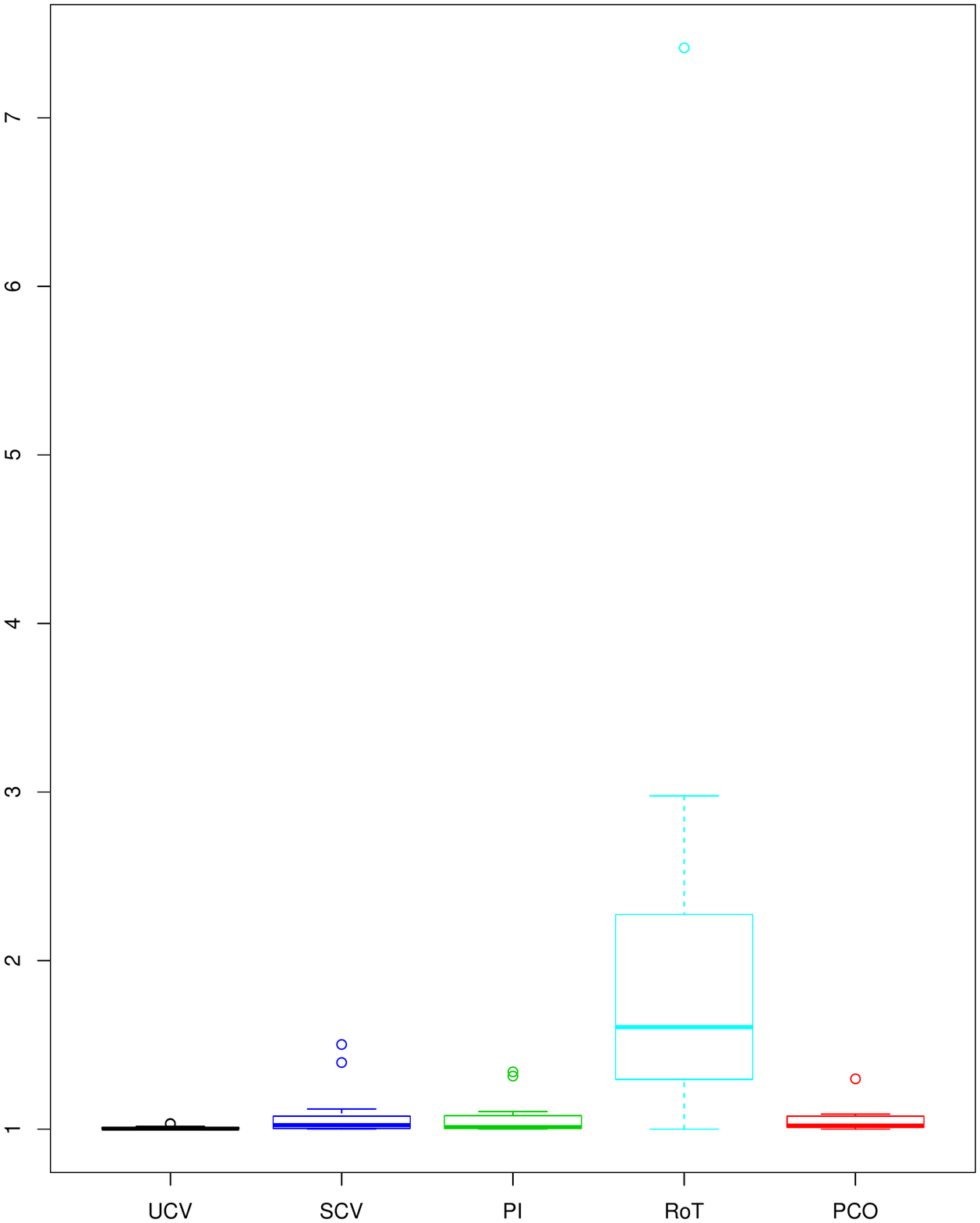}
\caption{Boxplots of the ratio $\frac{\overline{ISE}^{1/2}_{\rm meth}}{\min_{\rm meth} \overline{ISE}^{1/2}_{\rm meth}}$ over the 14 test densities described in Tables \ref{tab:PDFtest2D}, \ref{tab:PDFtest3D} and \ref{tab:PDFtest4D} for the general case. First row: $n=100$, second row: $n=1000$, third raw: $n=10000$. First column: $d=2$, second column: $d=3$, third column: $d=4$.}\label{boxplots:full_H4}
\end{figure}

\begin{figure}
	\centering
	\subfloat[Bivariate data.\label{fig:2a}]{\includegraphics[trim=0 0 0 50, clip=true, width=0.33\textwidth, valign=c, ]{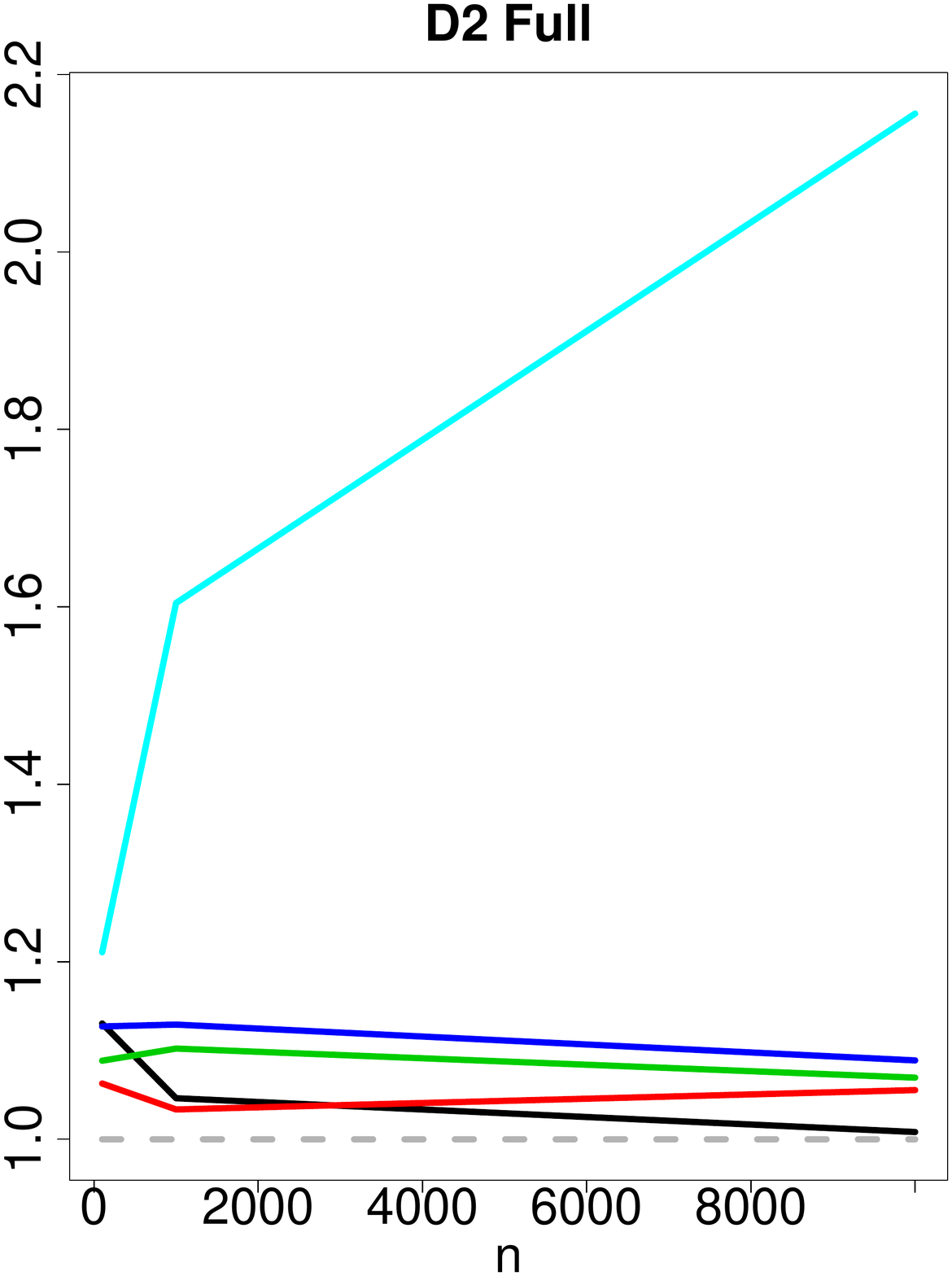}}
\subfloat[Trivariate data.\label{fig:2b}] {\includegraphics[trim=0 0 0 50, clip=true, width=0.33\textwidth, valign=c]{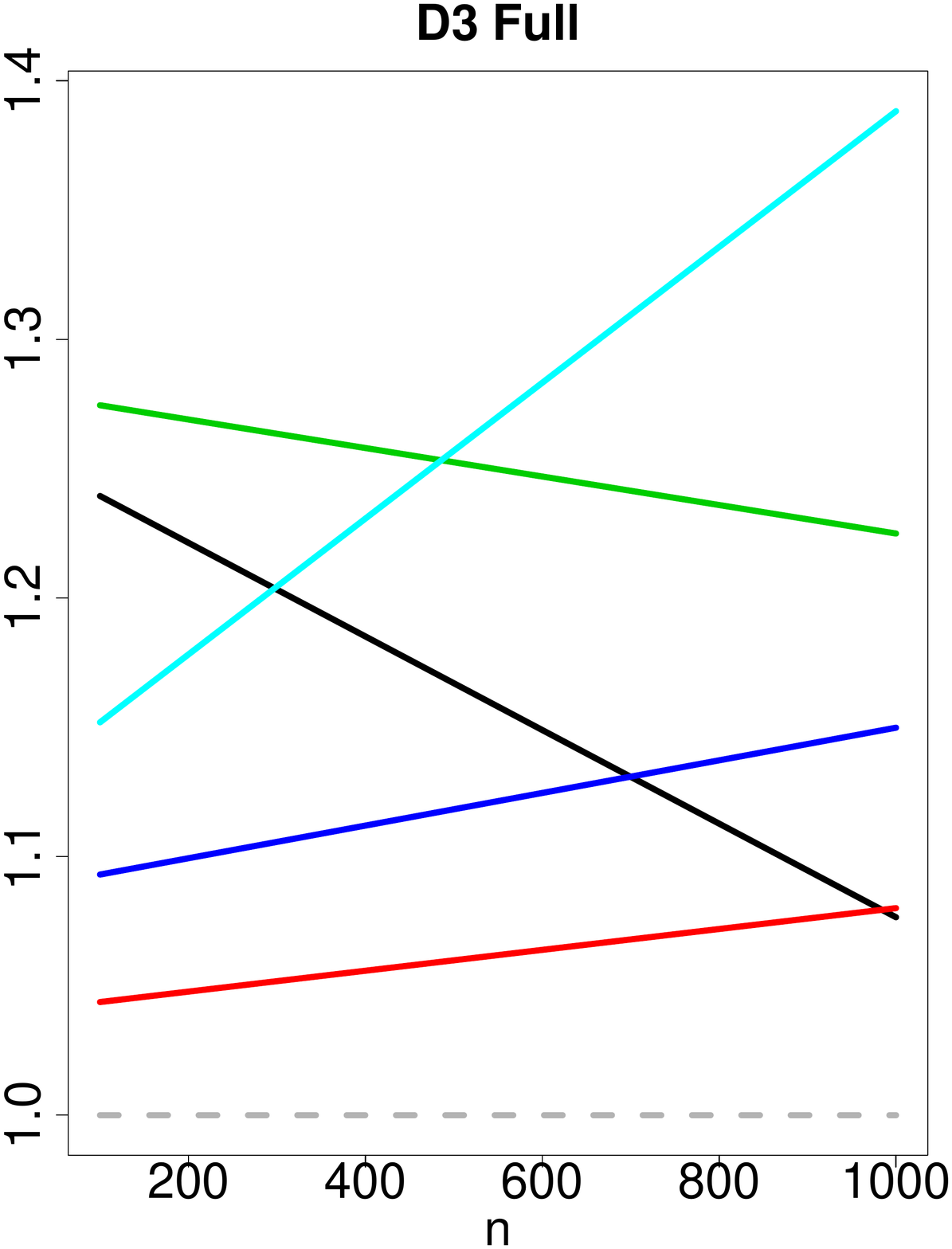}}
\subfloat[Four dimensional data.\label{fig:2c}]{\includegraphics[trim=0 0 0 50, clip=true, width=0.33\textwidth, valign=c]{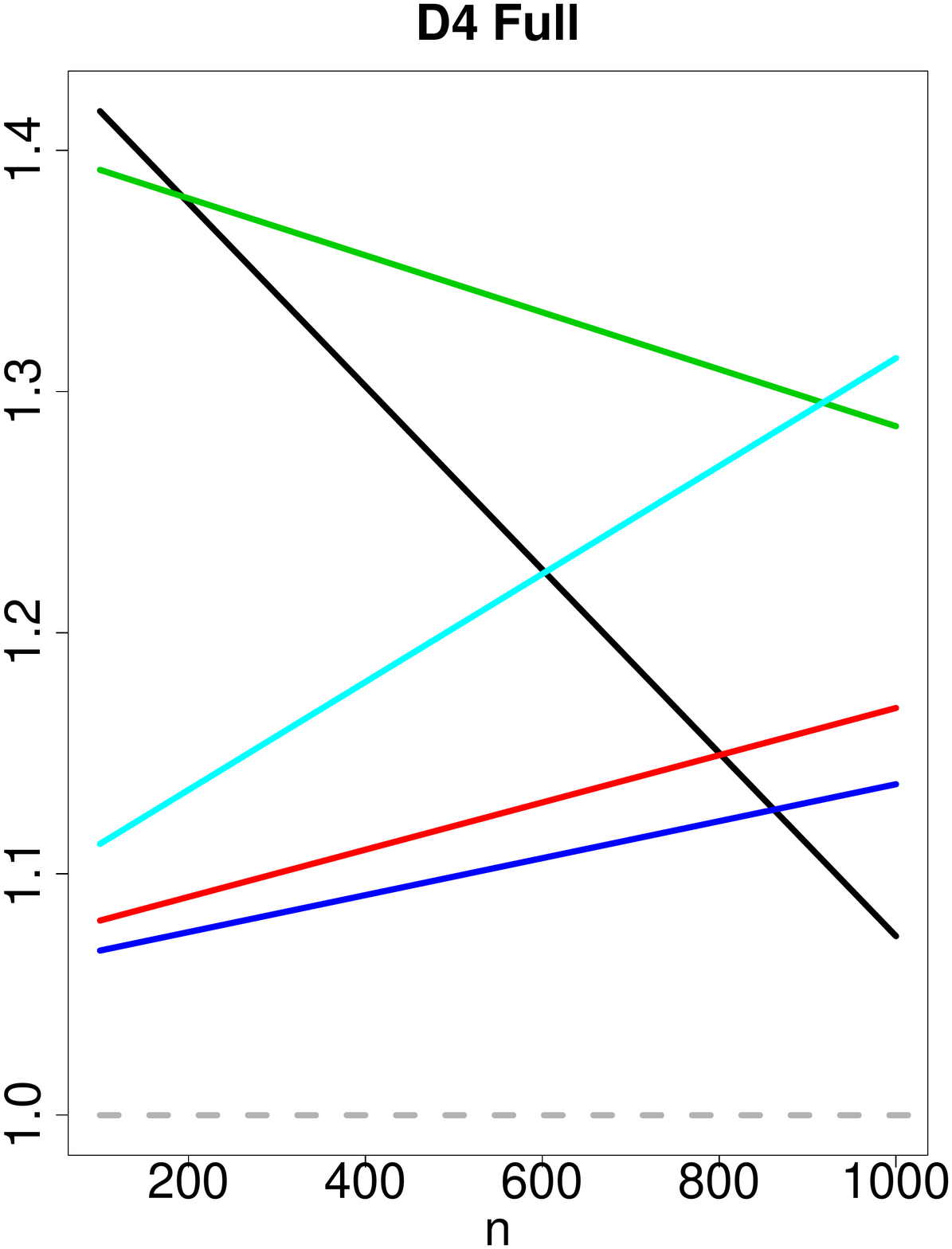}}\\
\subfloat{\includegraphics[width=0.33\textwidth, valign=c]{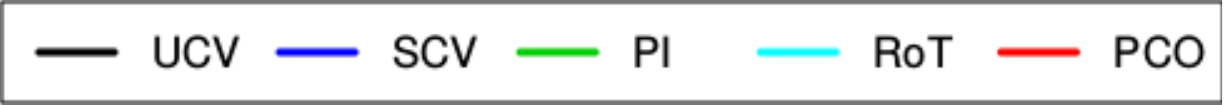}}
		\caption{Graph of the mean over all densities $f$ of the ratio of 
$r_{{\rm meth}/\min}(f):=\frac{\overline{ISE}^{1/2}_{\rm meth}(f)}{\min_{\rm meth}\overline{ISE}^{1/2}_{\rm meth}(f)}$ for ${\rm meth} \in \{\text{UCV}, \text{SCV}, \text{PI}, \text{RoT}, \text{PCO}\}$ versus the sample size.}
		\label{fig:score_vs_n_full}
\end{figure}

Let us analyse values of the risk provided by Tables \ref{tab:moy_ISE_D2_full_H4}, \ref{tab:moy_ISE_D3_full_H4} and \ref{tab:moy_ISE_D4_full_H4} to evaluate the gain or the loss of this new setting. First of all, we observe that for $d\geq 3$, bad results obtained by diagonal bandwidths for estimating Sk+, D, K and AF are not really improved. Secondly, as an illustrative example, let us consider purely Gaussian distributions. As expected, on the one hand, performances of PCO improve for CG as desired by using the matrix bandwidth parametrization for any value of $n$ and any value of $d$. It is also the case for most of other methodologies; note however two exceptions for PI (with $n=1000$) and UCV (with $n=100$) for the 4-dimensional cases. On the other hand, as also expected, there is no benefit from using kernel rules with non-diagonal bandwidth for estimating the  density UG. For PI and UCV, in some situations, results are even worse. More generally, we observe that results of PCO never deteriorate in this new setting with some clear improvements but only in few situations (for instance for $d=2$ and with benckmark densities Sk+, D and AF). Other procedures have mixed results with some deteriorations or some improvements. For instance, when $d=2$, the new setting improves results of SCV and PI for Sk+ but deteriorate for Sk. We can however observe that except for CG, when $d\geq 3$, results of UCV when $n=100$ (resp. PI when $n=1000$) never improve and even, in many situations, deteriorate. 

We now compare different methodologies by analyzing Figures~\ref{boxplots:full_H4} and \ref{fig:score_vs_n_full}. Hierarchy between strategies and conclusions can differ significantly from those of Section~\ref{sec:diag}. We first observe that PCO is still very stable, except for the case $d=4$ for which, in particular, estimation of Sk+ is much worse than for other strategies. RoT achieves nice results for the densities UG, CG and U (see Tables \ref{tab:moy_ISE_D2_full_H4}, \ref{tab:moy_ISE_D3_full_H4} and \ref{tab:moy_ISE_D4_full_H4} ) but this approach, which is very unstable, is outperformed by the other ones in particular when $n$ is large. As for the case of diagonal bandwidth matrices, PI is very satisfying for $d=2$ but suffers from the curse of dimensionality with poor stability properties as soon as $d/n$ is large. It is also the case for UCV but note that the latter outperforms all other strategies when $d/n$ is small. Besides, UCV is known for working well for  moderate sample size and "neither behaving well for rather small nor for rather large samples" \citep{Heidenreich2013} (in our context of dimension 3 or 4, $n=1000$ is rather a moderate sample size than a large one).
For many situations SCV and PCO have a similar behavior but for $d\geq 3$ and $n$ small, SCV is preferable, maybe due to over-smoothing properties of SCV. Note however that PCO is more stable than SCV for $d\leq 3$.

To summarize, these conclusions and numerical results show that in full generality, thanks to its stability properties, PCO is probably the best strategy to adopt for most of densities and for not too large values of $n$. 

\bigskip

\section{Conclusion}
As a general remark, we see from the boxplots resulting from our simulation studies that PCO has a stable behavior. In the univariate case, its performance is never far from optimal. In this sense simulations corroborate what was expected from theory and validates the choice of the tuning constant in the penalty term as its optimal asymptotic value which is equal to 1. This constant being tuned once for all, PCO becomes a ready to be used method which is further more easy to compute. In the multivariate case, the situation is a bit more involved. To summarize the results for multivariate data, the performance of PCO is always close to the optimal for a diagonal bandwidth. 
For full bandwidth this remains true in dimension 2 and also in dimension 3 and 4 as long as the correlations are not too strong. We did not check what happens in dimension larger than 5, partly because things are becoming harder from a computational point of view and partly because kernel density estimation is unlikely to be a relevant method to be used when the dimension space increases (this is the curse of dimensionality). 
As compared to other methods it is not always the best competitor (but you will never beat the rule of thumb for instance when the true density happens to be Gaussian) but it has the advantage of staying competitive in any situation. In the univariate case, it is never far from cross-validation methods for small sample sizes and is better for large sample sizes while it tends to be always better than smoothed plug-in methods. 
Talking about future directions of research, it would be interesting to develop PCO, both from a theoretical and a practical perspective for other losses than the $\mathbb L_2$  loss. The case of the $\mathbb L_1$  loss or the Hellinger loss are of special interest because they correspond to some intrinsic quantities which stay invariant under some change of the dominating measure. We also believe that the PCO approach is relevant for other estimator selection problems than bandwidth selection for kernel estimation but this is another story...

\bibliographystyle{alpha}
\bibliography{JASAarticle}


\appendix

\section{Proofs}\label{sec:proofs}
The notation $\square$ denotes an absolute constant that may change from line to line. We denote $\hat H=\hat H_{PCO}$ and $\langle\cdot,\cdot\rangle$ the scalar product associated with $\|\cdot\|$.
\subsection{Proof of Theorem~\ref{io3}}
The proof uses the lower bound \eqref{minorisk} stated in the next proposition.
\begin{proposition}\label{mino} Assume that $K$ is symmetric and $\int K({\bf u})d{\bf u}=1$. Assume also that $\det(\Hm)\geq \|K\|_\infty\|K\|_1/n$. Let $\Upsilon \geq (1+2\|f\|_\infty\|K\|_1^2)\|K\|_\infty/\|K\|^2$. 
For all $x\geq 1$ and for all $\eta\in (0,1)$, with probability larger than $1-\square|\H|e^{-x}$, for all $H\in \H$, 
each of the following inequalities holds:
\begin{eqnarray}
\|f-\hat f_H\|^2\leq (1+\eta)\left(\|f-f_H\|^2+\frac{\|K_H\|^2}{n}\right)+\square\frac{\Upsilon x^2}{\eta^3 n},\label{majorisk}\\
\|f-f_H\|^2+\frac{\|K_H\|^2}{n}\leq
(1+\eta)\|f-\hat f_H\|^2+\square\frac{\Upsilon x^2}{\eta^3 n}.\label{minorisk}
\end{eqnarray}
\end{proposition}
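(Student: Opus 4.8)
\emph{Proof idea.} Fix $H\in\H$, set $g_x:=K_H(\cdot-x)-f_H$ so that $\hat f_H-f_H=\tfrac1n\sum_{i=1}^n g_{X_i}$ (i.i.d., centred), and start from
$$\|f-\hat f_H\|^2=\|f-f_H\|^2+2\langle f-f_H,\,\hat f_H-f_H\rangle+\|\hat f_H-f_H\|^2 .$$
The plan is to control the linear and the quadratic term on an event of probability $\ge 1-\square e^{-x}$ and then to take a union bound over the finite grid $\H$. I use repeatedly Young's convolution inequality in the forms $\|K_H\star u\|_2\le\|K_H\|_2\|u\|_1$ and $\|K_H\star u\|_2\le\|K_H\|_1\|u\|_2$, the identities $\|K_H\|_1=\|K\|_1$, $\|K_H\|_2^2=\|K\|^2/\det(H)$, $\|K_H\|_\infty=\|K\|_\infty/\det(H)$, the symmetry of $K$ (hence $\tilde K_H=K_H$ and $\E[g_{X_1}]=0$), and the fact that $\det(H)\ge\det(\Hm)$ on $\H$; combined with the hypothesis $\det(\Hm)\ge\|K\|_\infty\|K\|_1/n$ the latter yields the two uniform bounds $\|K_H\|_\infty\le n/\|K\|_1$ and $\|K_H\|_2^2/n\le\|K\|^2/(\|K\|_\infty\|K\|_1)$ — this is the only place that hypothesis enters.

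\textbf{Linear term.} With $Z_i:=\langle f-f_H,g_{X_i}\rangle$ (i.i.d., centred) and $\langle f-f_H,K_H(\cdot-x)\rangle=((f-f_H)\star K_H)(x)$, Young's inequality gives $\mathrm{Var}(Z_1)\le\|f\|_\infty\|(f-f_H)\star K_H\|_2^2\le\|f\|_\infty\|K\|_1^2\|f-f_H\|^2$, while $\|(f-f_H)\star K_H\|_\infty\le\|K\|_1\|f-f_H\|_\infty$, $\|f_H\|_\infty\le\|K\|_1\|f\|_\infty$ and $\|f-f_H\|_1\le2$ give $|Z_1|\le\square\,\|f\|_\infty\|K\|_1(1+\|K\|_1)$, \emph{uniformly in $H$}. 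Bernstein's inequality together with $2ab\le\eta a^2+\eta^{-1}b^2$ then bounds the linear term by $\eta\|f-f_H\|^2+\square\,\|f\|_\infty\|K\|_1^2\,x/(\eta n)$; since $\Upsilon\ge(1+2\|f\|_\infty\|K\|_1^2)\|K\|_\infty/\|K\|^2$ and $\|K\|^2\le\|K\|_\infty\|K\|_1$, all the kernel/density products appearing are $\le\square\Upsilon$, so this term is $\le\eta\|f-f_H\|^2+\square\Upsilon x^2/(\eta n)$.

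\textbf{Quadratic term.} Expand $\|\hat f_H-f_H\|^2=\tfrac1{n^2}\sum_i\|g_{X_i}\|^2+\tfrac1{n^2}\sum_{i\ne j}\langle g_{X_i},g_{X_j}\rangle$. The diagonal has expectation $v_H:=\E\|\hat f_H-f_H\|^2$, and the key simplification is that $\|g_{X_i}\|^2=\|K_H\|^2+\|f_H\|^2-2(f_H\star K_H)(X_i)$ is affine in the single bounded variable $(f_H\star K_H)(X_i)$, whose variance is $\le\|f\|_\infty\|K\|_1^2\|K_H\|^2$ by Young; Bernstein then gives a two-sided deviation $\le\eta\|K_H\|^2/n+\square\Upsilon x^2/(\eta^3 n)$ after AM--GM. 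The off-diagonal is a canonical (completely degenerate) order-$2$ U-statistic with symmetric kernel $G_H(u,v)=\langle g_u,g_v\rangle$, to which I apply a Gin\'e--Lata\l a--Zinn type exponential concentration inequality; its parameters are estimated by Cauchy--Schwarz and Young — notably Fubini turns $\E[G_H(X_1,X_2)^2]$ into $\iint\rho(s,t)^2\,ds\,dt$ with $\rho(s,t)=\mathrm{Cov}(K_H(s-X),K_H(t-X))$, which is $\le\square\|f\|_\infty\|K\|_1^2\|K_H\|^2$ (write $\Gamma:=|K_H|\star|K_H|$; then $\|\Gamma\|_1=\|K\|_1^2$, $\|\Gamma\|_\infty\le\|K_H\|_2^2$, and $\|f_H\|^4\le\|K\|_1\|f\|_\infty\|K_H\|^2$), while $\|G_H\|_\infty\le\square(1+\|f\|_\infty\|K\|_1)\|K_H\|_\infty\le\square\Upsilon n$ by the hypothesis on $\det(\Hm)$. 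Hence the top-order ($x^2$) term of the U-statistic tail is $\le\square\Upsilon x^2/n$, and the $x^{1/2},x,x^{3/2}$ terms absorb into $\eta\|K_H\|^2/n+\square\Upsilon x^2/(\eta^3 n)$; overall $\bigl|\,\|\hat f_H-f_H\|^2-v_H\,\bigr|\le\eta\|K_H\|^2/n+\square\Upsilon x^2/(\eta^3 n)$. (A shorter alternative treats $\|\hat f_H-f_H\|$ by Talagrand's inequality for the empirical process $t\mapsto\langle t,\hat f_H-f_H\rangle$ on $\{\|t\|_2\le1\}$, with weak variance $\le\|f\|_\infty\|K\|_1^2$, envelope $\|K_H\|_2$ and $\E\|\hat f_H-f_H\|\le\sqrt{v_H}$, then squares.)

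\textbf{Conclusion and obstacle.} Since $v_H=\|K_H\|^2/n-\|f_H\|^2/n$ with $0\le\|f_H\|^2\le\|K\|_1\|f\|_\infty\le\square\Upsilon$, one may freely swap $v_H$ and $\|K_H\|^2/n$ at cost $\square\Upsilon/n$. Feeding the two estimates into the decomposition and rescaling $\eta$ gives $\|f-\hat f_H\|^2\le(1+\eta)(\|f-f_H\|^2+\|K_H\|^2/n)+\square\Upsilon x^2/(\eta^3 n)$, i.e.\ \eqref{majorisk}; rearranging the same three inequalities (and using $\|\hat f_H-f_H\|^2\ge0$ where convenient) gives \eqref{minorisk}. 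A union bound over the $|\H|$ matrices of $\H$ concludes. \emph{The main obstacle} is the quadratic term: forcing the degenerate-U-statistic (or Talagrand) remainder into the single expression $\square\Upsilon x^2/(\eta^3 n)$ is exactly what dictates the lower bound on $\det(\Hm)$ and the precise shape of $\Upsilon$, and the estimation of the several norms of the kernel $G_H$ is the technical heart of the argument.
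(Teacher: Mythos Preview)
Your proposal is correct and follows the standard route: decompose $\|f-\hat f_H\|^2$ as bias$^2$ plus a centred linear term (handled by Bernstein) plus a centred quadratic term (split into a diagonal part controlled by Bernstein and a degenerate $U$-statistic controlled by a Houdr\'e--Reynaud-Bouret/Gin\'e--Lata\l a--Zinn inequality, or alternatively by Talagrand applied to the empirical process). This is precisely the architecture of Proposition~4.1 (combined with Proposition~3.3) of the reference the paper invokes; the paper itself gives no argument beyond citing that reference and noting that the extension from diagonal to general symmetric positive-definite bandwidths is straightforward --- which it is, since only the scalar identities $\|K_H\|_1=\|K\|_1$, $\|K_H\|_2^2=\|K\|^2/\det(H)$, $\|K_H\|_\infty=\|K\|_\infty/\det(H)$ and the lower bound $\det(H)\ge\det(\Hm)$ are used. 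Your constant-tracking to land exactly on $\square\Upsilon x^2/(\eta^3 n)$ is somewhat telegraphic (in particular the step ``all the kernel/density products appearing are $\le\square\Upsilon$'' deserves a line or two of algebra, and the $\eta^{-3}$ emerges from balancing the $x^{1/2},x,x^{3/2}$ terms of the $U$-statistic tail against $\eta\|K_H\|^2/n$), but there is no conceptual gap.
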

The proof of this proposition is an easy generalization of the proof of Proposition 4.1 of \cite{leraslenelo} (combined with their Proposition~3.3) to the case of bandwidth matrices. 
We now  give a general result for the study of $\hat f:=\hat f_{\hat H}$, which is the analog of Theorem~9 of \cite{PCO_2016}. We set for any $H\in\H$,
$$\pen_\lambda(H):=-\frac{\|K_{\Hm}-K_H\|^2}n + \lambda \frac{\|K_H\|^2}n.$$
\begin{theorem} \label{io} 
Assume that $K$ is symmetric and $\int K({\bf u})d{\bf u}=1$. Assume also that $\det(\Hm)\geq \|K\|_\infty\|K\|_1/n$ and $\|f\|_\infty<\infty$. Let $x\geq 1$ and $\theta\in (0,1)$. With probability larger than $1-C_1|\H|\exp(-x)$, for any $H\in\H$,
\begin{eqnarray*}
(1-\theta)\| \hat  f_{\hat H}-f\|^2 &\leq& (1+\theta)\|\hat f_H-f\|^2+\left(\pen_\lambda(H)-2\frac{\langle K_H, K_{\Hm} \rangle}{n}\right)\\
&&
-\left(\pen_\lambda(\hat H)-2\frac{\langle K_{\hat H}, K_{\Hm} \rangle}{n}\right)
+\frac{C_2}{\theta}\|f_{\Hm}-f\|^2
\\&&+\frac{C(K)}{\theta }\left(\frac{\|f\|_{\infty}x^2}{n}+\frac{x^3}{n^2\det(\Hm)}\right),
\end{eqnarray*}
where $C_1$ and $C_2$ are absolute constants and $C(K)$ only depends on $K$.
\end{theorem}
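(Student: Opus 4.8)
The plan is to follow the proof of Theorem~9 of \cite{PCO_2016}, the passage from scalar bandwidths to bandwidth matrices being essentially notational ($h\leftrightarrow H$, $1/(nh)\leftrightarrow 1/(n\det H)$, with the translation invariance of $\langle K_G(\cdot-x),K_{\Hm}(\cdot-y)\rangle$ still valid since $K$ is symmetric); the only substantive inputs are matrix versions of a Bernstein inequality for the relevant linear empirical process and of a Houdr\'e--Reynaud deviation bound for a degenerate $U$-statistic, obtained exactly as the generalization of Propositions~3.3 and~4.1 of \cite{leraslenelo} already invoked for Proposition~\ref{mino}.

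First I would start from the defining inequality $l_{PCO}(\hat H)\le l_{PCO}(H)$, that is $\|\hat f_{\Hm}-\hat f_{\hat H}\|^2+\pen_\lambda(\hat H)\le\|\hat f_{\Hm}-\hat f_{H}\|^2+\pen_\lambda(H)$, and expand both squared norms through $\|\hat f_{\Hm}-\hat f_G\|^2=\|\hat f_G-f\|^2+\|\hat f_{\Hm}-f\|^2-2\langle\hat f_G-f,\hat f_{\Hm}-f\rangle$, applied with $G=\hat H$ and $G=H$. The common term $\|\hat f_{\Hm}-f\|^2$ cancels, so it remains to control, for $G\in\{\hat H,H\}$, the cross term $2\langle\hat f_G-f,\hat f_{\Hm}-f\rangle$. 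Splitting $\hat f_{\Hm}-f=(\hat f_{\Hm}-f_{\Hm})+(f_{\Hm}-f)$ and $\hat f_G-f=(\hat f_G-f_G)+(f_G-f)$ produces four pieces, treated as follows. The piece $2\langle\hat f_G-f,f_{\Hm}-f\rangle$ is handled by Cauchy--Schwarz and Young, contributing at most $\theta\|\hat f_G-f\|^2/2+(C/\theta)\|f_{\Hm}-f\|^2$. The piece $2\langle f_G-f,\hat f_{\Hm}-f_{\Hm}\rangle$ is a centered linear empirical process in $f_G-f$; applying a Bernstein inequality with the variance and sup-norm bounds of the $(K_{\Hm}-f_{\Hm})$-type functions, then a union bound over the finite grid $\H$, its quadratic part is absorbed into $\theta\|f_G-f\|^2$ and a remainder of order $\|f\|_\infty x^2/(\theta n)$ remains. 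The piece $2\langle\hat f_G-f_G,\hat f_{\Hm}-f_{\Hm}\rangle$ splits into a diagonal term of expectation $\tfrac2n\langle K_G,K_{\Hm}\rangle$ — precisely the quantity pulled out in the statement — up to lower-order deterministic contributions such as $\tfrac2n\langle f_G,f_{\Hm}\rangle$, which are absorbed into the $\|f\|_\infty x^2/n$ remainder since $\|f_G\|,\|f_{\Hm}\|\le\|K\|_1\|f\|$ and $\|f\|^2\le\|f\|_\infty$; and an off-diagonal degenerate $U$-statistic, which I would control uniformly over $G\in\H$ by the Houdr\'e--Reynaud inequality, its four-term bound producing the remainders $\|f\|_\infty x^2/n$ and $x^3/(n^2\det(\Hm))$, the latter coming from $\|K_G\star K_{\Hm}\|_\infty\le\|K\|_1\|K\|_\infty/\det(\Hm)$ together with the assumption $\det(\Hm)\ge\|K\|_\infty\|K\|_1/n$.

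Finally I would reassemble on the intersection of all these events, of probability at least $1-C_1|\H|e^{-x}$. The $\theta\|\hat f_{\hat H}-f\|^2$-type pieces on the left combine into $(1-\theta)\|\hat f_{\hat H}-f\|^2$; the analogous pieces on the right, after using $\|f_H-f\|\le\|\hat f_H-f\|+\|\hat f_H-f_H\|$ and the concentration of $\|\hat f_H-f_H\|$, combine into $(1+\theta)\|\hat f_H-f\|^2$; the diagonal expectations give $-2\langle K_{\hat H},K_{\Hm}\rangle/n$ on the left and $+2\langle K_H,K_{\Hm}\rangle/n$ on the right, which paired with $\pen_\lambda$ yield the bracketed expressions of the statement; and the remaining terms give $\tfrac{C_2}{\theta}\|f_{\Hm}-f\|^2+\tfrac{C(K)}{\theta}\bigl(\|f\|_\infty x^2/n+x^3/(n^2\det\Hm)\bigr)$. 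The main obstacle is the degenerate $U$-statistic step: obtaining the correct joint dependence on $x$ and on $\det(\Hm)$ requires carefully tracking, in the matrix setting, the sup-norm, the $\mathbb L_2$-operator norm and the weak variances of the kernel of the $U$-statistic built from the translated differences $K_G(\cdot-X_i)$ and $K_{\Hm}(\cdot-X_j)$; by comparison the linear term is routine Bernstein and the Cauchy--Schwarz piece is elementary.
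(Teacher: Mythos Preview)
Your proposal is correct and follows essentially the same route as the paper: both start from the basic comparison inequality \eqref{idealpenal}, decompose the cross term $\langle\hat f_G-f,\hat f_{\Hm}-f\rangle$ into a deterministic bias piece, linear empirical processes (the $V$-terms), the diagonal contribution $\langle K_G,K_{\Hm}\rangle/n$ plus the lower-order terms in \eqref{negt}, and the degenerate $U$-statistic of Lemma~\ref{lem:UStat}, then invoke Proposition~\ref{mino} to pass from $\|f_H-f\|^2+\|K\|^2/(n\det H)$ back to $\|\hat f_H-f\|^2$. The only cosmetic difference is that you group $\langle\hat f_G-f,f_{\Hm}-f\rangle$ as a single Cauchy--Schwarz piece (yielding $\theta\|\hat f_G-f\|^2$ directly), whereas the paper splits it further into $V(G,\Hm)$ and $\langle f_G-f,f_{\Hm}-f\rangle$ and only later converts the resulting $\|f_G-f\|^2$ via Proposition~\ref{mino}; both organizations lead to the same bound.
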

The oracle inequality directly follows from this theorem, see Section 5.2 in \cite{PCO_2016}.
\subsubsection*{Proof of Theorem~\ref{io}}
Let $\theta'\in(0,1)$ be fixed and chosen later. Following \cite{PCO_2016}, we can write, for any $H\in \H$, 
\begin{equation}\label{idealpenal}
\| \hat  f_{\hat H}-f\|^2 \leq \|\hat f_H-f\|^2+\left(\pen_\lambda(H)-2\langle \hat f_H-f, \hat f_{\Hm}-f\rangle\right)
-\left(\pen_\lambda(\hat H)-2\langle \hat f_{\hat H}-f, \hat f_{\Hm}-f\rangle\right).
\end{equation}
Then, for a given $H$, we study the term $2\langle \hat f_H-f, \hat f_{\Hm}-f\rangle$
that can be viewed as an ideal penalty.
Let us introduce the degenerate U-statistic
$$U(H,\Hm)=\sum_{i\neq j}
\langle K_H(.-\bm{X}_i)-f_H, K_{\Hm}(.-\bm{X}_j)-f_{\Hm}\rangle 
$$
and the following centered variable
$$V(H,H')=<\hat f_H-f_H, f_{H'}-f>.$$
We have the following decomposition of $\langle \hat f_H-f, \hat f_{\Hm}-f\rangle$:
\begin{eqnarray}
\langle \hat f_H-f, \hat f_{\Hm}-f\rangle
&=&\frac{\langle K_H, K_{\Hm} \rangle}{n} +\frac{U(H,\Hm)}{n^2}
 \label{maint}\\
& &-\frac1n \langle \hat f_H, f_{\Hm}\rangle-\frac1n \langle  f_H, \hat f_{\Hm}\rangle+\frac1n \langle  f_H, f_{\Hm}\rangle\label{negt}\\
& &+V(H,\Hm)+ V(\Hm,H)
+\langle f_H-f, f_{\Hm}-f\rangle.\label{conct}
\end{eqnarray}
We first control the last term of the first line and we obtain the following lemma.
\begin{lemma}\label{lem:UStat}
With probability larger than $1-5.54|\mathcal H|\exp(-x)$, for any $H\in\mathcal H$,
$$\frac{|U(H,\Hm)|}{n^2}\leq \theta' \frac{\|K\|^2}{n\det(H)}+\frac{\square \|K\|_1^2\|f\|_\infty x^2}{\theta' n}+\frac{\square\|K\|_{\infty}\|K\|_1 x^3}{\theta' n^2\det(\Hm)}$$
\end{lemma}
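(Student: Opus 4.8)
The plan is to bound the degenerate $U$-statistic
$$U(H,\Hm)=\sum_{i\neq j}\langle K_H(\cdot-\bm{X}_i)-f_H,\ K_{\Hm}(\cdot-\bm{X}_j)-f_{\Hm}\rangle$$
by means of a concentration inequality for degenerate U-statistics of order two, in the spirit of the bound used for Theorem~9 of \cite{PCO_2016}. Writing the kernel of the U-statistic as
$$G_{H,\Hm}(\bm{x},\bm{y})=\langle K_H(\cdot-\bm{x})-f_H,\ K_{\Hm}(\cdot-\bm{y})-f_{\Hm}\rangle,$$
which is symmetric after symmetrization and degenerate (its conditional expectation given one variable vanishes, since $\E[K_H(\cdot-\bm{X}_i)]=f_H$), I would apply the Houdr\'e--Reynaud / Gin\'e--Latała--Zinn type inequality (as stated, e.g., in the references used by \cite{PCO_2016} and \cite{leraslenelo}). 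That inequality controls $|U(H,\Hm)|$ with high probability in terms of four quantities: $A=\|G_{H,\Hm}\|_\infty$, $B^2=$ a supremum of conditional $L^2$-norms, $C^2=\E[G_{H,\Hm}^2]$, and $D=$ a supremum over unit balls of a bilinear form; each of these contributes a term of the shape $C^2 x$, $D x^{3/2}$, $B x^2$, $A x^3$ after dividing by $n^2$.

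The core of the proof is therefore to estimate these four quantities in terms of $\|K\|$, $\|K\|_1$, $\|K\|_\infty$, $\|f\|_\infty$, $\det(H)$ and $\det(\Hm)$. First I would bound the "diagonal" scale: $\|K_H\|\,\|K_{\Hm}\|=\|K\|^2/\sqrt{\det(H)\det(\Hm)}$ and, using $\det(\Hm)\ge \|K\|_\infty\|K\|_1/n$, absorb the $\Hm$-dependence so that the dominant variance term becomes $\lesssim \|K\|^2/(n\det(H))$ after multiplication by the appropriate power of $n$; this is exactly the term we want to appear as $\theta'\|K\|^2/(n\det(H))$ by Young's inequality $ab\le \theta' a^2/2 + b^2/(2\theta')$. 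Then $\|G_{H,\Hm}\|_\infty$ is controlled via Cauchy--Schwarz and $\|K_H(\cdot-\bm{x})-f_H\|\le 2\|K_H\|=2\|K\|/\sqrt{\det(H)}$, combined with $\|K_{\Hm}(\cdot-\bm{y})-f_{\Hm}\|_\infty\lesssim \|K\|_\infty/\det(\Hm)+\|f\|_\infty$; the worst piece yields the $x^3$ term with denominator $n^2\det(\Hm)$. The conditional second moment $B^2$ and $\E[G^2]$ bring in factors $\|f\|_\infty$ and $\|K\|_1^2$ (the $L^1$ norm enters when integrating $|K_H|$ against a bounded density), producing the $\|K\|_1^2\|f\|_\infty x^2/(\theta' n)$ term. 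The bilinear term $D$ is intermediate and, after Young's inequality, is absorbed into the variance term and the $x^2$ term.

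I expect the main obstacle to be the bookkeeping of the four U-statistic parameters with the correct powers of $n$ and $\det(\Hm)$, and in particular checking that after invoking the assumption $\det(\Hm)\ge\|K\|_\infty\|K\|_1/n$ every contribution collapses into exactly the three advertised terms (one absorbable variance term, the $x^2/n$ term, the $x^3/(n^2\det(\Hm))$ term) with constants depending only on $K$ and $\|f\|_\infty$. The uniformity over $H\in\mathcal H$ is then obtained by a union bound over the finite grid $\mathcal H$, which is what turns the individual $e^{-x}$ probability into $5.54|\mathcal H|e^{-x}$; the numerical constant $5.54$ simply tracks the explicit constant in the concentration inequality being quoted. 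Once Lemma~\ref{lem:UStat} is in place, the remaining terms in \eqref{negt} and \eqref{conct} are handled separately (the terms in \eqref{negt} being negligible of order $1/n$ times scalar products that are themselves controlled, and those in \eqref{conct} via Proposition~\ref{mino} and the centered variables $V(H,\Hm)$), so I would only flag here that Lemma~\ref{lem:UStat} is the quantitatively delicate step.
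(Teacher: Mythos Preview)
Your proposal is correct and follows essentially the same approach as the paper: the paper's proof simply invokes the concentration inequality for degenerate $U$-statistics (referring to Lemma~10 of \cite{PCO_2016}) together with the scaling identities $\|K_H\|_\infty\leq \|K\|_\infty/\det(H)$ and $\|K_H\|^2=\|K\|^2/\det(H)$, which is precisely the Houdr\'e--Reynaud-type bound you outline, with the four parameters $A,B,C,D$ estimated via these norms and then combined by Young's inequality and a union bound over $\mathcal H$. Your sketch is in fact more detailed than what the paper provides.
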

\begin{proof}
The proof uses a concentration inequality for $U$-statistics. It is similar to the proof of Lemma 10 in \cite{PCO_2016}, using that 
$$\|K_H\|_\infty\leq \frac{\|K\|_{\infty}}{\det(H)}\quad \text{ and } 
\quad\|K_H\|^2= \frac{\|K\|^2}{\det(H)}.$$
\end{proof}
We  control \eqref{negt} and   \eqref{conct}
similarly to \cite{PCO_2016}. Then, from Lemma~\ref{lem:UStat}, we obtain the following result.
With probability larger than $1-9.54|\mathcal H|\exp(-x)$, for any $H\in\mathcal H$,
\begin{eqnarray}\label{lempenid}
|\langle \hat f_H-f, \hat f_{\Hm}-f\rangle-\frac{\langle K_H, K_{\Hm} \rangle}{n}| &\leq& \theta'\|f_H-f\|^2+
\theta' \frac{\|K\|^2}{n\det(H)}+\left(\frac{\theta'}{2}+\frac{1}{2\theta'}\right)\|f_{\Hm}-f\|^2\nonumber\\
&&+\frac{C_1(K)}{\theta' }\left(\frac{\|f\|_{\infty}x^2}{n}+\frac{x^3}{n^2\det(\Hm)}\right),
\end{eqnarray}
where $C_1(K)$ is a constant only depending on $K$.
Now, Proposition~\ref{mino} gives, with probability larger than 
 $1-\square|\mathcal H|\exp(-x)$, for any $H\in\mathcal H$,
$$\|f_H-f\|^2+
\frac{\|K\|^2}{n\det(H)}\leq 2\|\hat f_H-f\|^2+C_2(K)\|f\|_\infty\frac{x^2}{n},$$
where $C_2(K)$ only depends on $K$. Hence, by applying \eqref{lempenid}, with probability larger than 
$1-\square|\mathcal H|\exp(-x)$, for any $H\in\mathcal H$,
\begin{eqnarray*}
\left|\langle \hat f_H-f, \hat f_{\Hm}-f\rangle-\frac{\langle K_H, K_{\Hm} \rangle}{n}-\langle \hat f_{\hat H}-f, \hat f_{\Hm}-f\rangle+\frac{\langle K_{\hat H}, K_{\Hm} \rangle}{n}\right| \leq
 2\theta'\|\hat f_H-f\|^2\\ +2\theta'\|\hat f_{\hat H}-f\|^2
+\left(\theta'+\frac{1}{\theta'}\right)\|f_{\Hm}-f\|^2+\frac{\tilde C(K)}{\theta' }\left(\frac{\|f\|_{\infty}x^2}{n}+\frac{x^3}{n^2\det(\Hm)}\right),
\end{eqnarray*}
where $\tilde C(K)$ is a constant only depending on $K$.
It remains to use \eqref{idealpenal} and to choose $\theta'=\frac{\theta}{4}$ to conclude.

\subsection{Proof of Corollary~\ref{corovitesse}}
We shall use the following Lemma to control the bias terms.
\begin{lemma} \label{biais}
Let $H$ be a symmetric positive matrix with diagonalization $H= P^{-1}DP$ where $P$ orthogonal and $D$ diagonal. Then 
$$\|f_H-f\|=\|\tilde f_D- \tilde f\|$$
where $\tilde f=f\circ P^{-1}$ and $\tilde f_D=\tilde K_D\star \tilde f$ where $\tilde K=K\circ P^{-1}$.
Moreover, if $f\circ P^{-1}$ belongs to the anisotropic Nikol'skii class $\mathcal{N}_{{\bf 2},d}(\boldsymbol{\beta},{\bf L})$ and $K$ is order $\ell>\max_{j=1,\ldots,d}\beta_j$ then there exists $C>0$ such that
$$\|f_{H}-f\|\leq 
C \sum_{j=1}^d L_jh_j^{\beta_j}$$
where $(h_j)_{j=1}^d$ are the eigenvalues of $H$.
\end{lemma}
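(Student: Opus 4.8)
The plan is to reduce the multivariate bias computation to a rotated coordinate system and then invoke the standard Nikol'skii-class bias bound for product-type kernels. First I would establish the change-of-variables identity. Since $P$ is orthogonal, $\det(P)=\pm1$, so the Lebesgue measure is preserved under $\bm{x}\mapsto P\bm{x}$. Writing $f_H=K_H\star f$ and using the explicit form $K_H(\bm{x})=\det(H)^{-1}K(H^{-1}\bm{x})$ with $H=P^{-1}DP$, one substitutes $\bm{x}=P^{-1}\bm{u}$, $\bm{y}=P^{-1}\bm{v}$ inside the convolution integral $f_H(\bm{x})=\int K_H(\bm{x}-\bm{y})f(\bm{y})\,d\bm{y}$. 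Because $H^{-1}=P^{-1}D^{-1}P$ and $\det(H)=\det(D)$, the integrand rearranges exactly into $\det(D)^{-1}\tilde K\bigl(D^{-1}(\bm{u}-\bm{v})\bigr)\tilde f(\bm{v})$ where $\tilde f=f\circ P^{-1}$ and $\tilde K=K\circ P^{-1}$; hence $f_H\circ P^{-1}=\tilde K_D\star\tilde f=\tilde f_D$. Then, again by orthogonal invariance of the $\mathbb L_2$-norm, $\|f_H-f\|=\|(f_H-f)\circ P^{-1}\|=\|\tilde f_D-\tilde f\|$, which is the first claim.

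For the second claim, I would note that $\tilde K_D$ is now an \emph{anisotropic diagonal} rescaling of $\tilde K$, so this is precisely the one-sided (diagonal-bandwidth) situation already treated in \cite{PCO_2016}. The key point is that $\tilde K=K\circ P^{-1}$ inherits from $K$ the order-$\ell$ vanishing-moment property: for any polynomial $Q$ of degree $\le\ell$, the map $\bm{u}\mapsto Q(P\bm{u})$ is again a polynomial of degree $\le\ell$, so $\int \tilde K(\bm{u})Q(\bm{u})\,d\bm{u}=\int K(\bm{v})Q(P^{-1}\cdot P\bm{v})\,\ldots$; more carefully, $\int \tilde K(\bm{u})Q(\bm{u})d\bm{u}=\int K(\bm{v})Q(P\bm{v})d\bm{v}=0$ since $Q\circ P$ has degree $\le\ell$. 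One also needs $\tilde K$ to still satisfy the integrability conditions \eqref{hyp-K}, which again follows from orthogonal invariance of $\|\cdot\|$ and of $\|\bm{x}\|$. Then a classical anisotropic-bias estimate (a Taylor expansion of $\tilde f$ in each coordinate direction, controlled by the Nikol'skii moduli $L_j$ and the vanishing moments of $\tilde K$ up to order $\lceil\beta_j\rceil\le\ell$) gives $\|\tilde f_D-\tilde f\|\le C\sum_{j=1}^d L_j h_j^{\beta_j}$, where the $h_j$ are the diagonal entries of $D$, i.e. the eigenvalues of $H$. This is exactly the bound used in the proof of Corollary~7 of \cite{PCO_2016}, applied to $\tilde f$ in place of $f$ and $\tilde K$ in place of $K$.

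The main obstacle, and the only genuinely non-routine point, is verifying that the hypothesis "$f\circ P^{-1}\in\mathcal{N}_{{\bf 2},d}(\boldsymbol\beta,{\bf L})$" plus "$\tilde K$ of order $\ell$" is enough to run the \emph{anisotropic} bias bound rather than an isotropic one — in particular that the directional smoothness of $\tilde f$ along the standard axes (which is what the Nikol'skii class controls after composing with $P^{-1}$) is aligned with the directions along which $\tilde K_D$ does its anisotropic rescaling. Since both $\tilde f$ and $\tilde K_D$ "live" in the rotated frame by construction, this alignment is automatic, so the obstacle is really just bookkeeping: one must be careful that $\tilde K$ is a genuine order-$\ell$ kernel in the rotated frame (handled above) and that the constant $C$ depends only on $\boldsymbol\beta$, $d$, and $K$ (not on $P$), which holds because $\|\tilde K\|$, $\int\|\bm x\|^2|\tilde K(\bm x)|\,d\bm x$ and the relevant moments are all $P$-invariant. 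With these checks in place the Lemma follows, and it feeds directly into the proof of Corollary~\ref{corovitesse}: combined with Theorem~\ref{io3}, choosing $\bar h$ as prescribed and optimizing the $h_j$ subject to $\prod_j h_j\ge\bar h^d$ yields the announced rate $n^{-2\bar\beta/(2\bar\beta+1)}$.
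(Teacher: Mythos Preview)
Your proposal is correct and follows essentially the same route as the paper: a change of variables exploiting the orthogonality of $P$ to reduce $\|f_H-f\|$ to $\|\tilde f_D-\tilde f\|$, the observation that $\tilde K=K\circ P^{-1}$ is again a kernel of order $\ell$, and then an appeal to the standard anisotropic Nikol'skii bias bound (the paper cites Lemma~3 of \cite{GL14} rather than Corollary~7 of \cite{PCO_2016}, but the content is the same). Your discussion of why the constant $C$ is $P$-invariant is a nice addition that the paper leaves implicit.
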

\subsubsection*{Proof of Lemma~\ref{biais}}
Compute for any ${\bf t}\in\R^d$,
$$f_H({\bf t})=\frac{1}{\det(H)} \int K (P^{-1} D^{-1} P({\bf t}-{\bf u}))f({\bf u})d{\bf u}
=\frac{1}{\det(D)} \int K (P^{-1} D^{-1} (P{\bf t}-{\bf v}))f(P^{-1}{\bf v})d{\bf v}
$$ 
$$
=\frac{1}{\det(D)} \int \tilde K (D^{-1} (P{\bf t}-{\bf v}))\tilde f ({\bf v})d{\bf v}
=\tilde K_D\star \tilde f (P{\bf t})=\tilde f_D (P{\bf t}).$$ 
Thus
$$\|f_H-f\|^2=\int |f_H({\bf t})-f({\bf t})|^2d{\bf t}=\int |\tilde f_D (P{\bf t})-f({\bf t})|^2d{\bf t}
=\int |\tilde f_D ({\bf y})-f(P^{-1}{\bf y})|^2d{\bf y}=\|\tilde f_D- \tilde f\|.$$
Note that if $K$ is order $\ell$, then $\tilde K$ is order $\ell.$ Then we apply Lemma 3 of \cite{GL14} to $\tilde f$.
\hfill$\blacksquare$\\

Now, let  ${\mathcal E}$ be the event corresponding to the intersection of events considered in Theorem~\ref{io3} and Proposition~\ref{mino}. For any $A>0$, by taking $x$ proportional to $\log n$, $\mathbb{P}({\mathcal E})\geq 1-n^{A}$. On ${\mathcal E}$
\begin{eqnarray*}
\| \hat  f_{\hat H}-f\|^2 &\leq &C_0(\varepsilon,\lambda)(1+\eta)\min_{H\in\H}\left(C \sum_{j=1}^d L_jh_j^{\beta_j}+\frac{\|K\|^2}{n\prod_{j=1}^d h_j}\right)\\&&+C_2(\varepsilon, \lambda)C \sum_{j=1}^d L_j\bar h^{\beta_j}+C'\frac{(\log n)^3}{n}.
\end{eqnarray*}
But, on ${\mathcal E}^c$, for any $H\in\H$,
$\|\hat f_H-f\|^2\leq 2\|f\|^2+{2\|K\|^2(\|K\|_\infty\|K\|_1)^{-1}n}.$
Thus
\begin{eqnarray*}
\E\left[\|\hat  f_{\hat H}-f\|^2\right] 
&\leq&\E\left[\|\hat  f_{\hat H}-f\|^21_{\mathcal E}\right]+\E\left[\|\hat  f_{\hat H}-f\|^21_{{\mathcal E}^c}\right] \\
&\leq &M\left(\prod_{j=1}^dL_j^{\frac{1}{\beta_j}}\right)^{\frac{2\bar\beta}{2\bar\beta+1}}n^{-\frac{2\bar\beta}{2\bar\beta+1}},
\end{eqnarray*}
where $M$ is a constant depending on an upper bound of $f$, $\boldsymbol{\beta},$ $K,$ $d$ and $\lambda$.


\section{Testing densities}
In this section, we present the testing distributions. We respectively denote $\mathcal{N}$, $\mathcal{E}$ and $\mathcal{U}$ the Gaussian, exponential and uniform distributions.

\begin{table}
\def\arraystretch{1.2}
\footnotesize
\begin{tabular}[l]{@{}p{2.5cm} c p{10cm}}

Dist. name &Abb. & Distribution\\ 
\hline

Gauss & G & $\mathcal{N}(0,1)$  \\ 
Uniform & U & $\mathcal{U}([0,1])$ \\ 
Exponential & E & $\mathcal{E}(1)$ \\ 
Mix gauss & MG & $\frac{1}{2}\mathcal{N}(0, 1) + \frac{1}{2}\mathcal{N}(3, (\frac{1}{3})^2)$  \\ 
Skewed & Sk & $\frac{1}{5}\mathcal{N}(0, 1) + \frac{1}{5}\mathcal{N}(\frac{1}{2}, (\frac{2}{3})^2) + \frac{3}{5}\mathcal{N}(\frac{13}{12}, (\frac{5}{9})^2)$  \\ 
Strong skewed & Sk+ & $\sum_{l=0}^{7} \frac{1}{8}\mathcal{N}(3((\frac{2}{3})^l -1), (\frac{2}{3})^{2l})$  \\ 
Kurtotic & K & $\frac{2}{3}\mathcal{N}(0, 1) + \frac{1}{3}\mathcal{N}(0, (\frac{1}{10})^2)$  \\ 
Outlier & O & $\frac{1}{10}\mathcal{N}(0, 1) + \frac{9}{10}\mathcal{N}(0, (\frac{1}{10})^2)$  \\ 
Bimodal & Bi & $\frac{1}{2}\mathcal{N}(-1, (\frac{2}{3})^2) + \frac{1}{2}\mathcal{N}(1, (\frac{2}{3})^2)$  \\ 
Separated bimodal & SB & $\frac{1}{2}\mathcal{N}(-\frac{3}{2}, (\frac{1}{2})^2) + \frac{1}{2}\mathcal{N}(\frac{3}{2}, (\frac{1}{2})^2)$  \\ 
Skewed bimodal & SkB & $\frac{3}{4}\mathcal{N}(0, 1) + \frac{1}{4}\mathcal{N}(\frac{3}{2}, (\frac{1}{3})^2)$  \\ 
Trimodal & T & $\frac{9}{20}\mathcal{N}(-\frac{6}{5}, (\frac{3}{5})^2) + \frac{9}{20}\mathcal{N}(\frac{6}{5}, (\frac{3}{5})^2) + \frac{1}{10}\mathcal{N}(0, (\frac{1}{4})^2)$  \\ 
Bart & B &  $\frac{1}{2}\mathcal{N}(0,1) + \sum_{l=0}^{4} \frac{1}{10}\mathcal{N}(\frac{l}{2} - 1, (\frac{1}{10})^{2})$ \\ 
Double bart & DB & $\frac{49}{100}\mathcal{N}(-1,(\frac{2}{3})^2) + \frac{49}{100}\mathcal{N}(1,(\frac{2}{3})^2) + \sum_{l=0}^{6} \frac{1}{350}\mathcal{N}(\frac{l-3}{2}, (\frac{1}{100})^{2})$  \\ 
Asymetric bart & AB & $\frac{1}{2}\mathcal{N}(0,1) + \sum_{l=-2}^{2} \frac{2^{1-l}}{31} \mathcal{N}(l+\frac{1}{2}, (\frac{2^{-l}}{10})^2)$  \\ 
Asymetric double bart & ADB & $\sum_{l=0}^{1}\frac{46}{100}\mathcal{N}(2l-1, (\frac{2}{3})^2) + \sum_{l=1}^{3}\frac{1}{300}\mathcal{N}(-\frac{l}{2}, (\frac{1}{100})^2) + \sum_{l=1}^{3}\frac{7}{300}\mathcal{N}(\frac{l}{2}, (\frac{7}{100})^2)$  \\ 
Smooth comb & SC & $\sum_{l=0}^{5} \frac{2^{5-l}}{63} \mathcal{N}(\frac{65-96(\frac{1}{2})^l}{21}, (\frac{32}{63}(\frac{1}{2})^l)^2)$  \\ 
Discrete comb & DC & $\sum_{l=0}^{2}\frac{2}{7} \mathcal{N}(\frac{12l-15}{7} , (\frac{2}{7})^2) + \sum_{l=8}^{10}\frac{1}{21} \mathcal{N}(\frac{2l}{7} , (\frac{1}{21})^2)$  \\ 
Mix Uniform & MU & $\frac{1}{25}  \mathcal{U}([0,            \frac{3}{20}]) 
				  + \frac{29}{200}\mathcal{U}([\frac{3}{20}, \frac{1}{5}]) 
				  + \frac{17}{200}\mathcal{U}([\frac{1}{5},  \frac{3}{8}]) 
				  + \frac{1}{20}  \mathcal{U}([\frac{3}{8},  \frac{4}{8}])
				  + \frac{7}{50}\mathcal{U}([\frac{4}{8}, \frac{3}{5}])
			  	  + \frac{1}{5}\mathcal{U}([\frac{3}{5}, \frac{4}{5}]) 
				  + \frac{7}{50}\mathcal{U}([\frac{4}{5}, \frac{7}{8}]) 
				  + \frac{1}{5}\mathcal{U}([\frac{7}{8}, 1])$ \\ 

\end{tabular} 
\caption{Definition of one-dimensional testing densities}
\label{tab:PDF1D}
\end{table}

\begin{figure}
	\begin{center}
		\includegraphics[scale=0.8]{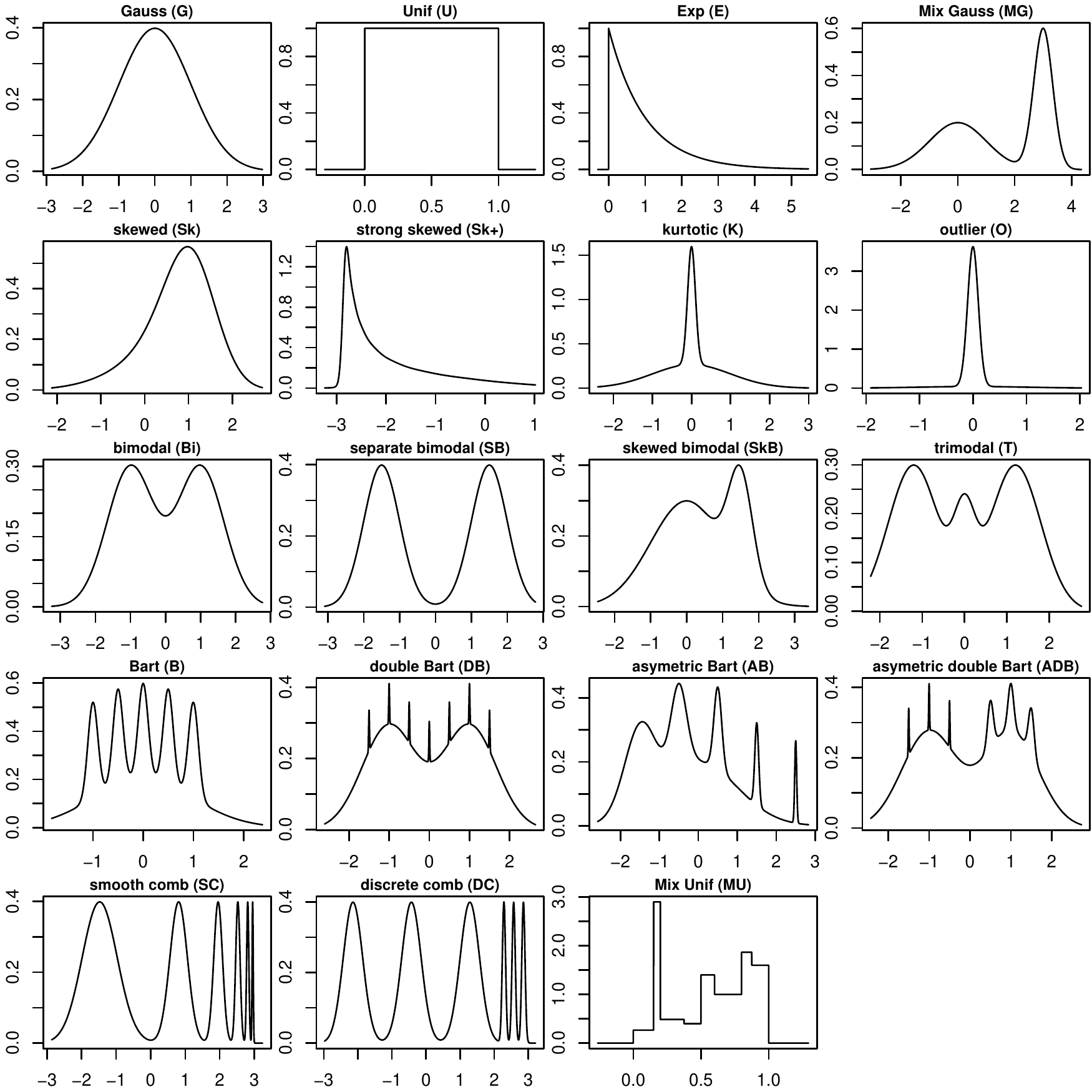}
		\caption{Representation of one-dimensional testing densities}
		\label{fig:pdftest1D}
	\end{center}
\end{figure}

\begin{table}
\def\arraystretch{1.2}
\footnotesize
\begin{tabular}[l]{@{}p{2.5cm} c p{10cm}}
Dist. name &Abb. & Distribution\\ 
\hline
Uncorrelated Gauss & UG &
$\mathcal{N}\left( \bm{0}; (0.25,0,1) \right)$\\
Correlated Gauss & CG & 
$\mathcal{N}\left( \bm{0}; (1,0.9,1) \right)$\\
Uniform & U& 
$\mathcal{U}(\{ \bm{x} \quad|\quad  \Vert \bm{x}-\bm{a}\Vert^2 \leq r^2, \bm{a}=(2,2), r=1\})$\\
Strong Skewed & Sk+ &
$\sum_{l=0}^{7} \frac{1}{8}\mathcal{N}\left( 
 (3\left(1-(\frac{4}{5})^l\right), -3\left(1-(\frac{4}{5})^l\right);
	(\frac{4}{5})^{2l}(1, -\frac{9}{10}, 1) \right)$\\
Skewed & Sk &
$\frac{1}{5}\mathcal{N}\left( (0,0); (1,0,1) \right) + 
 \frac{1}{5}\mathcal{N}\left( (5, 5) ; (\frac{4}{9}, 0, \frac{4}{9})\right) + 
 \frac{3}{5}\mathcal{N}\left( (10, 10); (\frac{25}{81},0,\frac{25}{81})
 \right)$\\
Dumbbell & D &
$\frac{4}{11}\mathcal{N}\left( (-\frac{3}{2}, \frac{3}{2}); \frac{9}{16}\bm{I} \right) + 
 \frac{4}{11}\mathcal{N}\left( (\frac{3}{2}, -\frac{3}{2}); \frac{9}{16}\bm{I} \right) +
 \frac{3}{11}\mathcal{N}\left( \bm{0}; \frac{9}{16}(\frac{4}{5}, -\frac{18}{25}, \frac{4}{5})
	\right)$\\
Kurtotic & K & 
$\frac{2}{3}\mathcal{N}\left(\bm{0}; \frac{9}{16}(1,1,4)\right) + 
 \frac{1}{3}\mathcal{N}\left(\bm{0}; \frac{9}{16}(\frac{4}{9}, -\frac{1}{3}, \frac{4}{9})
 \right)$\\
Bimodal & Bi & 
$\frac{1}{2}\mathcal{N}\left((-1,0); (\frac{4}{9}, \frac{2}{9}, \frac{4}{9})\right) + 
 \frac{1}{2}\mathcal{N}\left((1,0); (\frac{4}{9}, \frac{2}{9}, \frac{4}{9})\right)$\\
Bimodal 2 & Bi2 & 
$\frac{1}{2}\mathcal{N}\left((-1,1); (\frac{4}{9}, \frac{1}{3}, \frac{4}{9})\right) + 
 \frac{1}{2}\mathcal{N}\left(\bm{0}; \frac{4}{9}\bm{I})\right)$\\
Asymmetric Bimodal & ABi & 
$\frac{1}{2}\mathcal{N}\left((1,-1); (\frac{4}{9}, \frac{14}{45}, \frac{4}{9})\right) + 
 \frac{1}{2}\mathcal{N}\left((-1,1); \frac{4}{9}\bm{I}\right)$\\
Trimodal & T &
$\frac{3}{7}\mathcal{N}\left((-1,0); \frac{1}{25}(9, \frac{63}{10}, \frac{49}{4})\right) + 
 \frac{3}{7}\mathcal{N}\left((1,\frac{2}{\sqrt{3}}); \frac{1}{25}(9,0,\frac{49}{4})\right) +
 \frac{1}{7}\mathcal{N}\left((1,-\frac{2}{\sqrt{3}}); \frac{1}{25}(9,0,\frac{49}{4})\right)$\\
Fountain & F &
$\frac{1}{2}\mathcal{N}\left(\bm{0}; \bm{I}\right) + 
 \frac{1}{10}\mathcal{N}\left(\bm{0}; \frac{1}{16}\bm{I})\right) +
 \sum_{i,j=1}^{2}\frac{1}{10}\mathcal{N}\left(((-1)^i, (-1)^j); \frac{1}{16}\bm{I}
 \right)$\\
Double Fountain & DF &
$\frac{12}{25}\mathcal{N}\left((-\frac{3}{2},0);(\frac{4}{9}, \frac{4}{15}, \frac{4}{9})
 \right) + 
 \frac{12}{25}\mathcal{N}\left((\frac{3}{2},0);(\frac{4}{9}, \frac{4}{15}, \frac{4}{9})\right) +
 \frac{8}{350}\mathcal{N}\left(\bm{0};\frac{1}{9}(1, \frac{3}{5}, 1)\right) +
 \sum_{i=-1}^{1}\frac{1}{350}\mathcal{N}\left((i-\frac{3}{2}, i);\frac{1}{15}(\frac{1}{15},
  \frac{1}{25}, \frac{1}{15})\right) +
 \sum_{j=-1}^{1}\frac{1}{350}\mathcal{N}\left(j+\frac{3}{2},j);\frac{1}{15}(\frac{1}{15},
  \frac{1}{25}, \frac{1}{15})\right)$\\
Asymmetric Fountain & AF &
$\frac{1}{2}\mathcal{N}\left(\bm{0};\bm{I}\right) + 
 \frac{3}{40}\mathcal{N}\left(\bm{0};\frac{1}{16}(1, -\frac{9}{10},1)\right) +
 \frac{1}{5}\mathcal{N}\left((1,1);\frac{1}{4}(1, -\frac{9}{10},1)\right)+
 \frac{3}{40}\mathcal{N}\left((-1,1);\frac{1}{8}\bm{I}\right) +
 \frac{3}{40}\mathcal{N}\left((-1,-1);\frac{1}{8}(1, -\frac{9}{10},1)\right)+
 \frac{3}{40}\mathcal{N}\left((1,-1);\frac{1}{16}\bm{I})\right)$\\
\end{tabular} 
\caption{Definition of bi-dimensional testing densities}
\label{tab:PDFtest2D}
\end{table}

\begin{figure}
	\begin{center}
		\includegraphics[scale=0.8]{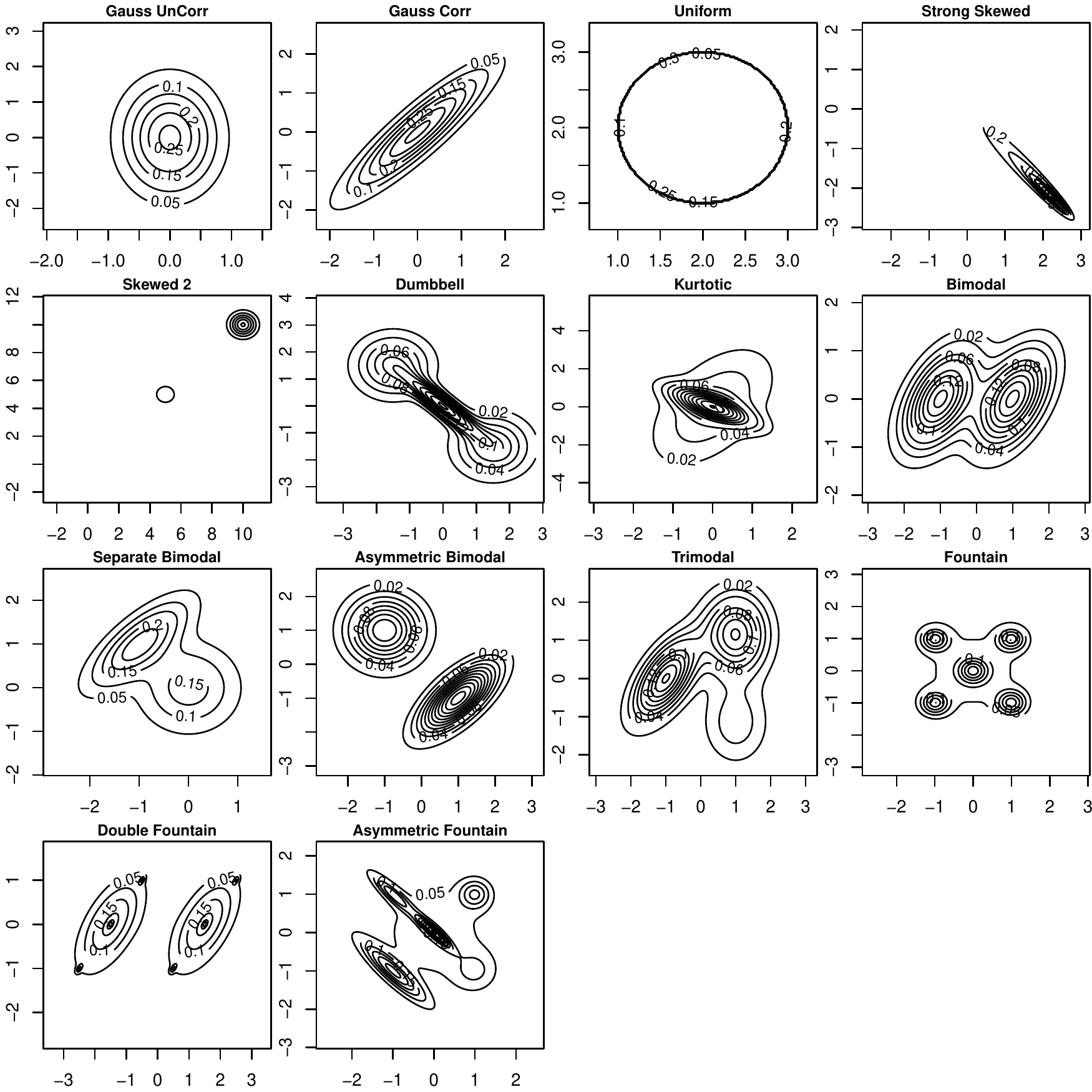}
		\caption{Representation of bi-dimensional testing densities}
		\label{fig:pdftest2D}
	\end{center}
\end{figure}

\begin{table}
\def\arraystretch{1.2}
\footnotesize
\begin{tabular}[l]{@{}p{2.5cm} c p{10cm}}
Dist. name &Abb. & Distribution\\ 
\hline
Uncorrelated Gauss & UG &
\( \mathcal{N}\left( \bm{0}; (0.25, 0, 0, 1, 0, 1)\right) \)\\
Correlated Gauss & CG & 
\( \mathcal{N}\left( \bm{0}; (1, 0.9, 0.9, 1, 0.9, 1) \right) \)\\
Uniform & U & 
$\mathcal{U}(\{ \bm{x} \quad|\quad  \Vert \bm{x}-\bm{a}\Vert^2 \leq r^2, \bm{a}=(2,2,2), r=1\})$\\
Strong Skewed & Sk+ &
$\sum_{l=0}^{7} \frac{1}{8}\mathcal{N}\left( 
 \left( m_1, m_2, m_3 \right); (\sigma_{11}, \sigma_{21}, \sigma_{31}, \sigma_{22}, \sigma_{32}, \sigma_{33})\right)$ with $m_j = 3(-1)^{j+1}\left(1-(\frac{4}{5})^l\right)$, \( \sigma_{jj}=(\frac{4}{5})^{2l} \) and \( \sigma_{jk}=-\frac{9}{10}(\frac{4}{5})^{2(l-1)} \) for \(j\neq k\)\\
Skewed & Sk &
$\frac{1}{5}\mathcal{N}\left( \bm{0}; \bm{I} \right) + 
 \frac{1}{5}\mathcal{N}\left( \bm{5} ; \frac{4}{9}\bm{I}\right) + 
 \frac{3}{5}\mathcal{N}\left( \bm{10}; \frac{25}{81}\bm{I}
 \right)$\\
Dumbbell & D &
$\frac{4}{11}\mathcal{N}\left( (-\frac{3}{2}, \frac{3}{2}, -\frac{3}{2}); \frac{9}{16}\bm{I} \right) + 
 \frac{4}{11}\mathcal{N}\left( (\frac{3}{2}, -\frac{3}{2}, \frac{3}{2}); \frac{9}{16}\bm{I} \right) +
 \frac{3}{11}\mathcal{N}\left( \bm{0}; \frac{9}{16}(\frac{4}{5}, -\frac{18}{25}, -\frac{18}{25}, \frac{4}{5}, -\frac{18}{25}, \frac{4}{5})
	\right)$\\
Kurtotic & K & 
$\frac{2}{3}\mathcal{N}\left(\bm{0}; (1,1,1,4,1,4)\right) + 
 \frac{1}{3}\mathcal{N}\left(\bm{0}; (\frac{4}{9}, -\frac{1}{3}, -\frac{1}{3}, \frac{4}{9}, -\frac{1}{3}, \frac{4}{9})
 \right)$\\
Bimodal & Bi & 
$\frac{1}{2}\mathcal{N}\left((-1,0,0); (\frac{4}{9}, \frac{2}{9}, \frac{2}{9}, \frac{4}{9}, \frac{2}{9}, \frac{4}{9})\right) + 
 \frac{1}{2}\mathcal{N}\left((1,0,0); (\frac{4}{9}, \frac{2}{9}, \frac{2}{9}, \frac{4}{9}, \frac{2}{9}, \frac{4}{9})\right)$\\
Bimodal 2 & Bi2 & 
$\frac{1}{2}\mathcal{N}\left((-1,1,1); (\frac{4}{9}, \frac{1}{3}, \frac{1}{3},  \frac{4}{9}, \frac{1}{3}, \frac{4}{9})\right) + 
 \frac{1}{2}\mathcal{N}\left(\bm{0}; \frac{4}{9}\bm{I})\right)$\\
Asymmetric Bimodal & ABi & 
$\frac{1}{2}\mathcal{N}\left((1,-1,1); (\frac{4}{9}, \frac{14}{45}, \frac{14}{45}, \frac{4}{9}, \frac{14}{45}, \frac{4}{9})\right) + 
 \frac{1}{2}\mathcal{N}\left((-1,1,-1); \frac{4}{9}\bm{I}\right)$\\
Trimodal & T &
$\frac{3}{7}\mathcal{N}\left((-1,0,0); \frac{1}{25}(9, \frac{63}{10}, \frac{63}{10}, \frac{49}{4}, \frac{63}{10}, \frac{49}{4})\right) + 
 \frac{3}{7}\mathcal{N}\left((1,\frac{2}{\sqrt{3}},\frac{2}{\sqrt{3}}); \frac{1}{25}(9,0,0,\frac{49}{4},0, \frac{49}{4}) \right) +
 \frac{1}{7}\mathcal{N}\left((1,-\frac{2}{\sqrt{3}},-\frac{2}{\sqrt{3}}); \frac{1}{25}(9,0,0,\frac{49}{4},0,\frac{49}{4})\right)$\\
Fountain & F &
$\frac{1}{2}\mathcal{N}\left(\bm{0}; \bm{I}\right) + 
 \frac{1}{18}\mathcal{N}\left(\bm{0}; \frac{1}{16}\bm{I})\right) +
 \sum_{i,j,k=1}^{2}\frac{1}{18}\mathcal{N}\left(((-1)^i, (-1)^j, (-1)^k); \frac{1}{16}\bm{I}
 \right)$\\
Double Fountain & DF &
$\frac{12}{25}\mathcal{N}\left((-\frac{3}{2},0,0);(\frac{4}{9}, \frac{4}{15}, \frac{4}{15}, \frac{4}{9}, \frac{4}{15},\frac{4}{9})\right) + 
 \frac{12}{25}\mathcal{N}\left((\frac{3}{2},0,0);(\frac{4}{9}, \frac{4}{15}, \frac{4}{15}, \frac{4}{9}, \frac{4}{15},\frac{4}{9})\right) +
\frac{8}{350}\mathcal{N}\left(\bm{0};\frac{1}{9}(1, \frac{3}{5}, \frac{3}{5}, 1, \frac{3}{5}, 1)\right) +
\sum_{i=-1}^{1}\frac{1}{350}\mathcal{N}\left((i-\frac{3}{2}, i, i);\frac{1}{15}(\frac{1}{15},\frac{1}{25}, \frac{1}{25}, \frac{1}{15}, \frac{1}{25}, \frac{1}{15})\right) +
\sum_{j=-1}^{1}\frac{1}{350}\mathcal{N}\left(j+\frac{3}{2},j, j);\frac{1}{15}(\frac{1}{15},\frac{1}{25}, \frac{1}{25}, \frac{1}{15}, \frac{1}{25}, \frac{1}{15})\right)$\\
Asymmetric Fountain & AF &
$\frac{1}{2}\mathcal{N}\left(\bm{0};\bm{I}\right) + 
 \frac{3}{40}\mathcal{N}\left(\bm{0};\frac{1}{16}(1, -\frac{9}{10}, -\frac{9}{10}, 1,-\frac{9}{10}, 1)\right) +
 \frac{1}{5}\mathcal{N}\left((-1,-1,-1);\frac{1}{4}(1, -\frac{9}{10}, -\frac{9}{10}, 1,-\frac{9}{10}, 1)\right) +
%
 \sum_{k=1}^{4}\frac{9}{280}\mathcal{N}\left(((-1)^{2k}, (-1)^{(2k+1)\text{div}2}, (-1)^{(2k+3)\text{div}4}); \frac{1}{2^{k+2}}(1, -\frac{9}{10}, -\frac{9}{10}, 1,-\frac{9}{10}, 1) \right) +
 \sum_{k=1}^{3}\frac{9}{280}\mathcal{N}\left(((-1)^{2k+1}, (-1)^{(2k+2)\text{div}2}, (-1)^{(2k+4)\text{div}4}); \frac{1}{2^{k+2}}\bm{I} \right)$ with \(\text{div}\) the integer division\\
\end{tabular} 
\caption{Definition of tri-dimensional testing densities}\label{tab:PDFtest3D}
\end{table}

\begin{figure}
	\begin{center}	
		\subfloat[Uncorrelated Gauss]{
			\resizebox*{3cm}{!}{
				\includegraphics[trim = 1cm 1.2cm 0cm 1.5cm, clip, scale=0.9]
				{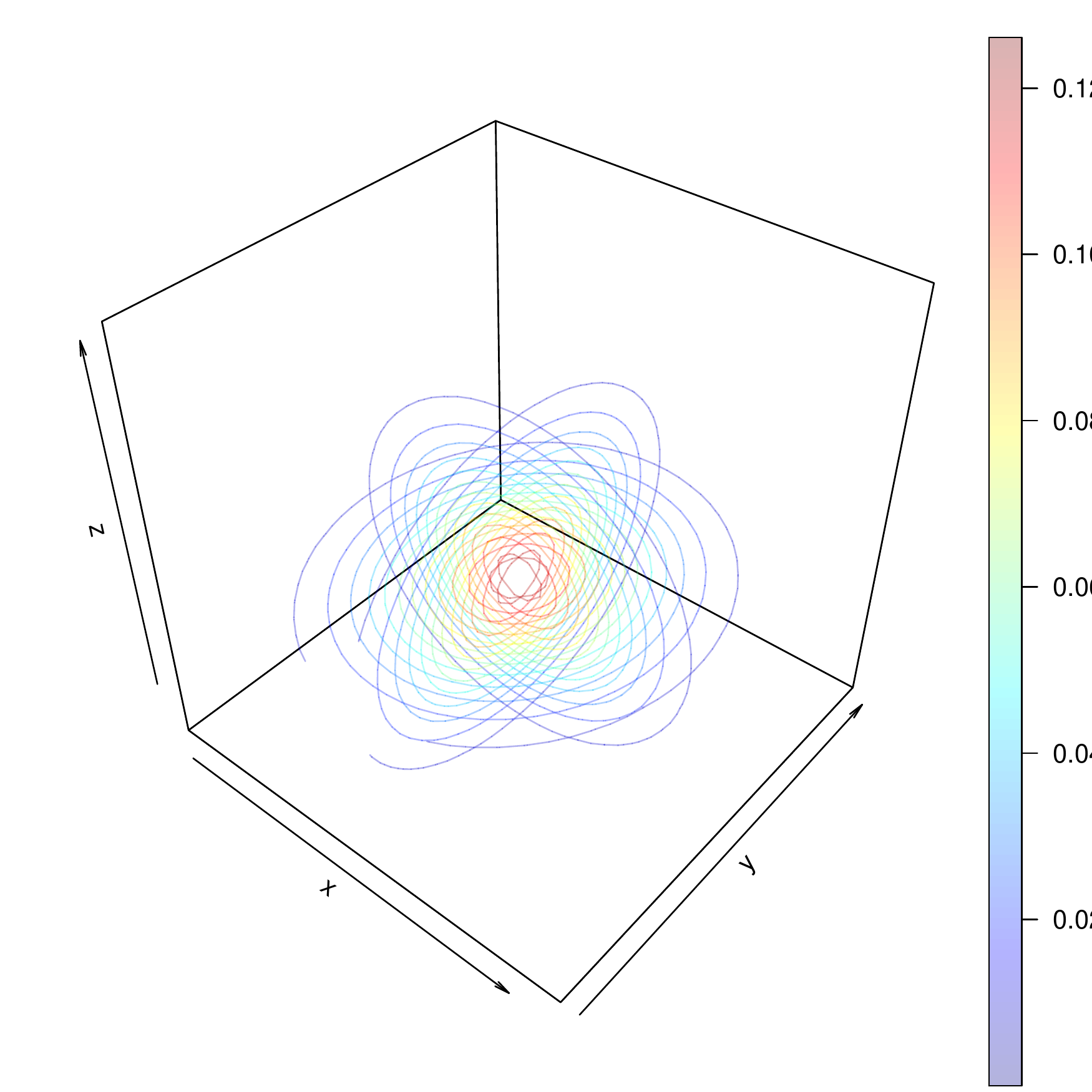}}}
		\subfloat[Correlated Gauss]{
			\resizebox*{3cm}{!}{
				\includegraphics[trim = 1cm 1.2cm 0cm 1.5cm, clip]		
				{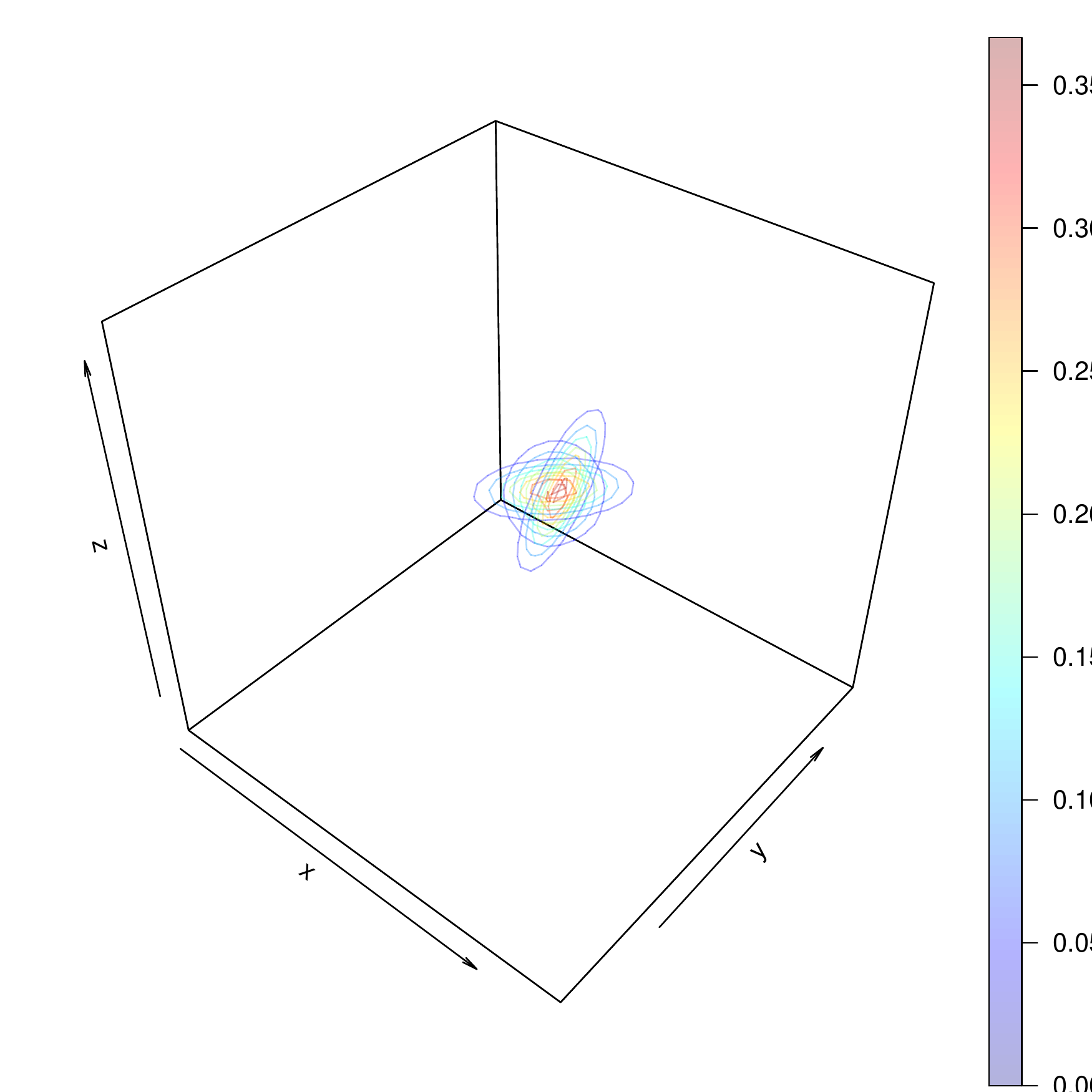}}}
				
		%
		
		\subfloat[Uniform]{
			\resizebox*{3cm}{!}{
				\includegraphics[trim = 1cm 1.2cm 0cm 1.5cm, clip]
				{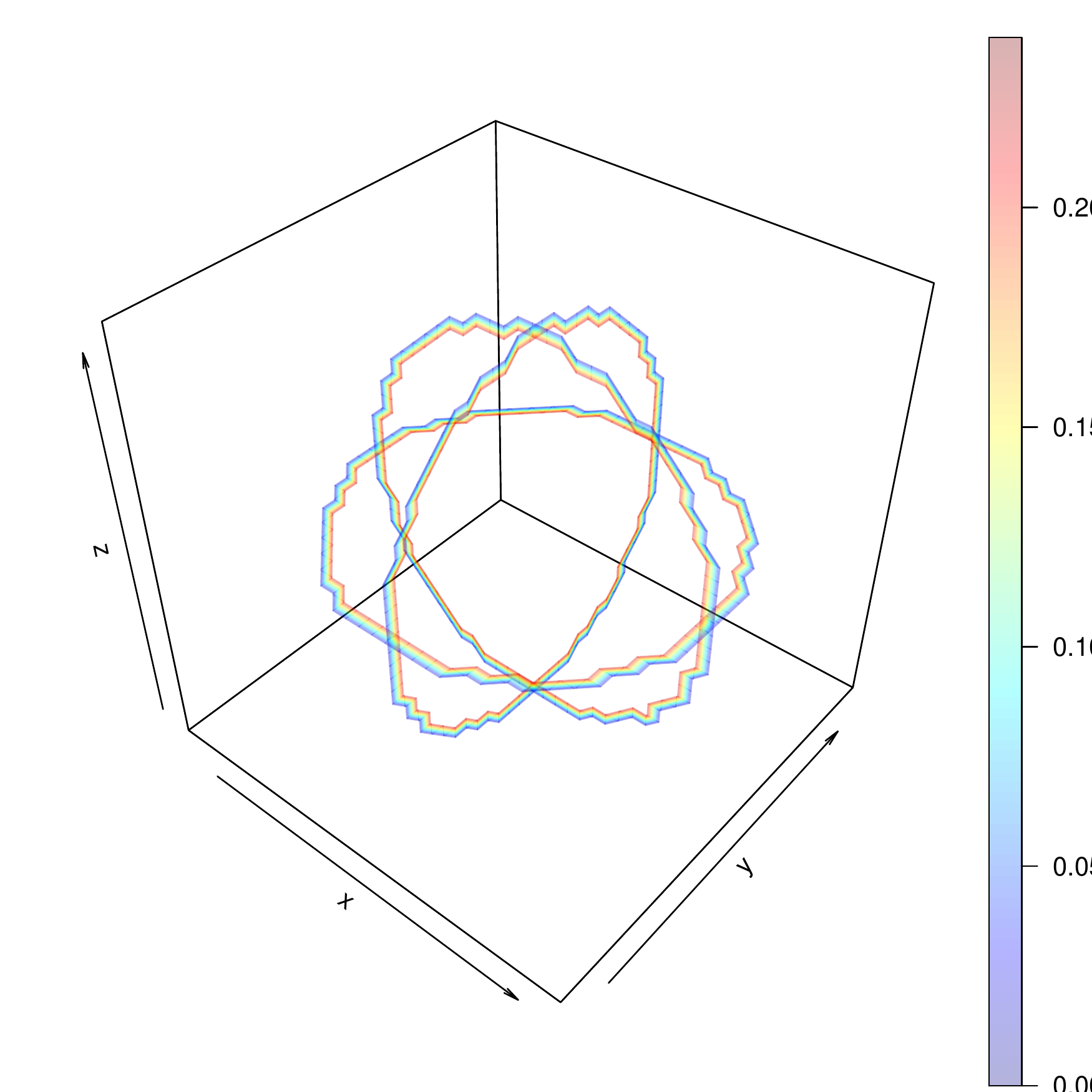}}}
		\subfloat[Strong Skewed]{
			\resizebox*{3cm}{!}{
				\includegraphics[trim = 1cm 1.2cm 0cm 1.5cm, clip]
				{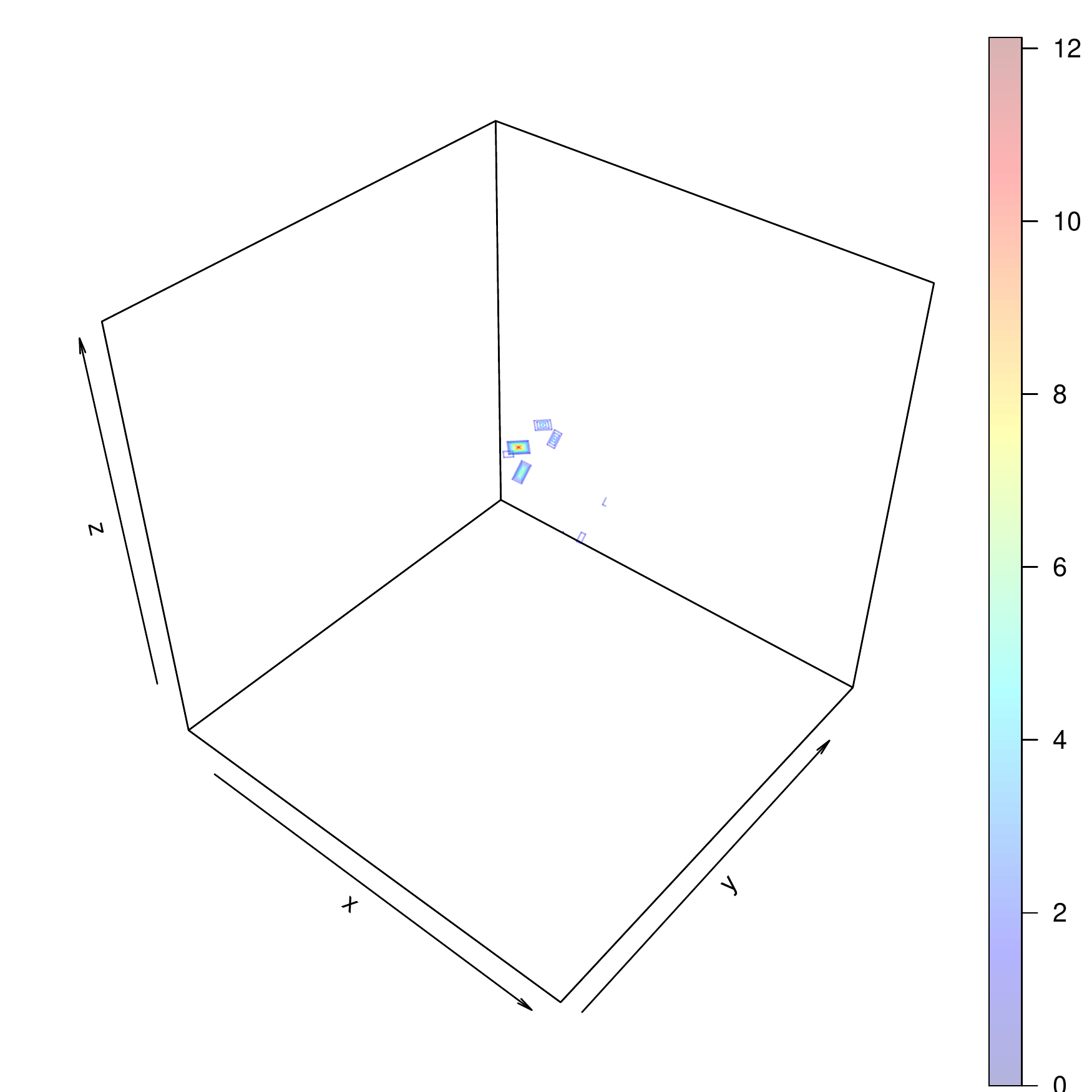}}}
		\subfloat[Skewed]{
			\resizebox*{3cm}{!}{
				\includegraphics[trim = 1cm 1.2cm 0cm 1.5cm, clip]
				{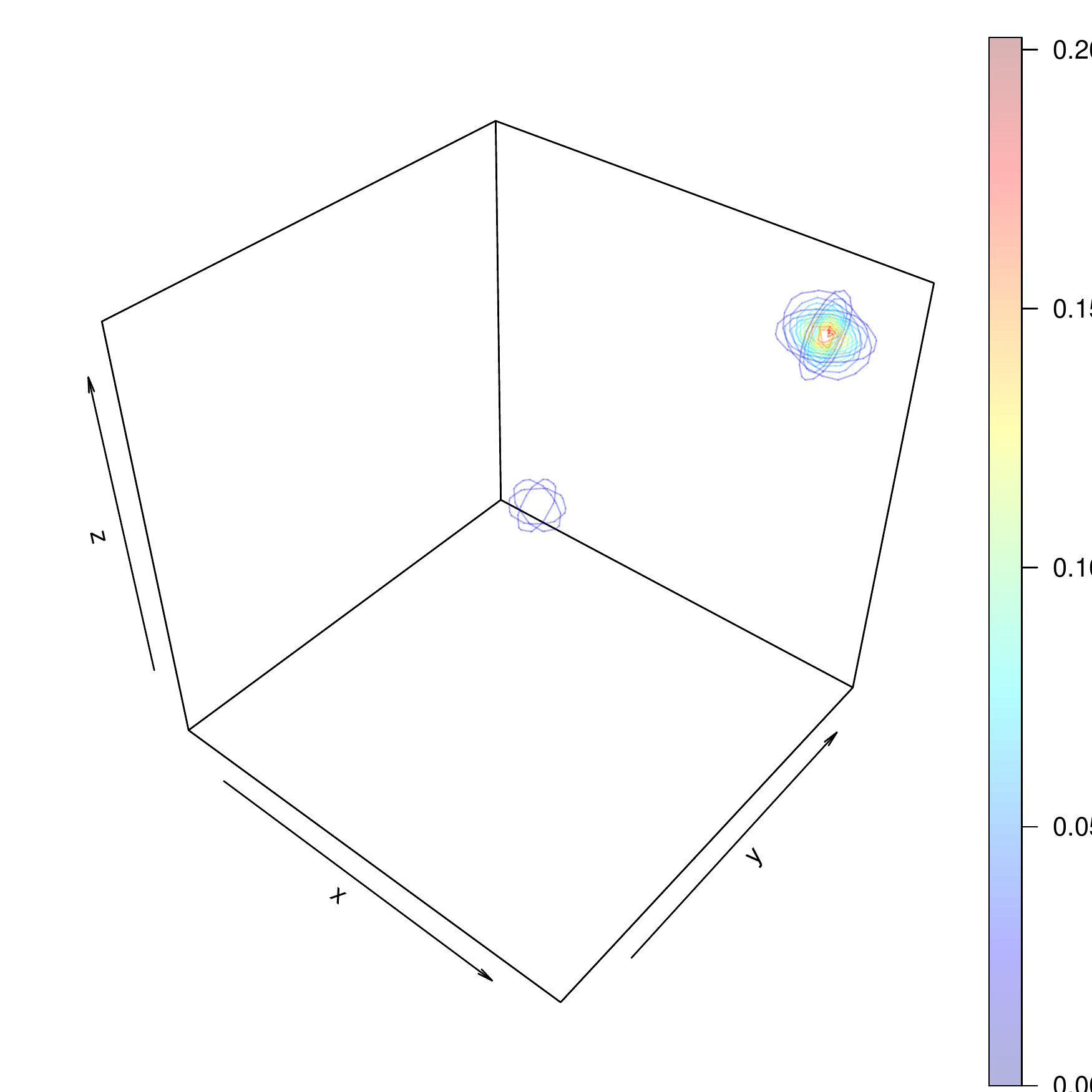}}}
		\subfloat[Dumbbell]{
			\resizebox*{3cm}{!}{
				\includegraphics[trim = 1cm 1.2cm 0cm 1.5cm, clip]
				{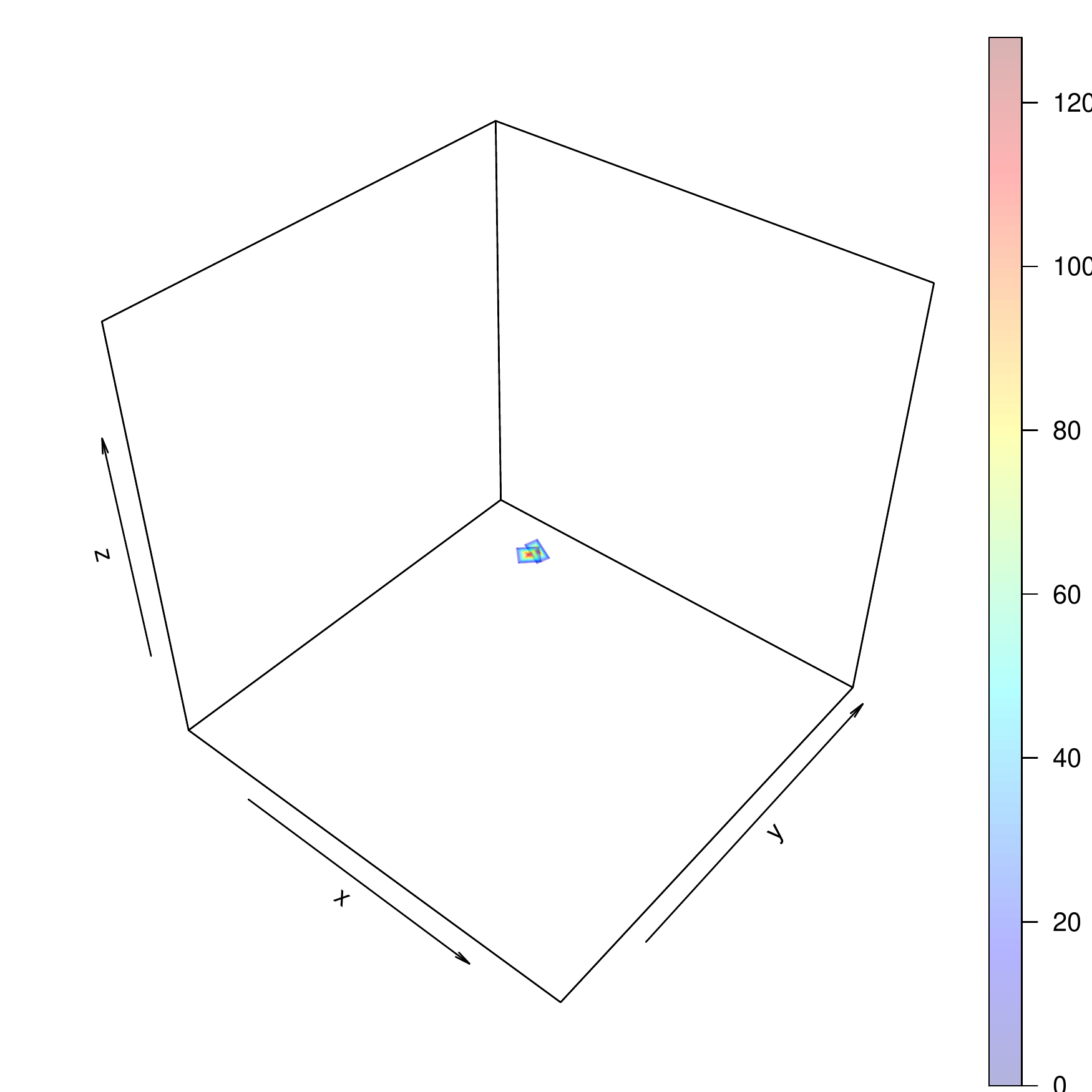}}}
				
		\subfloat[Kurtotic]{
			\resizebox*{3cm}{!}{
				\includegraphics[trim = 1cm 1.2cm 0cm 1.5cm, clip]
				{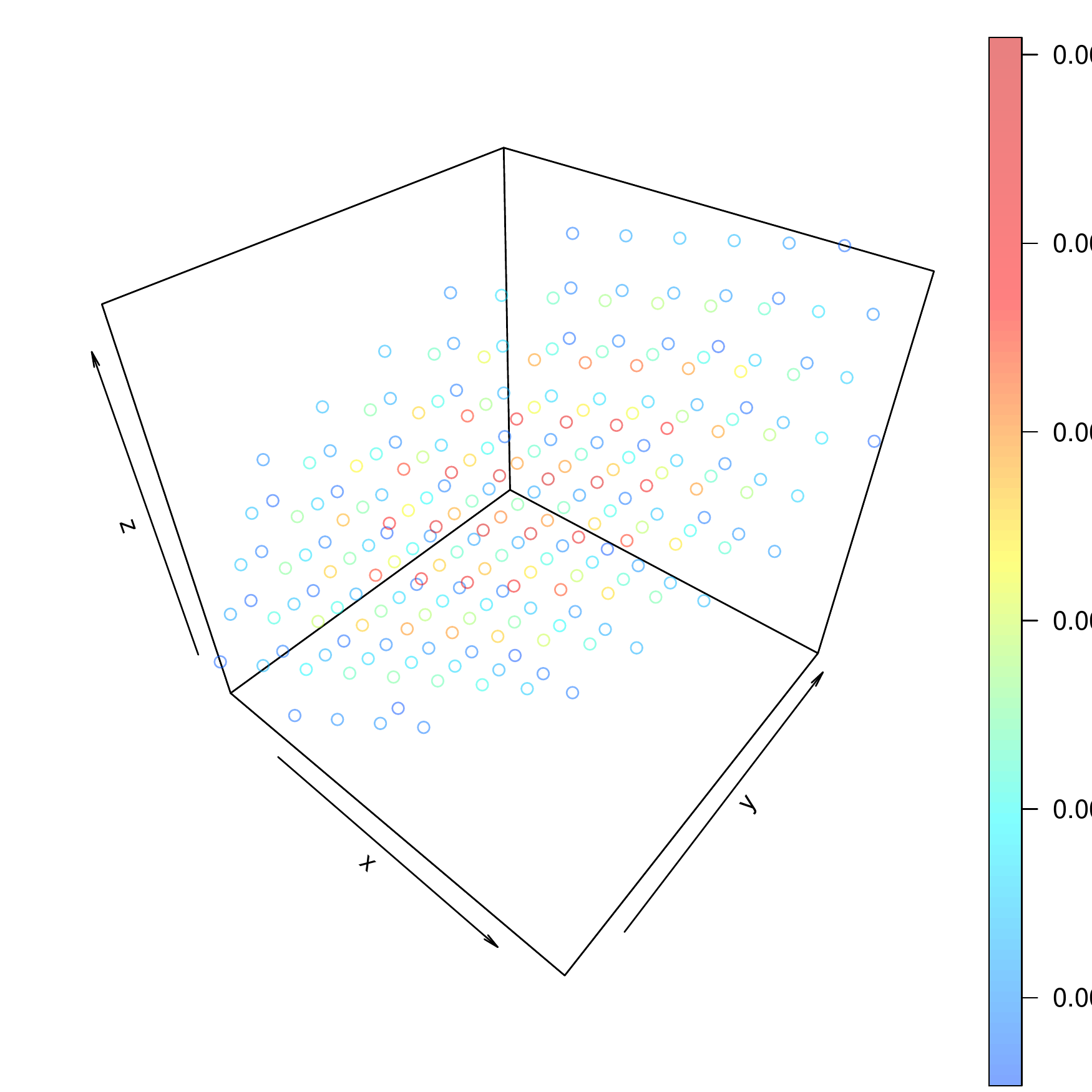}}}
		\subfloat[Bimodal]{
			\resizebox*{3cm}{!}{
				\includegraphics[trim = 1cm 1.2cm 0cm 1.5cm, clip]
				{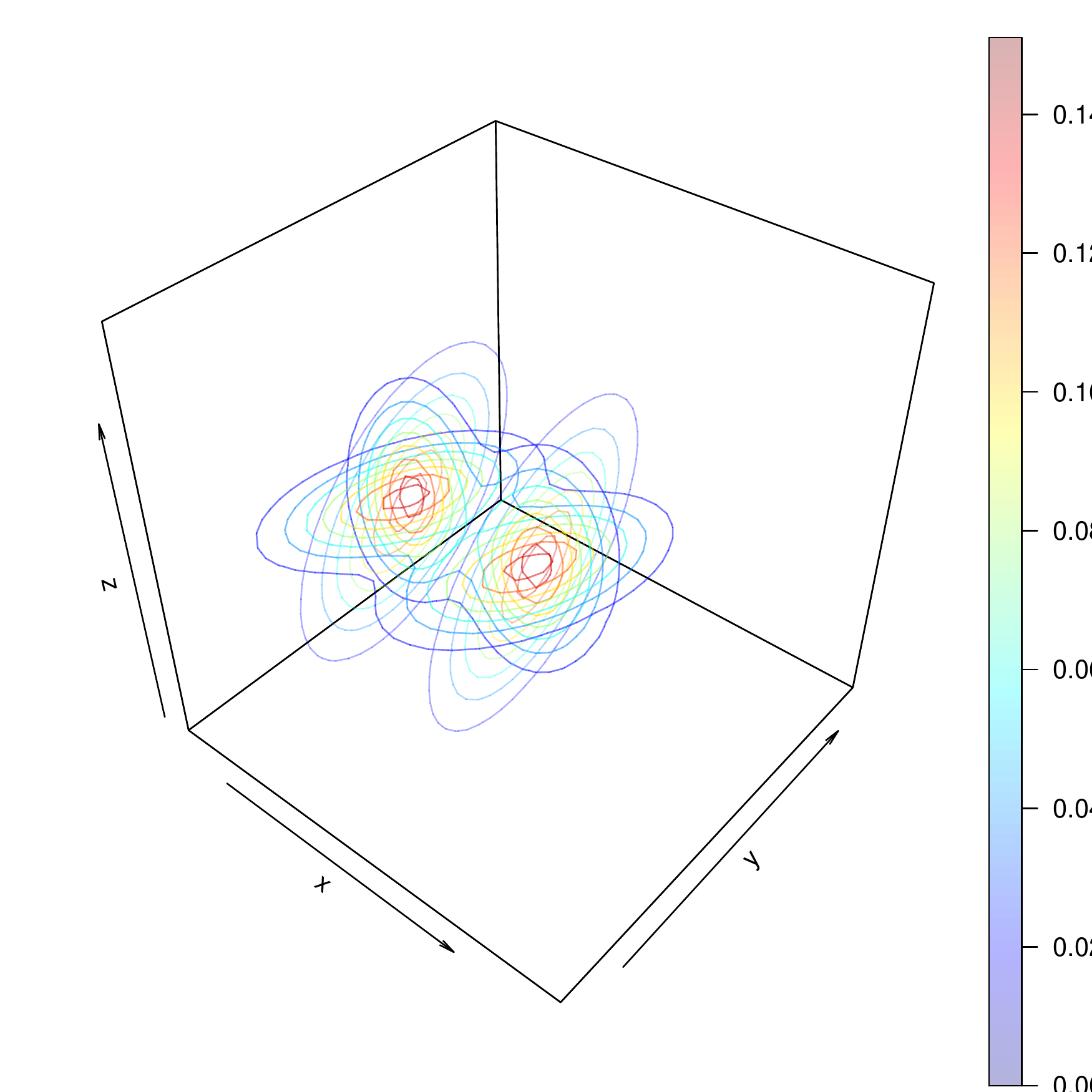}}}
		\subfloat[Separate Bimodal]{
			\resizebox*{3cm}{!}{
				\includegraphics[trim = 1cm 1.2cm 0cm 1.5cm, clip]
				{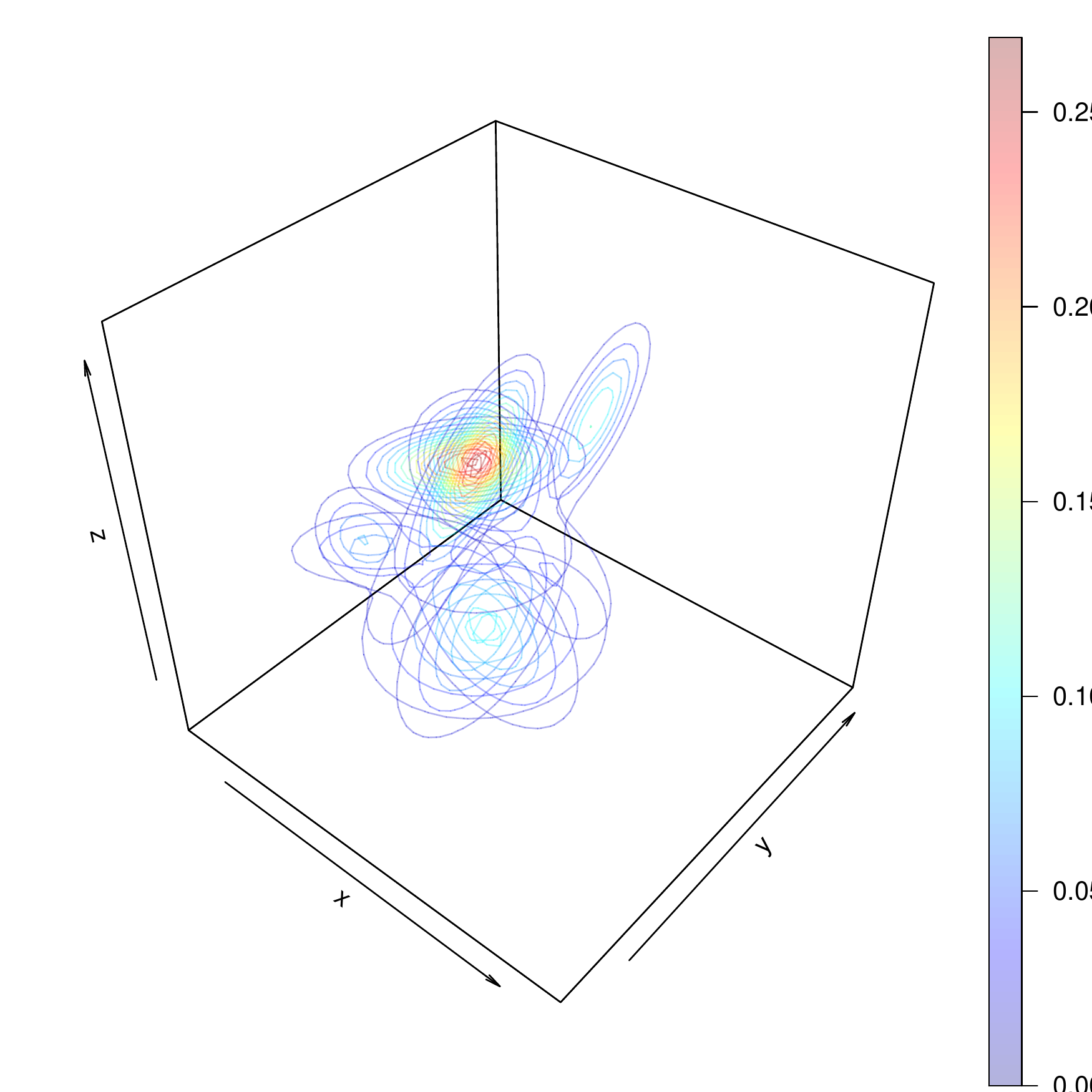}}}
		\subfloat[Asymmetric Bimodal]{
			\resizebox*{3cm}{!}{
				\includegraphics[trim = 1cm 1.2cm 0cm 1.5cm, clip]
				{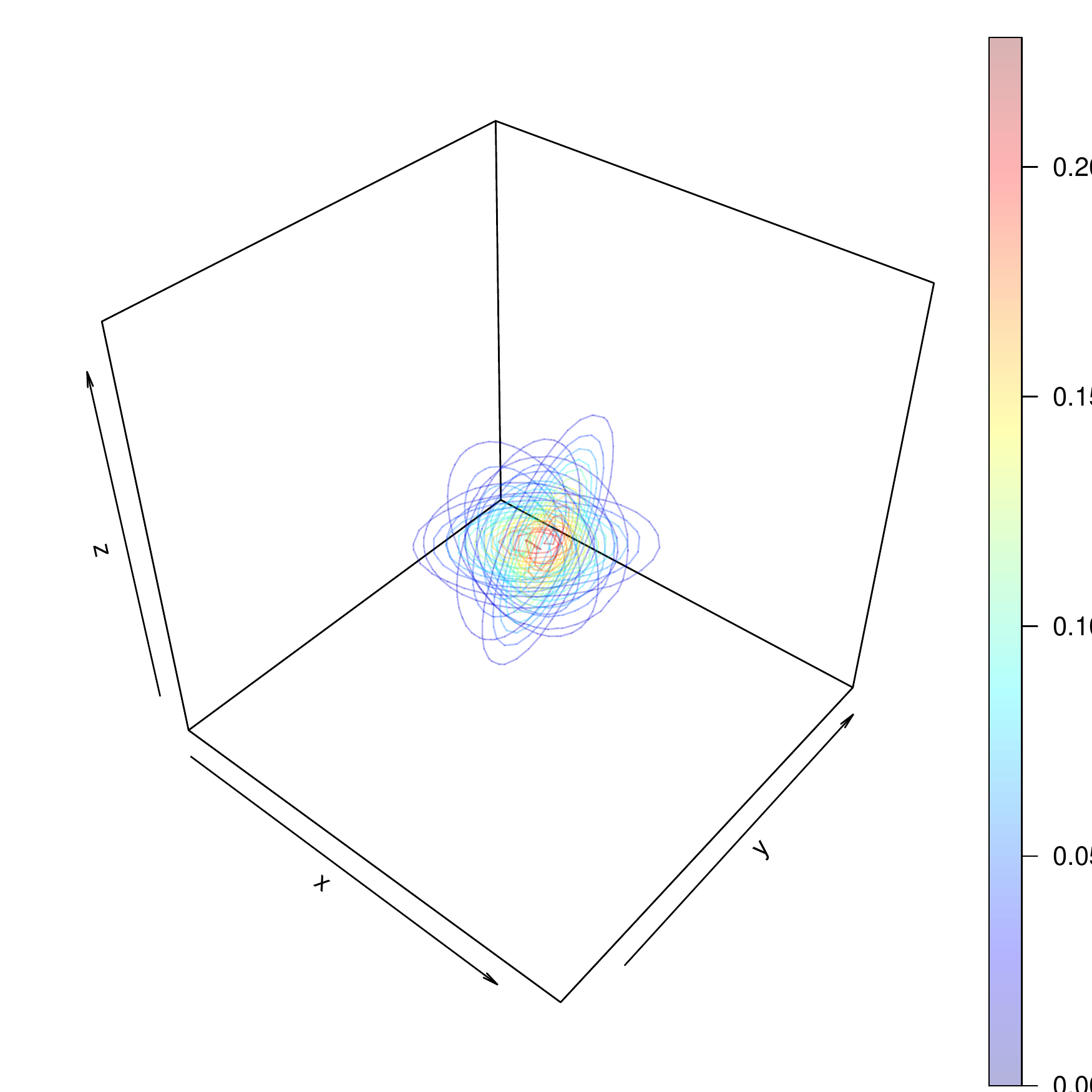}}}
				
		\subfloat[Trimodal]{
			\resizebox*{3cm}{!}{
				\includegraphics[trim = 1cm 1.2cm 0cm 1.5cm, clip]
				{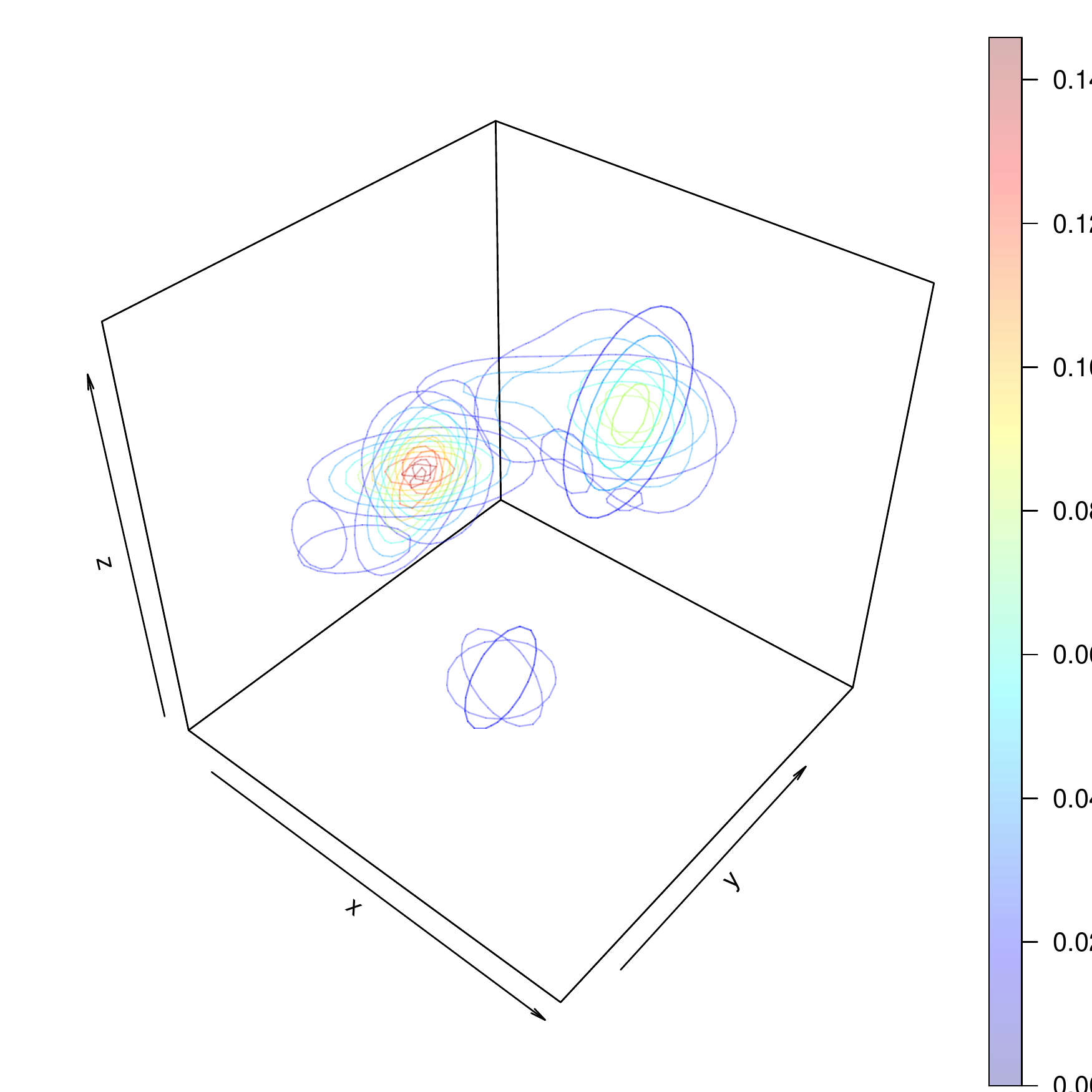}}}
		\subfloat[Fountain]{
			\resizebox*{3cm}{!}{
				\includegraphics[trim = 1cm 1.2cm 0cm 1.5cm, clip]
				{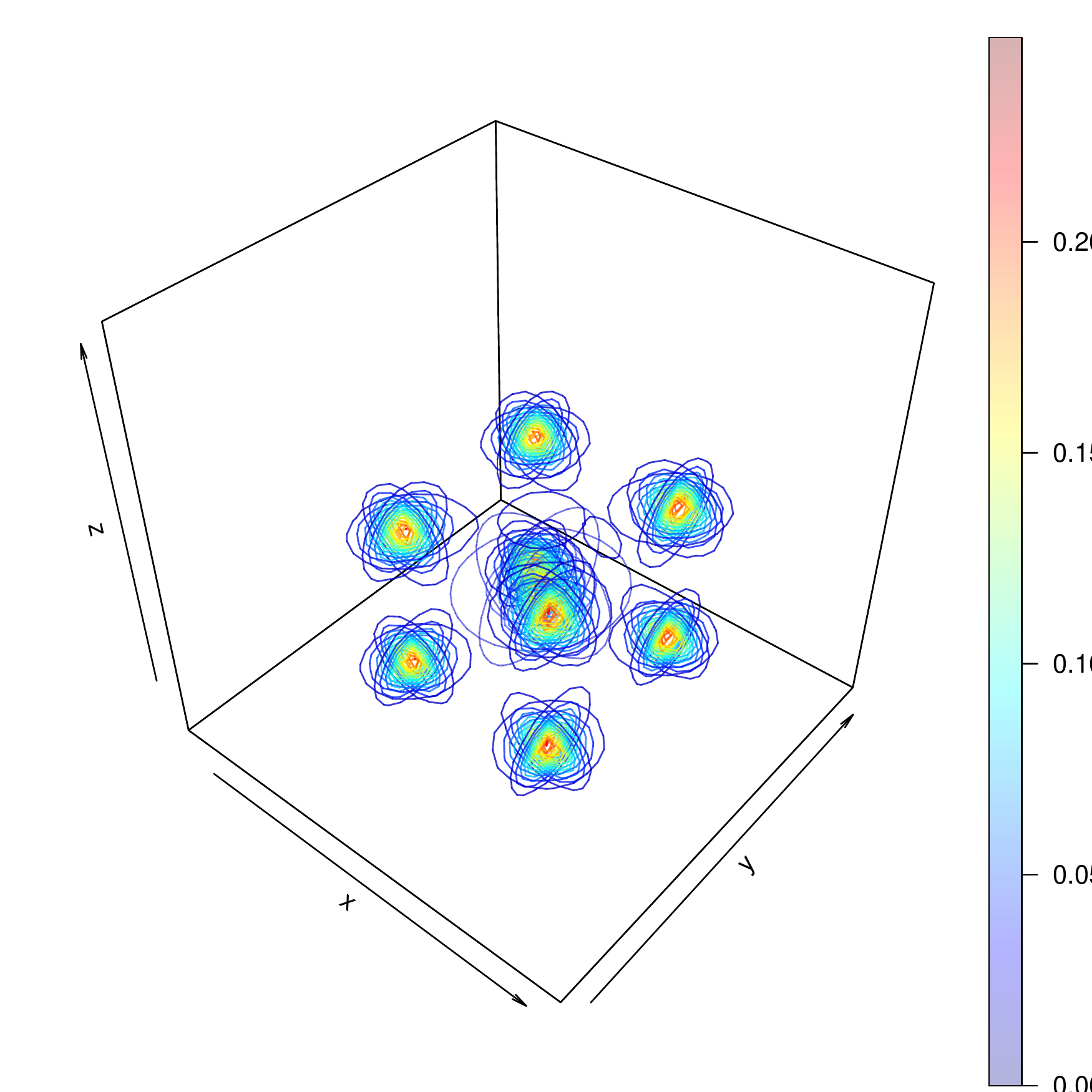}}}
		\subfloat[Double Fountain]{
			\resizebox*{3cm}{!}{
				\includegraphics[trim = 1cm 1.2cm 0cm 1.5cm, clip]
				{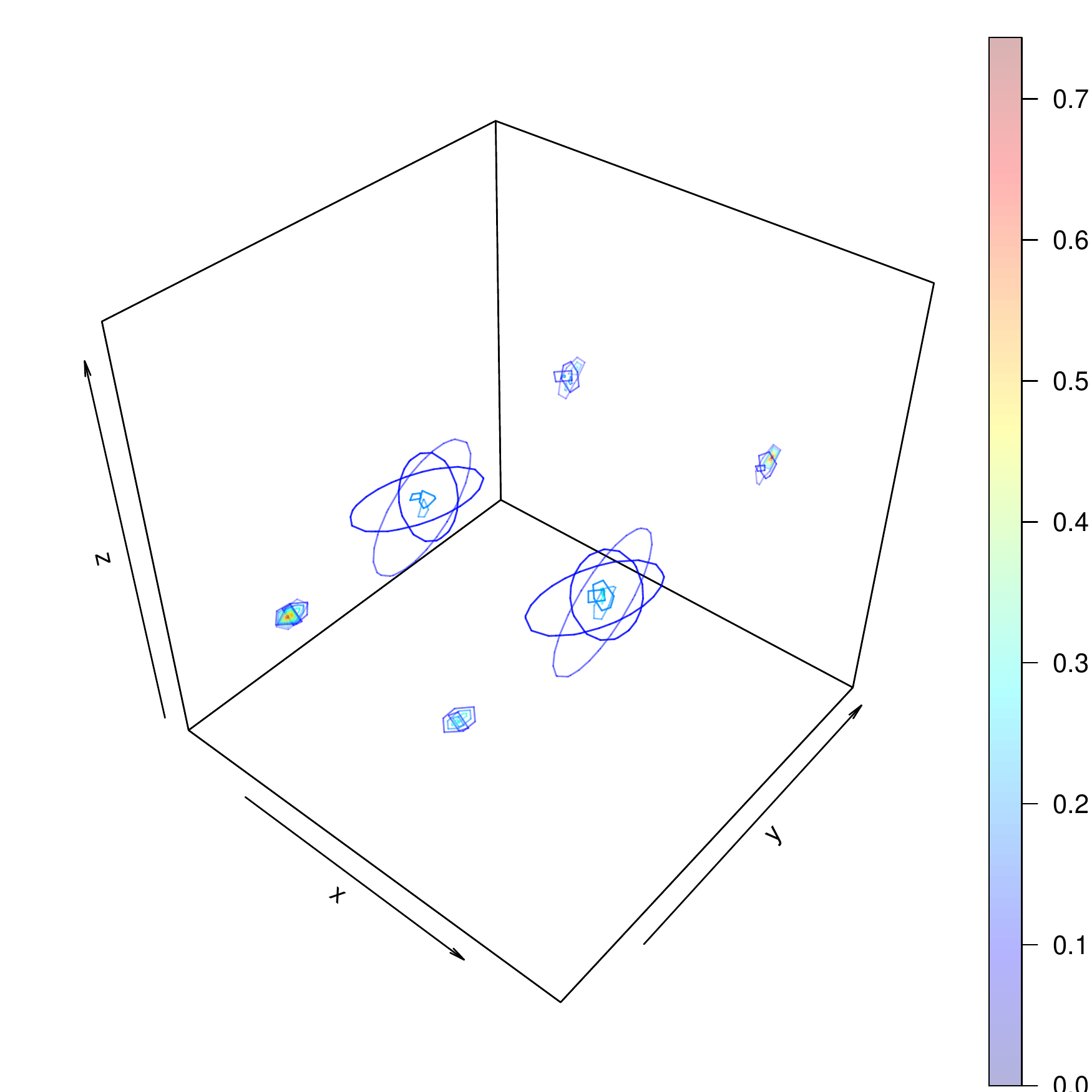}}}
		\subfloat[Asymmetric Fountain]{
			\resizebox*{3cm}{!}{
				\includegraphics[trim = 1cm 1.2cm 0cm 1.5cm, clip]
				{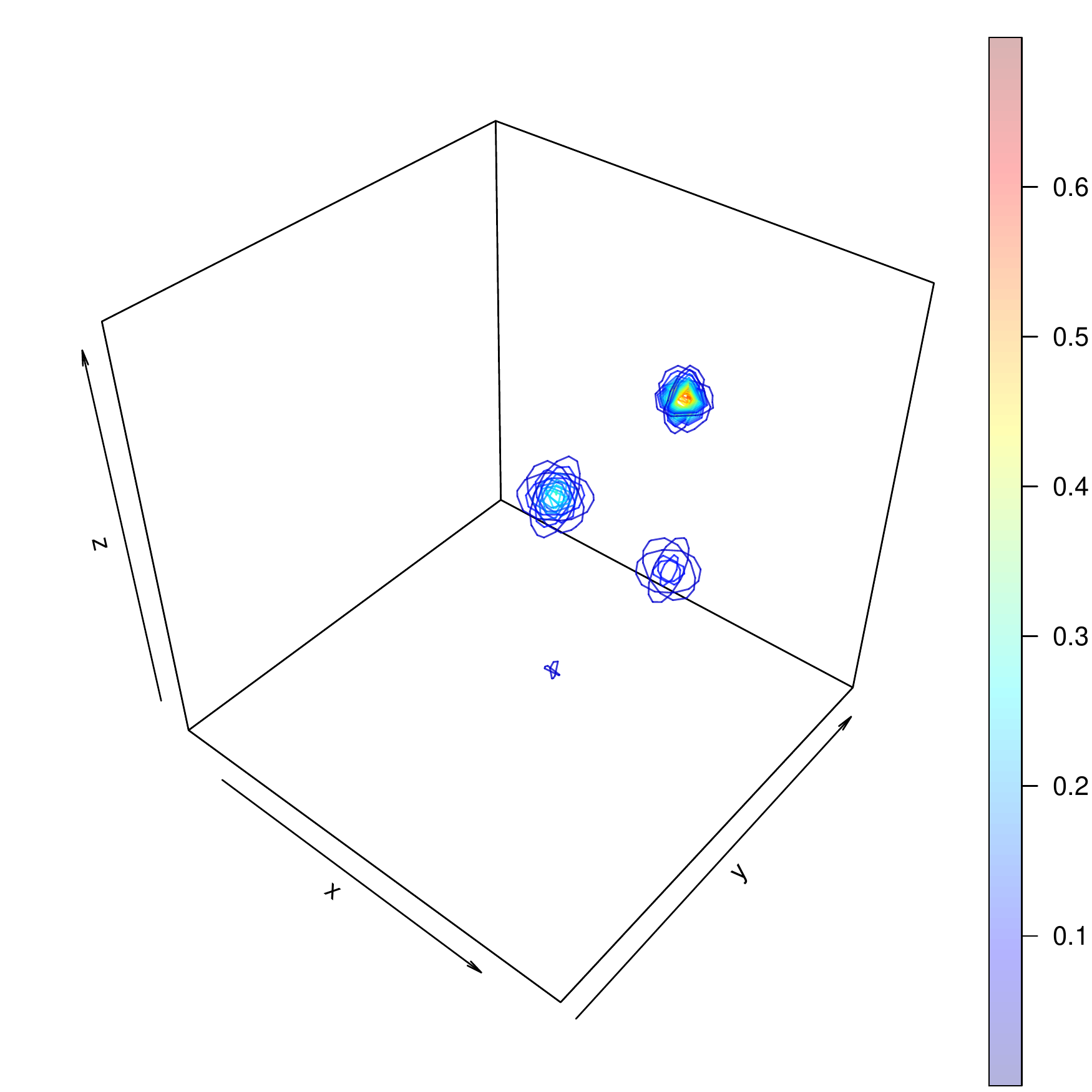}}}
		
		\caption{Representation of tri-dimensional testing densities}\label{fig:pdftest3D}
	\end{center}
\end{figure}
\begin{table}
\def\arraystretch{1.2}
\footnotesize
\begin{tabular}[l]{@{}p{2.5cm} c p{10cm}}
Dist. name &Abb. & Distribution\\ 
\hline
Uncorrelated Gauss & UG &
\( \mathcal{N}\left( \bm{0}; (0.25, 0, 0, 0, 0.25, 0, 0, 1, 0, 1)\right) \)\\
Correlated Gauss & CG & 
\( \mathcal{N}\left( \bm{0}; (1, 0.9, 0.9, 0.9, 1, 0.9, 0.9, 1, 0.9, 1) \right) \)\\
Uniform & U & 
$\mathcal{U}(\{ \bm{x} \quad|\quad  \Vert \bm{x}-\bm{a}\Vert^2 \leq r^2, \bm{a}=(2,2,2,2), r=1\})$\\
Strong Skewed & Sk+ &
$\sum_{l=0}^{7} \frac{1}{8}\mathcal{N}\left( 
 \left( m_1, m_2, m_3, m_4 \right); (\sigma_{11}, \sigma_{21}, \sigma_{31}, \sigma_{41}, \sigma_{22}, \sigma_{32}, \sigma_{42}, \sigma_{33}, \sigma_{43}, \sigma_{44})\right)$ with $m_j = 3(-1)^{j+1}\left(1-(\frac{4}{5})^l\right)$, \( \sigma_{jj}=(\frac{4}{5})^{2l} \) and \( \sigma_{jk}=-\frac{9}{10}(\frac{4}{5})^{2(l-1)} \) for \(j\neq k\)\\
Skewed & Sk &
$\frac{1}{5}\mathcal{N}\left( \bm{0}; \bm{I} \right) + 
 \frac{1}{5}\mathcal{N}\left( \bm{5} ; \frac{4}{9}\bm{I}\right) + 
 \frac{3}{5}\mathcal{N}\left( \bm{10}; \frac{25}{81}\bm{I}
 \right)$\\
Dumbbell & D &
$\frac{4}{11}\mathcal{N}\left( (-\frac{3}{2}, \frac{3}{2}, -\frac{3}{2}, \frac{3}{2}); \frac{9}{16}\bm{I} \right) + 
 \frac{4}{11}\mathcal{N}\left( (\frac{3}{2}, -\frac{3}{2}, \frac{3}{2}, -\frac{3}{2}); \frac{9}{16}\bm{I} \right) +
 \frac{3}{11}\mathcal{N}\left( \bm{0}; \frac{9}{16}(\frac{4}{5}, -\frac{18}{25}, -\frac{18}{25}, -\frac{18}{25}, \frac{4}{5}, -\frac{18}{25}, -\frac{18}{25}, \frac{4}{5}, -\frac{18}{25}, \frac{4}{5})
	\right)$\\
Kurtotic & K & 
$\frac{2}{3}\mathcal{N}\left(\bm{0}; (1,1,1,1,4,1,1,4,1,4)\right) + 
 \frac{1}{3}\mathcal{N}\left(\bm{0}; (\frac{4}{9}, -\frac{1}{3}, -\frac{1}{3}, -\frac{1}{3}, \frac{4}{9}, -\frac{1}{3}, -\frac{1}{3}, \frac{4}{9}, -\frac{1}{3}, \frac{4}{9})
 \right)$\\
Bimodal & Bi & 
$\frac{1}{2}\mathcal{N}\left((-1,0,0,0); (\frac{4}{9}, \frac{2}{9}, \frac{2}{9}, \frac{2}{9}, \frac{4}{9}, \frac{2}{9}, \frac{2}{9}, \frac{4}{9}, \frac{2}{9}, \frac{4}{9})\right) + 
 \frac{1}{2}\mathcal{N}\left((1,0,0,0); (\frac{4}{9}, \frac{2}{9}, \frac{2}{9}, \frac{2}{9}, \frac{4}{9}, \frac{2}{9}, \frac{2}{9}, \frac{4}{9}, \frac{2}{9}, \frac{4}{9})\right)$\\
Bimodal 2 & Bi2 & 
$\frac{1}{2}\mathcal{N}\left((-1,1,1,1); (\frac{4}{9}, \frac{1}{3}, \frac{1}{3}, \frac{1}{3},  \frac{4}{9}, \frac{1}{3}, \frac{1}{3}, \frac{4}{9}, \frac{1}{3}, \frac{4}{9})\right) + 
 \frac{1}{2}\mathcal{N}\left(\bm{0}; \frac{4}{9}\bm{I})\right)$\\
Asymmetric Bimodal & ABi & 
$\frac{1}{2}\mathcal{N}\left((1,-1,1,-1); (\frac{4}{9}, \frac{14}{45}, \frac{14}{45}, \frac{14}{45}, \frac{4}{9}, \frac{14}{45}, \frac{14}{45}, \frac{4}{9}, \frac{14}{45}, \frac{4}{9})\right) + 
 \frac{1}{2}\mathcal{N}\left((-1,1,-1,1); \frac{4}{9}\bm{I}\right)$\\
Trimodal & T &
$\frac{3}{7}\mathcal{N}\left((-1,0,0,0); \frac{1}{25}(9, \frac{63}{10}, \frac{63}{10}, \frac{63}{10}, \frac{49}{4}, \frac{63}{10}, \frac{63}{10}, \frac{49}{4}, \frac{63}{10}, \frac{49}{4})\right) + 
 \frac{3}{7}\mathcal{N}\left((1,\frac{2}{\sqrt{3}},\frac{2}{\sqrt{3}}, \frac{2}{\sqrt{3}}); \frac{1}{25}(9,0,0,0,\frac{49}{4},0,0, \frac{49}{4},0,\frac{49}{4}) \right) +
 \frac{1}{7}\mathcal{N}\left((1,-\frac{2}{\sqrt{3}},-\frac{2}{\sqrt{3}}, -\frac{2}{\sqrt{3}}); \frac{1}{25}(9,0,0,0,\frac{49}{4},0,0, \frac{49}{4},0,\frac{49}{4})\right)$\\
Fountain & F &
$\frac{1}{2}\mathcal{N}\left(\bm{0}; \bm{I}\right) + 
 \frac{1}{34}\mathcal{N}\left(\bm{0}; \frac{1}{16}\bm{I})\right) +
 \sum_{i,j,k,l=1}^{2}\frac{1}{34}\mathcal{N}\left(((-1)^i, (-1)^j, (-1)^k, (-1)^l); \frac{1}{16}\bm{I}
 \right)$\\
Double Fountain & DF &
$\frac{12}{25}\mathcal{N}\left((-\frac{3}{2},0,0,0);(\frac{4}{9}, \frac{4}{15}, \frac{4}{15}, \frac{4}{15}, \frac{4}{9}, \frac{4}{15}, \frac{4}{15}, \frac{4}{9}, \frac{4}{15}, \frac{4}{9})\right) + 
 \frac{12}{25}\mathcal{N}\left((\frac{3}{2},0,0,0);(\frac{4}{9}, \frac{4}{15}, \frac{4}{15}, \frac{4}{15}, \frac{4}{9}, \frac{4}{15}, \frac{4}{15}, \frac{4}{9}, \frac{4}{15}, \frac{4}{9})\right) +
\frac{8}{350}\mathcal{N}\left(\bm{0};\frac{1}{9}(1, \frac{3}{5}, \frac{3}{5}, \frac{3}{5}, 1, \frac{3}{5}, \frac{3}{5}, 1, \frac{3}{5}, 1)\right) +
\sum_{i=-1}^{1}\frac{1}{350}
\mathcal{N}\left(
(i-\frac{3}{2}, i, i, i);
\frac{1}{75}(\frac{1}{3}, \frac{1}{5}, \frac{1}{5}, \frac{1}{5}, \frac{1}{3}, \frac{1}{5}, \frac{1}{5}, \frac{1}{3}, \frac{1}{5}, \frac{1}{3})\right) +
\sum_{j=-1}^{1}\frac{1}{350}\mathcal{N}\left(j+\frac{3}{2},j, j, j);\frac{1}{75}(\frac{1}{3}, \frac{1}{5}, \frac{1}{5}, \frac{1}{5}, \frac{1}{3}, \frac{1}{5}, \frac{1}{5}, \frac{1}{3}, \frac{1}{5}, \frac{1}{3})\right)$\\
Asymmetric Fountain & AF &
$\frac{1}{2}\mathcal{N}\left(\bm{0};\bm{I}\right) + 
 \frac{3}{40}\mathcal{N}\left(\bm{0};\frac{1}{16}(1, -\frac{9}{10}, -\frac{9}{10}, -\frac{9}{10}, 1, -\frac{9}{10}, -\frac{9}{10}, 1, -\frac{9}{10}, 1)\right) +
 \frac{1}{5}\mathcal{N}\left((-1,-1,-1,-1);\frac{1}{4}(1, -\frac{9}{10}, -\frac{9}{10}, -\frac{9}{10}, 1, -\frac{9}{10}, -\frac{9}{10}, 1, -\frac{9}{10}, 1)\right) +
 \sum_{k=1}^{8}\frac{9}{600}\mathcal{N}\left(((-1)^{2k}, (-1)^{i_k}, (-1)^{j_k}, (-1)^{l_k}); \frac{1}{2^{k+2}}(1, -\frac{9}{10}, -\frac{9}{10}, -\frac{9}{10}, 1, -\frac{9}{10}, -\frac{9}{10}, 1, -\frac{9}{10}, 1) \right) +
 \sum_{k=1}^{7}\frac{9}{600}\mathcal{N}\left(((-1)^{2k+1}, (-1)^{(2k+2)\text{div}2}, (-1)^{(2k+4)\text{div}4}, (-1)^{(2k+8)\text{div}8}); \frac{1}{2^{k+2}}\bm{I} \right)$ with \(\text{div}\) the integer division, \( i_k = (2k+1)\text{div}2\), \( j_k = (2k+3)\text{div}4\) and \(l_k = (2k+7)\text{div}8\)\\
\end{tabular} 
\caption{Definition of quadri-dimensional testing densities}\label{tab:PDFtest4D}
\end{table}

\newpage
\section{Tables of Monte Carlo mean of ISE value}\label{Appendix:ISE}

\begin{table}[h]
	\scriptsize
	\begin{center}

		\begin{tabular}[l]{@{}lccc|ccc|ccc|ccc}

				& \multicolumn{3}{c|}{UCV} & \multicolumn{3}{c|}{PI}	
				& \multicolumn{3}{c|}{SCV}  & \multicolumn{3}{c}{PCO}	\\
			$n$	& $10^2$	& $10^3$	& $10^4$ & $10^2$	& $10^3$	& $10^4$ 
				& $10^2$	& $10^3$	& $10^4$ & $10^2$	& $10^3$	& $10^4$  
				 \\

 UG & 0.111 & \textbf{0.049} & \textbf{0.023} & \textbf{0.097} & \textbf{0.047} & \textbf{0.023} & \textbf{0.093} & \textbf{0.047} & \textbf{0.023} & 0.109 & \textbf{0.049} & \textbf{0.023}\\
 CG & 0.142 & \textbf{0.070} & \textbf{0.033} & \textbf{0.138} & \textbf{0.068} & \textbf{0.033} & \textbf{0.134} & \textbf{0.068} & \textbf{0.033} & \textbf{0.141} & \textbf{0.071} & 0.035\\
 U & 0.234 & \textbf{0.150} & \textbf{0.102} & \textbf{0.219} & \textbf{0.155} & 0.113 & \textbf{0.220} & 0.159 & 0.115 & \textbf{0.225} & \textbf{0.150} & 0.110\\
 Sk+ & \textbf{0.355} & 0.487 & \textbf{0.095} & \textbf{0.347} & \textbf{0.199} & \textbf{0.096} & 0.367 & 0.216 & 0.104 & 0.384 & 0.234 & 0.158\\
 Sk & \textbf{0.108} & 0.056 & \textbf{0.024} & \textbf{0.105} & \textbf{0.053} & \textbf{0.024} & \textbf{0.108} & \textbf{0.053} & \textbf{0.024} & \textbf{0.109} & 0.056 & \textbf{0.025}\\
 D & \textbf{0.118} & 0.077 & 0.281 & \textbf{0.115} & \textbf{0.066} & \textbf{0.031} & \textbf{0.119} & 0.072 & 0.033 & \textbf{0.118} & \textbf{0.067} & \textbf{0.031}\\
 K & \textbf{0.101} & \textbf{0.049} & \textbf{0.024} & \textbf{0.097} & \textbf{0.051} & \textbf{0.025} & \textbf{0.099} & \textbf{0.051} & \textbf{0.025} & \textbf{0.099} & \textbf{0.049} & \textbf{0.025}\\
 Bi & 0.110 & \textbf{0.050} & \textbf{0.023} & \textbf{0.102} & \textbf{0.050} & \textbf{0.023} & \textbf{0.107} & \textbf{0.052} & \textbf{0.023} & \textbf{0.108} & \textbf{0.050} & \textbf{0.024}\\
 SBi & \textbf{0.120} & 0.061 & \textbf{0.027} & \textbf{0.116} & \textbf{0.057} & \textbf{0.027} & \textbf{0.120} & \textbf{0.058} & \textbf{0.027} & \textbf{0.119} & 0.061 & \textbf{0.027}\\
 ABi & \textbf{0.109} & \textbf{0.058} & \textbf{0.025} & \textbf{0.108} & \textbf{0.057} & \textbf{0.025} & \textbf{0.110} & \textbf{0.057} & \textbf{0.025} & \textbf{0.109} & \textbf{0.058} & \textbf{0.025}\\
 T & \textbf{0.099} & \textbf{0.050} & \textbf{0.024} & \textbf{0.097} & \textbf{0.048} & \textbf{0.024} & \textbf{0.102} & \textbf{0.049} & \textbf{0.024} & \textbf{0.099} & \textbf{0.050} & \textbf{0.025}\\
 F & \textbf{0.166} & \textbf{0.078} & \textbf{0.038} & \textbf{0.173} & 0.087 & \textbf{0.040} & 0.187 & 0.089 & \textbf{0.040} & \textbf{0.165} & \textbf{0.077} & 0.042\\
 DF & \textbf{0.121} & \textbf{0.063} & \textbf{0.037} & \textbf{0.120} & \textbf{0.063} & \textbf{0.037} & 0.130 & 0.066 & \textbf{0.038} & \textbf{0.120} & \textbf{0.063} & \textbf{0.036}\\
 AF & \textbf{0.179} & \textbf{0.103} & \textbf{0.053} & 0.190 & 0.125 & 0.067 & 0.202 & 0.132 & 0.070 & \textbf{0.179} & \textbf{0.108} & \textbf{0.054}\\
		\end{tabular} 
	\end{center}
	\caption{Monte Carlo mean of $ISE^{1/2}_{\rm meth}(f)$ over 20 trials for 4 methodologies described in Section~\ref{sec:mD} with diagonal bandwidth tested on the 14 benchmark 2-dimensional densities for different values of $n$. The Monte Carlo mean $\overline{ISE}^{1/2}_{\rm meth}(f)$ is in bold when it is not larger than $1.05 \times \min_{\rm meth}\overline{ISE}^{1/2}_{\rm meth}(f)$.}\label{tab:moy_ISE_D2_diag}
\end{table}

\begin{table}
	\scriptsize
	\begin{center}

		\begin{tabular}[l]{@{}lcc|cc|cc|cc}

				& \multicolumn{2}{c|}{UCV}	
				& \multicolumn{2}{c|}{PI}	& \multicolumn{2}{c|}{SCV}
				& \multicolumn{2}{c}{PCO}\\
			$n$	& $10^2$	& $10^3$	 & $10^2$	& $10^3$	 
				& $10^2$	& $10^3$	 & $10^2$	& $10^3$	  
				 \\

UG & 0.073 & \textbf{0.039} & 0.115 & \textbf{0.040} & \textbf{0.068} & \textbf{0.038} & \textbf{0.070} & \textbf{0.039}\\
 CG & 0.202 & \textbf{0.092} & 0.335 & \textbf{0.092} & \textbf{0.163} & 0.103 & \textbf{0.155} & \textbf{0.092}\\
 U & 0.250 & \textbf{0.180} & 0.316 & \textbf{0.177} & \textbf{0.234} & \textbf{0.185} & \textbf{0.235} & \textbf{0.179}\\
 Sk+ & \textbf{14.973} & \textbf{15.928} & \textbf{14.976} & \textbf{15.928} & \textbf{14.973} & \textbf{15.928} & \textbf{14.973} & \textbf{15.927}\\
 Sk & 0.107 & \textbf{0.053} & 0.114 & 0.081 & \textbf{0.093} & 0.100 & 0.098 & \textbf{0.053}\\
 D & \textbf{4.728} & \textbf{4.686} & \textbf{4.728} & \textbf{4.687} & \textbf{4.728} & \textbf{4.687} & \textbf{4.728} & \textbf{4.686}\\
 K & \textbf{5.498} & \textbf{5.392} & \textbf{5.496} & \textbf{5.391} & \textbf{5.495} & \textbf{5.391} & \textbf{5.496} & \textbf{5.391}\\
 Bi & 0.109 & \textbf{0.054} & 0.145 & \textbf{0.055} & \textbf{0.097} & 0.060 & \textbf{0.100} & \textbf{0.055}\\
 SBi & 0.132 & \textbf{0.071} & 0.162 & \textbf{0.070} & \textbf{0.120} & 0.077 & \textbf{0.124} & \textbf{0.072}\\
 ABi & 0.122 & \textbf{0.065} & 0.140 & \textbf{0.065} & \textbf{0.108} & 0.073 & \textbf{0.113} & \textbf{0.066}\\
 T & 0.101 & \textbf{0.052} & 0.120 & \textbf{0.051} & \textbf{0.086} & 0.056 & \textbf{0.090} & \textbf{0.052}\\
 F & \textbf{0.153} & \textbf{0.094} & \textbf{0.148} & 0.103 & 0.167 & 0.124 & \textbf{0.151} & \textbf{0.095}\\
 DF & 0.147 & \textbf{0.101} & 0.171 & \textbf{0.103} & \textbf{0.140} & 0.107 & \textbf{0.138} & \textbf{0.101}\\
 AF & \textbf{8.205} & \textbf{7.695} & \textbf{8.205} & \textbf{7.695} & \textbf{8.204} & \textbf{7.696} & \textbf{8.205} & \textbf{7.695}\\

		\end{tabular} 
		\end{center}
	\caption{Monte Carlo mean of $ISE^{1/2}_{\rm meth}(f)$ over 20 trials for 4 methodologies described in Section~\ref{sec:mD} with diagonal bandwidth tested on the 14 benchmark 3-dimensional densities for different values of $n$. The Monte Carlo mean $\overline{ISE}^{1/2}_{\rm meth}(f)$ is in bold when it is not larger than $1.05 \times \min_{\rm meth}\overline{ISE}^{1/2}_{\rm meth}(f)$.}\label{tab:moy_ISE_D3_diag}
\end{table}

\begin{table}
	\scriptsize
	\begin{center}

		\begin{tabular}[l]{@{}lcc|cc|cc|cc}

				& \multicolumn{2}{c|}{UCV}		
				& \multicolumn{2}{c|}{PI}	& \multicolumn{2}{c|}{SCV}
				& \multicolumn{2}{c}{PCO}\\
			$n$	& $10^2$	& $10^3$	 & $10^2$	& $10^3$	 
				& $10^2$	& $10^3$	 & $10^2$	& $10^3$	  
				 \\

 UG & 0.075 & \textbf{0.041} & 0.140 & 0.044 & \textbf{0.069} & \textbf{0.041} & \textbf{0.069} & \textbf{0.041}\\
 CG & \textbf{0.193} & \textbf{0.110} & 0.474 & \textbf{0.114} & \textbf{0.191} & 0.132 & \textbf{0.194} & 0.116\\
 U & \textbf{0.261} & \textbf{0.199} & 0.453 & \textbf{0.201} & \textbf{0.251} & \textbf{0.202} & 0.267 & \textbf{0.209}\\
 Sk+ & \textbf{20.373} & \textbf{25.881} & \textbf{20.375} & \textbf{25.883} & \textbf{20.374} & \textbf{25.884} & \textbf{20.373} & \textbf{25.883}\\
 Sk & 0.142 & 0.055 & 0.102 & 0.082 & \textbf{0.077} & 0.098 & \textbf{0.078} & \textbf{0.051}\\
 D & \textbf{2.503} & \textbf{2.473} & \textbf{2.504} & \textbf{2.472} & \textbf{2.503} & \textbf{2.472} & \textbf{2.503} & \textbf{2.472}\\
 K & \textbf{3.345} & \textbf{3.341} & \textbf{3.345} & \textbf{3.341} & \textbf{3.345} & \textbf{3.341} & \textbf{3.345} & \textbf{3.341}\\
 Bi & 0.096 & \textbf{0.055} & 0.179 & \textbf{0.056} & \textbf{0.088} & 0.061 & \textbf{0.086} & \textbf{0.055}\\
 SBi & 0.133 & \textbf{0.080} & 0.188 & \textbf{0.081} & \textbf{0.121} & 0.090 & \textbf{0.121} & \textbf{0.080}\\
 ABi & \textbf{0.113} & \textbf{0.069} & 0.157 & 0.079 & \textbf{0.109} & \textbf{0.069} & \textbf{0.109} & \textbf{0.069}\\
 T & \textbf{0.077} & \textbf{0.047} & 0.107 & 0.060 & \textbf{0.077} & \textbf{0.047} & \textbf{0.075} & \textbf{0.047}\\
 F & \textbf{0.138} & \textbf{0.094} & \textbf{0.135} & \textbf{0.095} & 0.142 & 0.111 & \textbf{0.134} & \textbf{0.095}\\
 DF & 0.234 & \textbf{0.199} & 0.259 & \textbf{0.207} & \textbf{0.221} & \textbf{0.200} & \textbf{0.218} & \textbf{0.199}\\
 AF & \textbf{17.498} & \textbf{15.178} & \textbf{17.497} & \textbf{15.182} & \textbf{17.497} & \textbf{15.183} & \textbf{17.497} & \textbf{15.182}\\

		\end{tabular} 
		\end{center}
	\caption{Monte Carlo mean of $ISE^{1/2}_{\rm meth}(f)$ over 20 trials for 4 methodologies described in Section~\ref{sec:mD} with diagonal bandwidth tested on the 14 benchmark 4-dimensional densities for different values of $n$. The Monte Carlo mean $\overline{ISE}^{1/2}_{\rm meth}(f)$ is in bold when it is not larger than $1.05 \times \min_{\rm meth}\overline{ISE}^{1/2}_{\rm meth}(f)$.}\label{tab:moy_ISE_D4_diag}
\end{table}

\begin{table}
	\tiny
	\begin{center}
		\begin{tabular}[l]{@{}lccc|ccc|ccc|ccc|ccc}

				& \multicolumn{3}{c|}{UCV}  	
				& \multicolumn{3}{c|}{RoT}	& \multicolumn{3}{c|}{PI}	
				& \multicolumn{3}{c|}{SCV} & \multicolumn{3}{c}{PCO}\\
			$n$	& $10^2$	& $10^3$	& $10^4$ & $10^2$	& $10^3$	& $10^4$ 
			    & $10^2$	& $10^3$	& $10^4$ 
				& $10^2$	& $10^3$	& $10^4$ & $10^2$	& $10^3$	& $10^4$  
				 \\

 UG & 0.124 & 0.055 & \textbf{0.023} & \textbf{0.093} & \textbf{0.047} & \textbf{0.022} & \textbf{0.097} & \textbf{0.047} & \textbf{0.023} & \textbf{0.094} & \textbf{0.047} & \textbf{0.023} & 0.110 & \textbf{0.049} & \textbf{0.023}\\
 CG & 0.123 & 0.052 & \textbf{0.025} & \textbf{0.098} & \textbf{0.048} & \textbf{0.024} & \textbf{0.102} & \textbf{0.049} & \textbf{0.024} & \textbf{0.100} & \textbf{0.048} & \textbf{0.024} & 0.114 & 0.051 & 0.026\\
 U & 0.249 & \textbf{0.152} & \textbf{0.102} & \textbf{0.219} & 0.161 & 0.128 & \textbf{0.220} & \textbf{0.155} & 0.113 & \textbf{0.221} & 0.159 & 0.115 & \textbf{0.226} & \textbf{0.149} & 0.110\\
 Sk+ & 0.297 & \textbf{0.153} & \textbf{0.068} & 0.325 & 0.226 & 0.133 & \textbf{0.272} & \textbf{0.151} & \textbf{0.069} & \textbf{0.285} & \textbf{0.154} & \textbf{0.069} & 0.325 & \textbf{0.157} & 0.088\\
 Sk & 0.125 & \textbf{0.057} & \textbf{0.025} & 0.253 & 0.223 & 0.182 & 0.196 & 0.094 & 0.032 & 0.210 & 0.104 & 0.037 & \textbf{0.114} & \textbf{0.055} & \textbf{0.025}\\
 D & 0.115 & 0.056 & \textbf{0.024} & 0.104 & 0.067 & 0.037 & \textbf{0.098} & \textbf{0.053} & \textbf{0.024} & \textbf{0.103} & \textbf{0.054} & \textbf{0.024} & \textbf{0.103} & \textbf{0.052} & \textbf{0.024}\\
 K & 0.110 & \textbf{0.048} & \textbf{0.022} & 0.112 & 0.078 & 0.050 & \textbf{0.100} & 0.051 & 0.024 & \textbf{0.103} & 0.052 & 0.024 & \textbf{0.098} & \textbf{0.048} & 0.024\\
 Bi & 0.116 & \textbf{0.051} & \textbf{0.022} & \textbf{0.104} & 0.057 & 0.028 & \textbf{0.100} & \textbf{0.048} & \textbf{0.022} & \textbf{0.105} & \textbf{0.049} & \textbf{0.022} & 0.107 & 0.051 & 0.024\\
 SBi & 0.129 & 0.059 & \textbf{0.025} & \textbf{0.119} & 0.065 & 0.035 & \textbf{0.115} & \textbf{0.054} & \textbf{0.025} & \textbf{0.119} & \textbf{0.054} & \textbf{0.025} & 0.123 & \textbf{0.055} & \textbf{0.025}\\
 ABi & 0.116 & \textbf{0.055} & \textbf{0.023} & 0.160 & 0.109 & 0.064 & 0.123 & 0.059 & \textbf{0.024} & 0.125 & 0.058 & \textbf{0.024} & \textbf{0.103} & \textbf{0.053} & \textbf{0.023}\\
 T & 0.108 & 0.049 & \textbf{0.023} & 0.102 & 0.059 & 0.032 & \textbf{0.094} & \textbf{0.045} & \textbf{0.023} & \textbf{0.099} & \textbf{0.046} & \textbf{0.023} & 0.103 & 0.049 & \textbf{0.023}\\
 F & \textbf{0.172} & \textbf{0.078} & \textbf{0.039} & 0.187 & 0.132 & 0.084 & 0.174 & 0.087 & \textbf{0.040} & 0.187 & 0.089 & \textbf{0.040} & \textbf{0.165} & \textbf{0.076} & 0.042\\
 DF & \textbf{0.117} & \textbf{0.060} & \textbf{0.035} & 0.146 & 0.096 & 0.059 & \textbf{0.115} & \textbf{0.060} & \textbf{0.036} & 0.121 & \textbf{0.061} & \textbf{0.036} & \textbf{0.118} & \textbf{0.062} & \textbf{0.036}\\
 AF & \textbf{0.164} & \textbf{0.086} & \textbf{0.042} & 0.202 & 0.164 & 0.126 & 0.190 & 0.117 & 0.057 & 0.202 & 0.122 & 0.059 & \textbf{0.167} & 0.095 & \textbf{0.043}\\

		\end{tabular} 
		\end{center}
	\caption{Monte Carlo mean of $ISE^{1/2}_{\rm meth}(f)$ over 20 trials for 5 methodologies described in Section~\ref{sec:mD} with non-diagonal bandwidth tested on the 14 benchmark 2-dimensional densities for different values of $n$. The Monte Carlo mean $\overline{ISE}^{1/2}_{\rm meth}(f)$ is in bold when it is not larger than $1.05 \times \min_{\rm meth}\overline{ISE}^{1/2}_{\rm meth}(f)$.}\label{tab:moy_ISE_D2_full_H4}
\end{table}

\begin{table}
	\scriptsize
	\begin{center}
		\begin{tabular}[l]{@{}lcc|cc|cc|cc|cc}

				& \multicolumn{2}{c|}{UCV}  	
				& \multicolumn{2}{c|}{RoT}	& \multicolumn{2}{c|}{PI}	
				& \multicolumn{2}{c|}{SCV} & \multicolumn{2}{c}{PCO}\\
			$n$	& $10^2$	& $10^3$	& $10^2$	& $10^3$	 
			    & $10^2$	& $10^3$	& $10^2$	& $10^3$
			    & $10^2$	& $10^3$	  
				 \\

 UG & 0.112 & 0.042 & \textbf{0.069} & \textbf{0.038} & 0.115 & 0.053 & \textbf{0.069} & \textbf{0.038} & \textbf{0.071} & \textbf{0.039}\\
 CG & 0.181 & 0.078 & \textbf{0.114} & \textbf{0.066} & 0.199 & 0.092 & \textbf{0.114} & \textbf{0.066} & 0.122 & \textbf{0.069}\\
 U & 0.286 & \textbf{0.184} & \textbf{0.235} & \textbf{0.180} & 0.315 & \textbf{0.187} & \textbf{0.235} & \textbf{0.180} & \textbf{0.235} & \textbf{0.178}\\
 Sk+ & \textbf{13.882} & \textbf{14.098} & 14.972 & 15.927 & 14.971 & 15.926 & 14.971 & 15.926 & 14.973 & 15.927\\
 Sk & 0.137 & 0.087 & 0.190 & 0.177 & 0.154 & 0.099 & 0.164 & 0.102 & \textbf{0.097} & \textbf{0.054}\\
 D & \textbf{4.179} & \textbf{3.541} & 4.728 & 4.687 & 4.727 & 4.685 & 4.728 & 4.686 & 4.727 & 4.685\\
 K & \textbf{4.862} & \textbf{4.109} & 5.495 & 5.391 & 5.496 & 5.391 & 5.495 & 5.391 & 5.496 & 5.390\\
 Bi & 0.131 & \textbf{0.054} & \textbf{0.096} & 0.059 & 0.131 & 0.062 & \textbf{0.094} & \textbf{0.052} & \textbf{0.098} & \textbf{0.053}\\
 SBi & 0.162 & \textbf{0.067} & \textbf{0.124} & 0.083 & 0.142 & 0.071 & \textbf{0.118} & \textbf{0.066} & \textbf{0.122} & 0.070\\
 ABi & 0.136 & \textbf{0.064} & 0.149 & 0.114 & 0.128 & 0.071 & 0.119 & 0.067 & \textbf{0.107} & \textbf{0.062}\\
 T & 0.111 & \textbf{0.050} & 0.092 & 0.062 & 0.102 & 0.052 & \textbf{0.086} & \textbf{0.050} & \textbf{0.087} & \textbf{0.049}\\
 F & 0.163 & \textbf{0.095} & 0.171 & 0.142 & \textbf{0.149} & \textbf{0.095} & 0.167 & 0.110 & \textbf{0.152} & \textbf{0.096}\\
 DF & 0.146 & \textbf{0.099} & 0.153 & 0.121 & 0.158 & \textbf{0.103} & \textbf{0.137} & \textbf{0.099} & \textbf{0.138} & \textbf{0.100}\\
 AF & \textbf{7.912} & \textbf{6.563} & \textbf{8.204} & 7.696 & \textbf{8.205} & 7.695 & \textbf{8.204} & 7.695 & \textbf{8.204} & 7.694\\

		\end{tabular} 
		\end{center}
	\caption{Monte Carlo mean of $ISE^{1/2}_{\rm meth}(f)$ over 20 trials for 5 methodologies described in Section~\ref{sec:mD} with non-diagonal bandwidth tested on the 14 benchmark 3-dimensional densities for different values of $n$. The Monte Carlo mean $\overline{ISE}^{1/2}_{\rm meth}(f)$ is in bold when it is not larger than $1.05 \times \min_{\rm meth}\overline{ISE}^{1/2}_{\rm meth}(f)$.}\label{tab:moy_ISE_D3_full_H4}
\end{table}

\begin{table}
	\scriptsize
	\begin{center}
		\begin{tabular}[l]{@{}lcc|cc|cc|cc|cc}

				& \multicolumn{2}{c|}{UCV}  	
				& \multicolumn{2}{c|}{RoT}	& \multicolumn{2}{c|}{PI}	
				& \multicolumn{2}{c|}{SCV} & \multicolumn{2}{c}{PCO} \\
			$n$	& $10^2$	& $10^3$	 & $10^2$	& $10^3$	
			    & $10^2$	& $10^3$	 
				& $10^2$	& $10^3$	 & $10^2$	& $10^3$	 
				 \\

 UG & 0.128 & 0.046 & \textbf{0.070} & \textbf{0.041} & 0.144 & 0.068 & \textbf{0.070} & \textbf{0.041} & \textbf{0.070} & \textbf{0.041}\\
 CG & 0.271 & 0.089 & \textbf{0.146} & \textbf{0.082} & 0.284 & 0.136 & \textbf{0.145} & \textbf{0.082} & 0.186 & 0.098\\
 U & 0.383 & \textbf{0.205} & \textbf{0.257} & \textbf{0.197} & 0.449 & 0.246 & \textbf{0.253} & \textbf{0.198} & 0.272 & 0.212\\
 Sk+ & 14.264 & 15.316 & 14.187 & 16.195 & \textbf{12.486} & \textbf{10.438} & \textbf{12.600} & 11.517 & 20.373 & 25.883\\
 Sk & 0.141 & 0.056 & 0.139 & 0.132 & 0.126 & 0.098 & 0.129 & 0.098 & \textbf{0.078} & \textbf{0.051}\\
 D & \textbf{2.338} & \textbf{1.904} & 2.503 & 2.472 & 2.503 & 2.471 & 2.503 & 2.472 & 2.503 & 2.472\\
 K & \textbf{3.332} & \textbf{2.780} & \textbf{3.345} & 3.341 & \textbf{3.345} & 3.341 & \textbf{3.345} & 3.341 & \textbf{3.344} & 3.340\\
 Bi & 0.148 & 0.054 & \textbf{0.085} & 0.056 & 0.154 & 0.074 & \textbf{0.084} & \textbf{0.052} & \textbf{0.084} & \textbf{0.053}\\
 SBi & 0.184 & \textbf{0.075} & 0.122 & 0.092 & 0.158 & 0.083 & \textbf{0.115} & \textbf{0.075} & \textbf{0.121} & \textbf{0.079}\\
 ABi & 0.151 & \textbf{0.065} & 0.137 & 0.113 & 0.136 & 0.081 & 0.119 & 0.075 & \textbf{0.104} & \textbf{0.063}\\
 T & 0.104 & 0.049 & 0.077 & 0.055 & 0.097 & 0.052 & \textbf{0.073} & \textbf{0.046} & \textbf{0.072} & \textbf{0.045}\\
 F & 0.162 & \textbf{0.095} & 0.142 & 0.128 & \textbf{0.135} & \textbf{0.095} & \textbf{0.141} & 0.111 & \textbf{0.134} & \textbf{0.096}\\
 DF & 0.253 & \textbf{0.198} & \textbf{0.223} & \textbf{0.207} & 0.243 & \textbf{0.203} & \textbf{0.218} & \textbf{0.199} & \textbf{0.217} & \textbf{0.199}\\
 AF & \textbf{17.674} & \textbf{15.386} & \textbf{17.497} & \textbf{15.184} & \textbf{17.497} & \textbf{15.182} & \textbf{17.497} & \textbf{15.183} & \textbf{17.497} & \textbf{15.182}\\
	
		\end{tabular} 
		\end{center}
	\caption{Monte Carlo mean of $ISE^{1/2}_{\rm meth}(f)$ over 20 trials for 5 methodologies described in Section~\ref{sec:mD} with non-diagonal bandwidth tested on the 14 benchmark 4-dimensional densities for different values of $n$. The Monte Carlo mean $\overline{ISE}^{1/2}_{\rm meth}(f)$ is in bold when it is not larger than $1.05 \times \min_{\rm meth}\overline{ISE}^{1/2}_{\rm meth}(f)$.}\label{tab:moy_ISE_D4_full_H4}
\end{table}



\end{document}